\DeclareMathOperator{\ev}{ev}
\DeclareMathOperator{\cosk}{Cosk}
\DeclareMathOperator{\sk}{Sk}
\DeclareMathOperator{\sh}{\textup{\textbf{Sh}}}
\DeclareMathOperator{\skn}{\sk^n}
\DeclareMathOperator{\tr}{Tr}
\DeclareMathOperator{\trn}{\tr^n}
\def\op{^\textup{\scriptsize{op}}}
\def\elem[#1]{\ulcorner#1\urcorner}
\def\prsh[#1]{\con^{#1^{\textup{\tiny{op}}}}}
\def\tz[#1,#2]{\begin{tikzpicture}[thick,scale=#1]
{#2}
\end{tikzpicture}}
\def\qtri[#1,#2,#3,#4,#5,#6,#7]{
\tz[#1,
\node (1) at (0,.6) {$#2$};
\node (2) at (2,.6) {$#3$};
\node (3) at (2,-.6) {$#4$};
\node at (.8,-.2) {$_{#7}$};
{\draw (1) edge [->] node[above] {$_{#5}$} (2);
\draw (1) edge [->] (3);
\draw (2) edge [->] node[right] {$_{#6}$} (3);}
]
}
\newtheorem{thm}{Theorem}[section]
\def\bt{\begin{thm}}
\def\et{\end{thm}}
\newtheorem{lem}[thm]{Lemma}
\def\bl{\begin{lem}}
\def\el{\end{lem}}
\newtheorem{pro}[thm]{Proposition}
\def\bp{\begin{pro}}
\def\ep{\end{pro}}
\newtheorem{rem}[thm]{Remark}
\def\br{\begin{rem}}
\def\er{\end{rem}}
\newtheorem{de}[thm]{Definition}
\def\bd{\begin{de}}
\def\ed{\end{de}}
\def\pts{\textup{\,:\,}}
\def\unito[#1,#2,#3,#4,#5,#6,#7,#8]{\tz[#1,
\node (1) at (0,.6) {$#2$};
\node (2) at (3,.6) {$#3$};
\node (3) at (1.5,-.6) {$#4$};
\node (a) at (1.6,.6) {};
\node (aa) at (1.6,-.3) {};
{\draw (1) edge[->] node[above]{$_{#5}$} (2); }
{\draw (1) edge[->] node[below left]{$_{#6}$} (3); }
{\draw (3) edge[->] node[below right]{$_{#7}$} (2); }
{\draw (a) edge[->,double,thin] node[left]{$_{#8}$} (aa); }]}
\def\counito[#1,#2,#3,#4,#5,#6,#7,#8]{\tz[#1,
\node (1) at (0,-.6) {$#2$};
\node (2) at (3,-.6) {$#3$};
\node (3) at (1.5,.6) {$#4$};
\node (a) at (1.6,.3) {};
\node (aa) at (1.6,-.6) {};
{\draw (1) edge[->] node[below]{$_{#5}$} (2); }
{\draw (1) edge[->] node[above left]{$_{#6}$} (3); }
{\draw (3) edge[->] node[above right]{$_{#7}$} (2); }
{\draw (a) edge[->,double,thin] node[left]{$_{#8}$} (aa); }]}
\def\con{\textup{\textbf{Set}}}
\def\cate{\textup{\textbf{Cat}}}
\def\bflp{\mathbf{L}_p}
\def\bfc{\mathbf{C}}
\def\bfh{\mathbf{H}}
\newcommand{\Set}{\mathbf{Set}}
\newcommand{\Sets}{\ensuremath{\Set}}
\newcommand{\Kel}{\mathbf{Kel}}
\newcommand{\cat}[1]{\mathcal{#1}} 
\newcommand{\escat}[1]{\cat{#1}}
\newcommand{\Psh}[1]{\widehat{#1}}
\newcommand{\Shv}{\mathbf{Sh}}
\newcommand{\calA}{\ensuremath{\escat{A}}} 
\newcommand{\calC}{\ensuremath{\escat{C}}} 
\newcommand{\calE}{\ensuremath{\escat{E}}} 
\newcommand{\calF}{\ensuremath{\escat{F}}} 
\newcommand{\calH}{\ensuremath{\escat{H}}} 
\newcommand{\calJ}{\ensuremath{\escat{J}}} 
\newcommand{\calL}{\ensuremath{\escat{L}}} 
\newcommand{\calS}{\ensuremath{\escat{S}}} 
\newcommand{\calV}{\ensuremath{\escat{V}}} 
\newcommand{\calW}{\ensuremath{\escat{W}}} 
\newcommand{\calX}{\ensuremath{\escat{X}}} 
\newcommand{\calY}{\ensuremath{\escat{Y}}} 
\newcommand{\oalpha}{\overline{\alpha}} 
\newcommand{\obeta}{\overline{\beta}} 
\newcommand{\osigma}{\overline{\sigma}}
\newcommand{\otau}{\overline{\tau}} 
\newcommand{\otheta}{\overline{\theta}}
\newcommand{\okappa}{\overline{\kappa}}
\newcommand{\opCat}[1]{\ensuremath{{#1}^{\mathrm{op}}}}
\newcommand{\Nat}{\ensuremath{\mathbb{N}}}
\newcommand{\twopl}[2]{\ensuremath{\left\langle #1, #2 \right\rangle}}
\newcommand{\interior}{\textnormal{int}}
\newcommand{\enrichedIn}[1]{{#1}\textnormal{-}\mathbf{Cat}}
\newcommand{\wk}[1]{\mathtt{k}{#1}}  
\theoremstyle{plain}
\newtheorem{theorem}{Theorem}[section]
\newtheorem{corollary}[theorem]{Corollary}
\newtheorem{proposition}[theorem]{Proposition}
\newtheorem{lemma}[theorem]{Lemma}
\theoremstyle{definition}
\newtheorem{definition}[theorem]{Definition}
\newtheorem{example}[theorem]{Example}
\title{On the relation between continuous and combinatorial}
\author{F. Marmolejo and M. Menni}
\begin{document}
\maketitle

\begin{abstract}
Axiomatic Cohesion proposes that the contrast between cohesion and non-cohesion may be expressed by means of a geometric morphism ${p : \calE \rightarrow \calS}$ (between toposes) with certain special properties that allow to effectively use the intuition that the objects
of $\calE$ are `spaces' and those of $\calS$ are `sets'. Such geometric morphisms are called {\em (pre-)\linebreak[4]cohesive}. We may also say that $\calE$ is pre-cohesive (over $\calS$).
In this case, the topos $\calE$ determines an $\calS$-enriched `homotopy' category. The purpose of the present paper is to study certain aspects of this  homotopy theory.
We introduce {\em weakly Kan} objects in a pre-cohesive topos, which are analogous to Kan complexes in the topos of simplicial sets.
Also,  given a geometric morphism ${g:\calF \rightarrow \calE}$ between pre-cohesive toposes $\calF$ and $\calE$ (over the same base), we define what it means for $g$ to {\em preserve pieces}. We prove that if $g$ preserves pieces then  it induces an adjunction between the homotopy categories determined by $\calF$ and $\calE$, and that the direct image ${g_*:\calF \rightarrow \calE}$ preserves weakly Kan objects. These and other results support the intuition that the inverse image of $g$ is `geometric realization'. In particular, since Kan complexes are weakly Kan in the pre-cohesive topos of simplicial sets,  the result relating $g$ and weakly Kan objects is analogous to the fact that the singular complex of a space is a Kan complex.
\end{abstract}

\tableofcontents

\section{Introduction}

   Johnstone explains in his 1979 paper \cite{Johnstone79} that one of its aims is to show that there are `convenient categories of spaces' that are toposes. To achieve this aim he constructs the {\em topological topos} (denoted here by $\calJ$) and proves that $\calJ$ embeds the category of sequential spaces. Moreover, he presents the geometric realization functor as the inverse image of a geometric morphism ${\calJ \rightarrow \Psh{\Delta}}$ to the topos of simplicial sets.

   According to  \cite{Johnstone79}, the idea that the realization/singular adjunction resembles a geometric morphism was given a precise form by Joyal who observed that, if we consider the interval ${[0, 1]}$ in $\Set$ as a model of the theory classified by $\Psh{\Delta}$, then we obtain a geometric morphism  ${\Set \rightarrow \Psh{\Delta}}$ whose inverse image produces the underlying set of the usual geometric realization. The desire to `topologize' this observation is mentioned loc.~cit.\ as part of the interest in the topos $\calJ$.
   
   As a related example we can mention Proposition~{10.6} in \cite{Menni2014a}. There, the ingredients are a cohesive topos  ${f:\calF \rightarrow \Set}$  such that ${\calF}$ embeds the monoid of piecewise-linear endos on the interval ${[0, 1]}$, and a geometric morphism ${g:\calF \rightarrow \Psh{\Delta}}$ whose inverse image ${g^*:\Psh{\Delta} \rightarrow \calF}$ sends the total order with two elements ${[1] \in \Delta}$ to the interval ${[0, 1]}$ in $\calF$. It follows that the composite product-preserving left adjoint ${f_* g^* : \Psh{\Delta} \rightarrow \Set}$ sends ${[1] \in \Delta}$ to ${[0, 1]}$ in $\Set$. In other words, the set of points of ${g^* X}$ coincides with the underlying set of the classical geometric realization of the simplicial set $X$.

   From a more general perspective \cite{Lawvere86, Lawvere07}, the geometric morphisms ${\calJ \rightarrow \Psh{\Delta}}$ and ${\calF \rightarrow \Psh{\Delta}}$ are just two examples of  transformations between toposes of spaces. Indeed,  transformations from a topos of `topological' or `piecewise linear' spaces to one of `combinatorial' spaces. 
   
   The appearance of the topos $\Psh{\Delta}$ in both examples is circumstantial. (In fact, the piecewise linear example was devised so as to be directly related to Johnstone's.) In general, we may expect other toposes and, in fact, Lawvere has proposed in \cite{LawvereMail30sep2011} a concrete guide to organize geometric morphisms whose inverse images are `geometric realizations'. The idea is to organize them in terms of their codomains, which are toposes, and so ``some light is shed on their particularity by determining what kind of structure they classify [...]. Concretely, there are many different theories of algebraic structure for which the unit interval is a model, and having chosen one, this structure should be preserved by geometric realization".

  The purpose of the present paper is to axiomatize the concept of a realization/singular adjunction in the context of Axiomatic Cohesion as formulated in \cite{Lawvere07} and pursued in \cite{Johnstone2011, MenniExercise, Menni2014a, LawvereMenni2015}. This will lead us to the definition of a pieces-preserving geometric morphism. In order to argue that the definition is sensible we revisit the classical material as exposed in \cite{GabrielZisman} and examine it from the perspective suggested by Section~{III} in \cite{Lawvere07} and the unpublished \cite{LawvereMail26oct2011, LawvereMail30sep2011}. Indeed, let us quote from \cite{LawvereMail26oct2011} (with some very minor edition):
\begin{quotation}
According to the paradigm set by Milnor, the relation between continuous and combinatorial is a pair of adjoint functors called traditionally singular and realization. (``Singular", as emphasized by Eilenberg, means that the figures on which the combinatorial structure of a space lives should not be required to be monomorphisms,  in order that that structure should be functorial wrt all continuous changes of space; ``realization" refers to a process  analogous  to the passage from blueprints to actual buildings of beton and steel). As emphasized by Gabriel and Zisman, the exactness of realization forces us to refine the default notion of space  itself, in the direction proposed by Hurewicz in the late 40s and well-described by J.~L.~Kelley in 1955. Further refinements suggest that the notion of continuous could well be taken as a topos, of a cohesive (or gros) kind. The exactness of realization is an example of the striving to make the surrogate combinatorial topos (= having a site with finite homs ???)
describe the continuous category as closely as possible. For example the finite products of
combinatorial intervals might be required to admit the diagonal maps that their realizations have. There is one point however where perfect agreement cannot be achieved (Is this a theorem?): the contrast
between continuous and combinatorial forced Whitehead to introduce a specific notion he called weak
equivalence, as explained by Gabriel-Zisman, in order to extract the correct homotopy category. The
contrast can readily be read off of my list of axioms for Cohesion (TAC): the reasonable combinatorial
toposes satisfy all but one of the axioms, but only the continuous examples satisfy it. That Continuity axiom (preservation of infinite products by pizero) was introduced in order to obtain  homotopy types that are ``qualities" in an intuitive sense (as they should be automatically in the continuous case). 
\end{quotation}

   Let us emphasize some of the key guiding ideas:
\begin{enumerate}
\item Axiomatic Cohesion as a general theory of `categories of spaces' emphasizing the ubiquity of toposes of spaces, capable of distinguishing `combinatorial' and continuous examples,  and containing an intrinsic homotopy theory.
\item Realization/singular as a geometric morphism from a continuous topos of spaces to a combinatorial one.
\item Realization as striving to make the surrogate combinatorial topos describe the continuous one as closely as possible.
\item Agreement may not be perfect, as manifested in the classical case of simplicial sets by the need to consider weak equivalences. It is relevant to mention here also the role of Kan complexes.
\item The contrast between continuous and combinatorial can be read off the definition of cohesive topos via the {\em Continuity} axiom. If Continuity holds then the associated homotopy types are ``qualities".
\end{enumerate}

  We have said a few words already about geometric realization as the inverse image of a geometric morphism; so let us concentrate on the last two items. Every pre-cohesive topos $\calE$ over another topos $\calS$ has an associated `homotopy' or {\em Hurewicz} ($\calS$-enriched) category ${\bfh\calE}$. The details will be given later. For the moment it suffices to say that failure of Continuity affects the relation between ${\bfh\calE}$ and $\calS$. Moreover, in the particular case of $\Psh{\Delta}$, the failure of Continuity explains, to certain extent, the relevance of Kan complexes.
  
  In Section~\ref{SecWK} we introduce the notion of weakly Kan object (in a pre-cohesive topos $\calE$). It follows easily from the definition that Continuity holds for $\calE$ if and only if every object in $\calE$ is weakly Kan. So, in a rough sense, the `size' of the subcategory of weakly Kan objects is a measure of the validity of Continuity in $\calE$.
  
  In Section~\ref{SecWKchar} we give a characterization of weakly Kan objects. It follows from this characterization that Kan complexes are weakly Kan objects in the topos of simplicial sets.
It is for this reason that we said above that the failure of Continuity explains, to some extent, the relevance of Kan complexes.

     In Section~\ref{SecPiecePreservingAdjunctions} we define what it means for a pre-cohesive geometric morphism to preserve pieces. In Section~\ref{SecExamplesOfPpGm} we discuss some examples. In particular, we show that some of the typical examples are surjective, as in the case of the geometric morphism ${\calJ \rightarrow \Psh{\Delta}}$ from the topological topos; but notice that we give a very different proof than that in \cite{GabrielZisman}. Our proof uses a new simple sufficient condition, established in Section~\ref{SecSurjective},  for filtering functors to induce surjections. 
     
     In Section~\ref{SecPPandWK} we show that if  ${g:\calF \rightarrow \calE}$ (over the same base) preserves pieces then the direct image ${g_*:\calF \rightarrow \calE}$ preserves weakly Kan objects. This is analogous to the fact that the singular complex of a space is a Kan complex.
     
     In Section~\ref{SecHomotopy} we discuss in more detail the passage to homotopy. We explain in some detail the construction of the Hurewicz category associated to a pre-cohesive topos and prove that every pieces-preserving ${g:\calF \rightarrow \calE}$ (over $\calS$) induces an $\calS$-enriched adjunction between the Hurewicz categories ${\bfh\calF}$ and ${\bfh\calE}$.

   So, in a sense, this is a paper about the foundations of Homotopy Theory. In this sense then, some readers may find surprising the lack of references to Quillen's work \cite{QuillenHA} or other related approaches  such as \cite{KanEtAlBook}. The reason is that the homotopy category associated to a pre-cohesive topos is not constructed by inverting some class of arrows. Needless to say, it is expected that at some point a comparison between the approaches will be done.

   We will recall some of the main definitions and examples in Section~\ref{SecPreCohesive} but the reader will be assumed to be familiar with \cite{Lawvere07} and \cite{Menni2014a}. Further details and examples may be found in \cite{Johnstone2011, MenniExercise}.

\section{Pre-cohesive toposes}
\label{SecPreCohesive}

   Let $\calE$ and $\calS$ be toposes.
   A geometric morphism ${p:\calE \rightarrow \calS}$  is called {\em pre-cohesive} if the adjunction ${p^* \dashv p_*}$ extends to a string of adjoint functors ${p_! \dashv p^* \dashv p_* \dashv p^!}$ such that ${p^* , p^!:\calS\rightarrow \calE}$ are fully faithful, the canonical natural transformation ${\theta:p_* \rightarrow p_!}$ is epi ({\em Nullstellensatz}) and the leftmost adjoint ${p_!:\calE \rightarrow \calS}$ preserves finite products. 
   
   The conditions defining pre-cohesive morphisms allow to effectively use the intuition that the objects of $\calE$ are `spaces', those of $\calS$ are `sets', that ${p_* X}$ is the set of `points' of the space $X$, and that ${p_! X}$ is the set of `pieces' or `connected components'.
  So, for example, a space $X$ in $\calE$ is said to be {\em connected} if ${p_! X = 1}$.
   
  Intuition should be taken seriously. In particular, notice that any equivalence ${p:\calE \rightarrow \calS}$ is pre-cohesive. Roughly speaking, the definition of pre-cohesive geometric morphism allows examples where the concepts of `point' and `piece' coincide.
   More precisely, a pre-cohesive ${p:\calE\rightarrow \calS}$ is called a {\em quality type} if the canonical ${\theta:p_* \rightarrow p_!}$ is an iso. On the other hand, a pre-cohesive ${p:\calE \rightarrow \calS}$ is called {\em sufficiently cohesive} if ${p_! \Omega = 1}$ (that is, the subobject classifier of $\calE$ is connected). Intuitively, sufficiently cohesive examples are those where points and pieces are different concepts. This can be made precise as in Proposition~3 of  \cite{Lawvere07}: if the pre-cohesive ${p:\calE \rightarrow \calS}$ is both sufficiently cohesive and a quality type then $\calS$ is inconsistent.
   In the case of presheaf toposes, this contrast may be strengthened to a dichotomy. We explain this in more detail in the next paragraph.
   
    Let $\calC$ be a small category whose idempotents split and let ${p:\Psh{\calC}\rightarrow \Set}$  be the associated presheaf topos over $\Set$. With different terminology, it is proved in \cite{Johnstone2011} that $p$ is pre-cohesive if and only if $\calC$ has terminal object and every object of $\calC$ has a point (i.e.\ a map from the terminal to that object). Corollary~{2.11} in \cite{MenniExercise} shows that in this case, $p$ is sufficiently cohesive if and only if some object of $\calC$ has two distinct points. Proposition~{4.5} in \cite{Menni2014a} shows that the pre-cohesive $p$ is a quality type if and only if the terminal object of $\calC$ is also initial. Summarizing, a pre-cohesive presheaf topos is either sufficiently cohesive or a quality type. We now recall some examples.
    
    The simplest example of a small category with terminal object and two distinct points is ${\Delta_1}$. It follows that the pre-cohesive topos ${\Psh{\Delta_1}}$ of reflexive graphs is sufficiently cohesive \cite{Lawvere86}. Similarly for simplicial sets: the pre-cohesive ${\Psh{\Delta} \rightarrow \Sets}$ is also sufficiently cohesive.

\begin{example}[The classifier of striclty bipointed objects]\label{ExStrictlyBipointed}
    The theory may be presented by two constants $0$, $1$ and the sequent
$0=1\vdash \bot$. The corresponding classifier may be described as $\con^\calA$ where $\calA$ is the category
of strictly bipointed finite sets and functions between them that preserve the distinguished points. Notice that $\calA$
is the category of free `bipointed sets' generated by a finite set. Let $I$ in $\calA$ be the free bipointed set on one generator.
The standard theory of classifying toposes implies that $\calA(I,-)$ is the generic strictly bipointed set. (One can prove this
directly or use Proposition~{D3.1.10} in \cite{elephant}.) The characterization above implies that ${\Set^{\calA} = \Psh{\opCat{\calA}} \rightarrow \Set}$ is  pre-cohesive and sufficiently cohesive. Notice that ${\opCat{\calA}}$ has finite products and every object is a finite power of $I$.
\end{example}

   For illustration we will mainly use $\Psh{\Delta}$ and ${\con^\calA}$ but we stress that, as already suggested by the quotation in the introduction, any other topos classifying a theory for which the unit interval is a model may be used for similar purposes.
      Further examples may found in \cite{Lawvere07, Johnstone2011, MenniExercise} and, in particular, Examples~{4.2} and {4.3} in \cite{Menni2014a}.

  We will frequently use the following comparison transformation induced by a finite product preserving functor ${F:\calV \rightarrow \calW}$, between cartesian closed categories $\calV$ and $\calW$.
   For every $U$ and $V$ in $\calV$, the exponential transpose of 
$$\xymatrix{
F(V^U) \times F U \ar[r]^-{\cong} & F(V^U \times U) \ar[r]^-{F \ev} & F V
}$$ 
will be denoted by ${\kappa_{U,V}=:F(V^U) \rightarrow (F V)^{(F U)}}$ or by $\kappa^F$ if we want to
make explicit the functor to which it is applied. 
   Observe that for another finite-product preserving functor ${G:\calW \rightarrow \calX}$, the diagram
$$\xymatrix{
G(F(V^U)) \ar[rrd]_-{\kappa^{G F}_{U, V}}\ar[rr]^-{G(\kappa^F_{U, V})} && G((F V)^{(F U)}) \ar[d]^-{\kappa^G_{F U, F V}} \\
 && (G (F V))^{(G(F U))}
}$$ 
commutes.
   
Let ${p:\calE \rightarrow \calS}$ be a pre-cohesive geometric morphism. Denote the unit and counit of ${p_! \dashv p^*}$ by ${\sigma}$ and $\tau$ respectively. Intuitively, the unit ${\sigma_X:X \rightarrow p^*(p_! X)}$ sends each figure of $X$ to the piece where it lies. On the other hand, ${\tau_X:p_! (p^* A) \rightarrow A}$ is an iso; another expression of the requirement that $p^*$ embeds $\calS$ into $\calE$ as discrete spaces.
   
Since ${p_!:\calE\rightarrow \calS}$ preserves finite products we obtain a natural 
${\kappa_{X, Y}:p_!(Y^X) \rightarrow (p_! Y)^{p_! X}}$. 
%
%
%
%
   The following simple fact will play a relevant role.

\begin{lemma}\label{LemDefWeaklyKanNew} For any $X$ in $\calE$ and $A$ in $\calS$ the following composite
$$\xymatrix{
p_! (X^{p^* A}) \ar[r]^-{\kappa} & (p_! X)^{p_!(p^* A)} \ar[rr]^-{(p_! X)^{\tau^{-1}}} && (p_! X)^A
}$$
is an iso if and only if ${\kappa = \kappa^{p_!}_{p^* A, X}:p_! (X^{p^* A}) \rightarrow (p_! X)^{p_!(p^* A)}}$ is an iso.
\end{lemma}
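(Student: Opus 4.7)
The plan is to observe that the composite in the statement is simply $\kappa$ post-composed with a map that is manifestly an isomorphism, so the equivalence reduces to the standard fact that an arrow is an iso if and only if its composite with an iso is.

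The key input is that full faithfulness of $p^*$, which is part of the definition of a pre-cohesive geometric morphism, is equivalent to invertibility of the counit $\tau : p_! p^* \to \mathrm{id}$ of the adjunction $p_! \dashv p^*$. This is in fact stated explicitly in the paragraph immediately preceding the lemma, where it is remarked that $\tau_A : p_!(p^*A) \to A$ is an iso. Hence $\tau_A^{-1} : A \to p_!(p^*A)$ makes sense and is itself an isomorphism.

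I would then invoke functoriality of exponentiation in the exponent: the functor $(p_! X)^{(-)} : \calS\op \to \calS$ preserves isomorphisms, so applying it to $\tau_A^{-1}$ yields an iso $(p_! X)^{\tau_A^{-1}} : (p_! X)^{p_!(p^* A)} \to (p_! X)^A$. The composite displayed in the statement is therefore $(p_! X)^{\tau_A^{-1}} \circ \kappa_{p^*A, X}^{p_!}$, a composition of $\kappa$ with an iso. By the two-out-of-three property for isomorphisms, this composite is an iso precisely when $\kappa$ is.

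There is no real obstacle here; the only point requiring care is to spell out that $(p_!X)^{(-)}$ preserves isos (immediate since it is a functor) and that $\tau_A$ is an iso (already recorded in the text). The reason the lemma is stated at all, of course, is that it licenses taking either formulation as the condition on $X$ relative to $A$ that will define ``weakly Kan" in the next section.
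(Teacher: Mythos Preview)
Your proposal is correct and is exactly the natural argument: since $\tau_A$ is an iso (because $p^*$ is fully faithful), the map $(p_! X)^{\tau_A^{-1}}$ is an iso, and then two-out-of-three for isomorphisms gives the equivalence. The paper in fact leaves this lemma without proof, treating it as evident from the remark just before the statement that $\tau$ is invertible; your write-up simply makes that implicit reasoning explicit.
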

%

   The pre-cohesive $p$ is said to satisfy the {\em Continuity} condition if for every $X$ in $\calE$ and $A$ in $\calS$, the equivalent conditions of Lemma~\ref{LemDefWeaklyKanNew} hold.
   A geometric morphism ${\calE \rightarrow \calS}$ is called {\em cohesive} if it is pre-cohesive and satisfies the Continuity condition.
   
   The first example of a cohesive and sufficiently cohesive topos  was described in \cite{Lawvere07}. In this case, the base $\calS$ is the topos of finite sets so Continuity reduces to preservation of finite products. The examples of cohesive (and sufficiently cohesive) Grothendieck topos that we are aware of are those described in \cite{Menni2014a}. For definiteness take the site
$(\bflp,K)$. Recall that the
category $\bflp$ has as objects the closed intervals $[a,b]$ with $a\leq b$, $a,b\in\mathbb{R}$, and as morphisms piecewise
linear (continuous) maps: a continuous map $f\pts[a,b]\to[c,d]$ is piecewise linear if $a=b$ or $a<b$ and there is a partition 
$a=r_0<r_1<\cdots<r_{m+1}=b$ of $[a,b]$ such that for every $i=0,\dots, m$, 
the restriction $f|_{[r_i,r_{i+1}]}\pts [r_i,r_{i+1}]\to [c,d]$ is a linear function.
Recall as well that the Grothendieck topology on $\bflp$ is given by means of a basis $K$: $K[a,a]$ consists
only of the identity morphism for any $a\in\mathbb{R}$, whereas for $a<b$, $K[a,b]$ consists of those families of the
form 
\[
\{[r_i,r_{i+1}]
\xy
(0,0)*+{}="1",
(10,0)*+{}="2",
\POS "1" \ar@{^(->} "2",
\endxy
[a,b]\,|\,i=0,\cdots, m\}
\]
where $a=r_0<r_1<\cdots<r_{m+1}=b$ is a partition of $[a,b]$. It is proved in Section~{10} of \cite{Menni2014a}  that the canonical ${\Shv(\bflp,K) \rightarrow \Set}$ is cohesive and sufficiently cohesive. It is also shown loc.\ cit.\ (Lemma~{10.5}) 
that the site $(\bflp,K)$ is subcanonical.

    The following concept is perhaps somewhat ad-hoc but it provides a more concrete grasp of the functor ${p_!}$ and proved to be quite useful in \cite{Menni2014a}.
    
A {\em connector} for  ${p:\calE\rightarrow\calS}$ is a bipointed object ${0, 1: 1\to I}$ in $\calE$ such that  the following diagram
\[
\xy
(0,0)*+{p_*(X^I)}="1",
(25,0)*+{p_* X}="2",
(45,0)*+{p_! X}="3",
\POS "1" \ar@<2.5pt>^{p_* \ev_0} "2",
\POS "1" \ar@<-2.5pt>_{p_* \ev_1} "2",
\POS "2" \ar^{\theta_X} "3",
\endxy
\]
is a coequalizer in $\calS$ for each $X$ in $\calE$. (See Definition~{8.1} in \cite{Menni2014a}.)

For  example, consider the topos of simplicial sets $\widehat{\Delta}$. The representable ${I = \Delta(-,[1])}$ has exactly two points, and Example~{8.10} in \cite{Menni2014a}  shows that they form a connector for ${p\pts\widehat{\Delta}\to\con}$. Example~{8.12} loc.\ cit.\ shows that the classifier of strictly bipointed objects has a connector, and Lemma~{8.13} implies that ${\Shv(\bflp,K) \rightarrow \Set}$ does.

    Intuitively, one pictures a connector as a connected object and, in fact, the connectors in our examples are all connected (i.e. ${p_! I = 1}$); but we stress that we have no use for this condition in our proofs, except for that of Proposition~\ref{proposition1.5}.

\section{Filtering functors inducing surjections}
\label{SecSurjective}

   Gabriel and Zisman prove in Section~{III.3.6} of \cite{GabrielZisman}  that geometric realization ${\Psh{\Delta} \rightarrow \Kel}$ reflects isos, where $\Kel$ is the category of Kelley spaces. In \cite{Johnstone79}, Johnstone cites this result as a proof  that the geometric morphism ${\calJ \rightarrow \Psh{\Delta}}$ from the topological topos $\calJ$ to simplicial sets, whose inverse image sends ${[1] \in \Delta}$ to the unit interval in ${\calJ}$, is surjective. (Recall that a geometric morphism ${g:\calF \rightarrow\calE}$ is surjective if and only if ${g^*:\calE\rightarrow \calF}$ reflects isos.)
   
   So, for pre-cohesive geometric morphisms ${f:\calF \rightarrow \calS}$ and ${p:\calE\rightarrow \calS}$ over the same base, we are led to  entertain the idea of a surjective  geometric morphism ${\calF \rightarrow \calE}$ over ${\calS}$ satisfying some condition(s) typical of a realization/singular adjunction. Moreover, the examples suggest that $f$ may be cohesive and that $p$ may be `combinatorial' in some sense.
   We will exhibit examples in Section~\ref{SecExamplesOfPpGm}, but in this section we concentrate on the issue of surjectivity/reflection-of-isos. We found the proof in \cite{GabrielZisman} somewhat difficult to follow so we decided to give an alternative one that is more in tune with our topos theoretic context.

   Let $\calC$ be a small category and let ${A:\calC \rightarrow \Set}$ be a filtering functor (Definition~{VII.6.2} in \cite{maclane2}).
   Denote the induced geometric morphism by ${f:\Set \rightarrow \Psh{\calC}}$. Recall that for any $P$ in $\Psh{\calC}$, ${f^* P}$ in $\Set$ may be described as the quotient of ${\sum_{C \in \calC} (P C) \times (A C)}$ by the equivalence relation that relates the pairs ${(x, a) \in (P C) \times (A C)}$ and ${(x', a') \in (P C') \times (A C')}$ if and only if  there exists a span 
$$\xymatrix{
C & \ar[l]_-{u} E \ar[r]^{u'} & C'
}$$
in $\calC$ and ${c \in A E}$ such that ${u \cdot c = a}$, ${u' \cdot c = a'}$ and ${x\cdot u = x'\cdot u'}$. 
See discussion following Theorem~{VII.6.3} in \cite{maclane2}.

\begin{definition}\label{LemInterior}
For any $C$ in $\calC$, the {\em interior} of ${A C}$ is the subset ${\interior(A C)  \rightarrow A C}$ given by those ${x \in A C}$ such that: for every  ${u:C' \rightarrow C}$ in $\calC$, if ${(A u) y = u\cdot y = x}$ for some ${y \in A C'}$ then $u$ is split epi.
\end{definition} 
   
   We can now prove a sufficient condition for a filtering functor to induce a surjective geometric morphism.
   
\begin{theorem}\label{ThmCharQuico}
Let ${A:\calC \rightarrow \Set}$ be a filtering functor.
If  ${\interior(A C) \not= \emptyset}$ for every $C$ in $\calC$, then  the geometric morphism ${\Set \rightarrow \Psh{\calC}}$ induced by $A$ is surjective.
\end{theorem}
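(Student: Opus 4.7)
The plan is to show that $f^{*}$ reflects isomorphisms, which for a geometric morphism between toposes is equivalent to surjectivity of $f$. Everything rests on the following key lemma: whenever $f^{*}\beta$ is an isomorphism in $\Set$ for some $\beta:P\to Q$ in $\Psh{\calC}$, each component $\beta_{C}:PC\to QC$ is surjective. Once that is in place, reflection of isos will follow by the standard diagonal trick, using also that $f^{*}$ preserves finite limits.

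For the key lemma, fix $C\in\calC$ and $y\in QC$, and by hypothesis choose $a\in\interior(AC)$. Since $f^{*}\beta$ is surjective as a function of sets, the class $[y,a]\in f^{*}Q$ has a preimage $[x_{0},a_{0}]$ with $x_{0}\in PC_{0}$ and $a_{0}\in AC_{0}$, so that $[\beta_{C_{0}}(x_{0}),a_{0}]=[y,a]$ in $f^{*}Q$. Unfolding the span description of the equivalence relation recalled just before Definition~\ref{LemInterior} yields a span $C_{0}\xleftarrow{u}E\xrightarrow{u'}C$ in $\calC$ and $c\in AE$ with $A(u)(c)=a_{0}$, $A(u')(c)=a$, and $Q(u)(\beta_{C_{0}}(x_{0}))=Q(u')(y)$ in $QE$; naturality of $\beta$ rewrites the left-hand side as $\beta_{E}(P(u)(x_{0}))$. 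The interior hypothesis now enters decisively: $A(u')(c)=a\in\interior(AC)$ forces $u':E\to C$ to be split epi, so pick $s:C\to E$ with $u's=\mathrm{id}_{C}$. Applying $Q(s)$ to the identity $\beta_{E}(P(u)(x_{0}))=Q(u')(y)$ and invoking naturality once more gives $\beta_{C}(P(us)(x_{0}))=Q(u's)(y)=y$, so $x:=P(us)(x_{0})\in PC$ is the desired preimage of $y$.

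To close the argument, suppose $f^{*}\alpha$ is iso. Applying the key lemma to $\alpha$ itself shows that each $\alpha_{C}$ is surjective, so $\alpha$ is epi. Now form the diagonal $\Delta:P\to P\times_{Q}P$; since $f^{*}$ preserves pullbacks and $f^{*}\alpha$ is in particular mono, $f^{*}\Delta$ is the diagonal of $f^{*}\alpha$, which is iso. The key lemma applied to $\Delta$ then shows that $\Delta$ is componentwise surjective, and since $\Delta$ is componentwise injective in any case it is iso, i.e.\ $\alpha$ is mono. Hence $\alpha$ is both mono and epi and therefore iso in the topos $\Psh{\calC}$, completing the proof that $f^{*}$ reflects isos. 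I expect the diagonal trick in this last step to be the main subtlety: a head-on attempt at injectivity of $\alpha_{C}$ from the span description yields only an equation of the form $x=x'\cdot(u's)$ involving an a priori non-trivial endomorphism $u's$ of $C$, and routing the key lemma through $\Delta$ is the clean way past this obstacle.
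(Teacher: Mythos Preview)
Your proof is correct and follows essentially the same line as the paper's. The core computation---picking $a\in\interior(AC)$, lifting $[y,a]$ along $f^{*}\beta$, unfolding the span witnessing the equivalence, and using the interior hypothesis to produce a section that yields a preimage of $y$ in $PC$---is identical to the paper's argument. The only difference is in the packaging: the paper first notes that it suffices to prove $f^{*}$ reflects epis (since for a mono $\varphi$, epi implies iso in a topos, and reflecting invertibility of monos implies reflecting isos), and then proves reflection of epis directly; you instead state the key lemma with the hypothesis that $f^{*}\beta$ is iso, prove componentwise surjectivity (using only that $f^{*}\beta$ is surjective, as in the paper), and then spell out the reduction via the kernel-pair diagonal. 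Your final remark about the obstacle to a head-on injectivity argument is exactly the reason both proofs route through reflection of epis rather than attempting injectivity directly.
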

\begin{proof}
Let ${f:\Set \rightarrow \Psh{\calC}}$ be the geometric morphism induced by $A$. It is enough to prove that for every mono ${\varphi:Q \rightarrow P}$ in $\Psh{\calC}$, if ${f^* \varphi:f^* Q \rightarrow f^* P}$ is iso then so is $\varphi$. In turn, it is enough to prove that ${f^*}$ reflects epis. So assume that ${f^* \varphi:f^* Q \rightarrow f^* P}$ is epi. We need to check that ${\varphi = \varphi_C:Q C \rightarrow P C}$ is surjective for each $C$ in $\calC$. To do this let ${x\in P C}$. By hypothesis, ${\interior(A C) \not= \emptyset}$, so let ${a\in \interior(A C)}$ and consider ${x \otimes a \in f^* P}$. Since ${f^* \varphi}$ is epi, 
\[ x\otimes a = (f^* \varphi)(y \otimes b) = (\varphi y)\otimes b \] 
for some ${y \in Q D}$ and ${b \in A D}$. So there exists a span
$$\xymatrix{
C & \ar[l]_-{u} E \ar[r]^{v} & D
}$$
and ${c \in A E}$ such that ${u \cdot c = a}$, ${v \cdot c = b}$ and ${x \cdot u = (\varphi y)\cdot v}$. Since ${a\in \interior(A C)}$, $u$ has a section ${s:C \rightarrow E}$. Then 
\[ x = x\cdot (u s) = (x\cdot u) \cdot s = ((\varphi y)\cdot v) \cdot s = \varphi(y\cdot (v s))
\]
showing that $x$ is in the image of ${\varphi_C:Q C \rightarrow P C}$.
\end{proof}

The intuition is that ${A:\calC \rightarrow \Set}$ sends each $C$ in $\calC$ to the underlying set of a solid object. Now,
for an arrow $f\pts C'\rightarrow C$ in $\calC$, the image of $A(f)$ intersects the interior of $A(C)$ only if the domain is 
at least of the same ``dimension'' than the codomain. The non-empty interior condition captures the idea that all the
lower dimensional figures fail to cover ${A C}$. This intuition might be more easily grasped if we add a small condition on $\calC$.

\begin{lemma}\label{LemOriginalInt}
For ${A:\calC \rightarrow \Set}$ as above. If every map in $\calC$ factors as a split-epi followed by a mono then, for every $C$ in $\calC$, ${\interior(A C)  \rightarrow A C}$ consists of those ${x \in A C}$ such that: for every  mono ${m:C' \rightarrow C}$ in $\calC$, if ${(A u) y = u\cdot y = x}$ for some ${y \in A C'}$ then $m$ is an iso.
\end{lemma}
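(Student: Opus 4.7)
The plan is to establish the set equality by two inclusions, relying on two elementary facts about arbitrary categories that no reader will contest: any morphism that is both a monomorphism and a split epimorphism is an isomorphism, and the composite of a split epimorphism with an isomorphism is again split epi.

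For the inclusion $\interior(A C) \subseteq S$, where $S$ denotes the subset described in the statement, I would take $x \in \interior(A C)$ together with a mono $m \pts C' \to C$ such that $m \cdot y = x$ for some $y \in A C'$. Applying Definition~\ref{LemInterior} directly to $m$ supplies a section of $m$; but a mono that is split epi is necessarily an iso, so $m$ is iso, which is exactly the condition defining $S$.

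For the converse $S \subseteq \interior(A C)$, suppose $x \in S$ and let $u \pts C' \to C$ be an arbitrary arrow of $\calC$ with $u \cdot y = x$ for some $y \in A C'$. The factorization hypothesis decomposes $u$ as $u = m \circ e$ with $e$ split epi and $m$ mono in $\calC$; functoriality of $A$ then yields
\[ x \;=\; u \cdot y \;=\; m \cdot (e \cdot y). \]
Since $e \cdot y \in A C'$ (where now $C'$ denotes the codomain of $e$, i.e.\ the domain of $m$), the defining property of $S$ applied to $m$ forces $m$ to be iso. Composing a section of $e$ with $m^{-1}$ yields a section of $u = m \circ e$, so $u$ is split epi, which is what Definition~\ref{LemInterior} requires for membership in $\interior(A C)$.

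No step here presents a real obstacle; the argument is essentially a translation between two quantifications. The entire content of the lemma is that the factorization hypothesis lets one replace the universal quantifier over all maps of $\calC$ in Definition~\ref{LemInterior} by the more restrictive universal quantifier over monomorphisms, with the conclusion ``split epi'' simultaneously strengthened to ``iso''.
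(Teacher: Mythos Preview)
Your argument is correct and is precisely the routine verification the paper has in mind; the paper's own proof consists of the single word ``Easy.'' The only cosmetic blemish is the reuse of the letter $C'$ for the intermediate object in the factorization after it was already the domain of $u$, but the mathematics is unaffected.
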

\begin{proof}
Easy.
\end{proof}

In this case the intuition is more direct,  ${A:\calC \rightarrow \Set}$ sends each non-iso mono ${C' \rightarrow  C}$ to the inclusion of a `lower dimensional' figure. The
non-empty interior condition captures the idea that all these lower dimensional subfigures
fail to cover ${A C}$.

For instance, consider the usual functor ${A:\Delta \rightarrow \Set}$ that sends ${[n] \in \Delta}$ to the 
simplex ${\Delta^n}$. Fix some ${n \geq 1}$ and consider ${A [n] = \Delta^n}$. Any non-iso mono $[m]\to [n]$
in $\Delta$ factors through some mono $[n-1]\to [n]$ and it is well-known that the induced $A[n-1]\to A[n]$ lies
in a face of $A[n]$ (see, for example, Section~{VII.5} in \cite{maclane}), so the interior of ${A[n]}$ is nonempty.

   More precisely,  recall that one description (see \cite{maclane2},
VIII.7 for instance) has that $\Delta^0$ is a 
point, and for $n\geq 1$, $\Delta^n=\{(t_1,\dots,t_n)\in [0,1]^n | 0\leq t_1\leq \cdots\leq t_n\leq 1\}$. Observe that $\Delta^{\partial_k^i}(t_1,\dots,t_k) = (t_1,\dots,t_{k-1},t_{k},t_k,
t_{k+1},\cdots t_n)$ ($\Delta^{\partial_k^0}$ adds a leftmost zero, and $\Delta^{\partial_k^{k+1}}$ adds a rightmost 1).
From this it is not hard to see that the interior of ${\Delta[n]}$ is ${\{(t_1,\dots,t_n) | 0<t_1<\cdots <t_n<1\}}$, as it is to
be expected. (Of course, the interior of $\Delta^0$ is $\Delta^0$ itself.) We can now apply Theorem \ref{ThmCharQuico}.
 
\begin{corollary}\label{CorFolklore} The geometric morphism ${\Set \rightarrow \Psh{\Delta}}$ whose inverse image sends ${[1] \in \Delta}$ to ${[0, 1]}$ in $\Set$ is surjective.
\end{corollary}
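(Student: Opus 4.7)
The plan is to apply Theorem \ref{ThmCharQuico} directly to the filtering functor ${A:\Delta \rightarrow \Set}$ sending ${[n]}$ to the standard topological simplex ${\Delta^n = \{(t_1,\dots,t_n) \in [0,1]^n \pts 0 \leq t_1 \leq \cdots \leq t_n \leq 1\}}$, with ${A[0]}$ a point. First I would verify that this $A$ is the functor that induces the geometric morphism in question: the left Kan extension of $A$ along the Yoneda embedding is the classical set-level geometric realization, and since $A$ sends $[1]$ to $[0,1]$, and the induced geometric morphism is uniquely determined (up to iso) by where its inverse image sends the representables, it must be the morphism described in the statement. That $A$ is filtering is standard (for instance, because the classical realization preserves finite limits, or by direct verification using the product and equalizer descriptions in $\Delta$).

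Next I would compute the interiors. Every arrow in $\Delta$ factors as a degeneracy (which is split epi) followed by a face (which is a mono), so Lemma \ref{LemOriginalInt} applies and it suffices, for each ${n \geq 1}$, to identify those ${x \in \Delta^n}$ that are not hit by $A m$ for any non-iso mono ${m:[k] \rightarrow [n]}$. Since every non-iso mono in $\Delta$ factors through one of the elementary faces ${\partial^i_n:[n-1] \rightarrow [n]}$, and the explicit formula ${\Delta^{\partial^i_n}(t_1,\dots,t_{n-1})}$ recalled in the paragraph preceding the corollary inserts a repetition (or a leftmost $0$, or a rightmost $1$), the union of the images of all non-iso monos into $[n]$ is exactly the set ${\{(t_1,\dots,t_n) : t_1 = 0, \text{ or some } t_i = t_{i+1}, \text{ or } t_n = 1\}}$. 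Hence
\[
\interior(A[n]) = \{(t_1,\dots,t_n) \pts 0 < t_1 < \cdots < t_n < 1\},
\]
which is nonempty (it contains, e.g., ${(\tfrac{1}{n+1},\tfrac{2}{n+1},\dots,\tfrac{n}{n+1})}$). For ${n=0}$, ${\interior(A[0]) = A[0]}$ is nonempty as well.

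With both hypotheses of Theorem \ref{ThmCharQuico} in hand — that $A$ is filtering and that each ${\interior(A[n])}$ is nonempty — the theorem yields surjectivity of the induced ${\Set \rightarrow \Psh{\Delta}}$.

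There is no genuine obstacle: the calculation of the interior is the only nontrivial ingredient, and it has essentially been carried out already in the discussion following Lemma \ref{LemOriginalInt}. The only point that requires a moment of care is the identification of $A$ with the functor inducing the geometric morphism described in the statement, but this is forced by cocontinuity of $f^*$ together with the requirement ${f^*(\Delta(-,[1])) = [0,1]}$.
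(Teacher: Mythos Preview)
Your proposal is correct and follows exactly the paper's approach: the discussion preceding the corollary computes the interior of each $A[n]$ via Lemma~\ref{LemOriginalInt} and the factorization of monos through elementary faces, obtaining $\{(t_1,\dots,t_n)\mid 0<t_1<\cdots<t_n<1\}$, and then invokes Theorem~\ref{ThmCharQuico}. You are slightly more explicit than the paper in justifying that $A$ is filtering and that it induces the stated geometric morphism, but these are details the paper simply takes as understood.
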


  As a further corollary we obtain an alternative proof of the `topologized' version.
  
\begin{corollary}[Gabriel-Zisman/Johnstone]\label{CorJohnstone} The geometric morphism ${\calJ \rightarrow \Psh{\Delta}}$ whose inverse image sends ${[1] \in \Delta}$ to ${[0, 1]}$ in $\calJ$ is surjective.
\end{corollary}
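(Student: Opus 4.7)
The plan is to deduce Corollary~\ref{CorJohnstone} from the set-level version in Corollary~\ref{CorFolklore} by factoring the realization through a geometric morphism ${\Set \rightarrow \calJ}$. The topological topos $\calJ$ is pre-cohesive over $\Set$; writing ${p \pts \calJ \rightarrow \Set}$ for this pre-cohesive morphism, the adjunction ${p_* \dashv p^!}$ yields a geometric morphism ${q \pts \Set \rightarrow \calJ}$ whose inverse image ${q^* = p_*}$ is the ``underlying set of points'' functor.

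Let ${g \pts \calJ \rightarrow \Psh{\Delta}}$ be the geometric morphism in the statement. Since ${g^{*}[1] = [0,1]}$ in $\calJ$, and ${p_*}$ sends this object to the real unit interval ${[0,1]}$ in $\Set$, the composite ${g q \pts \Set \rightarrow \Psh{\Delta}}$ has inverse image sending ${[1] \in \Delta}$ to ${[0,1]}$ in $\Set$. By the universal property of $\Psh{\Delta}$ as a classifying topos, ${g q}$ is isomorphic to the geometric morphism of Corollary~\ref{CorFolklore}, and is therefore surjective; equivalently, ${(gq)^{*} = p_{*} g^{*}}$ reflects isomorphisms.

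To finish, I would conclude that $g^{*}$ itself reflects isomorphisms by a formal chase: given $\varphi$ in $\Psh{\Delta}$ such that ${g^{*}\varphi}$ is iso in $\calJ$, the functor $p_{*}$ preserves isos and so ${p_{*}(g^{*}\varphi)}$ is iso in $\Set$, whence $\varphi$ is iso by the reflection property of ${p_{*} g^{*}}$. The only non-formal step, and hence the (mild) main obstacle, is the identification of ${p_{*}}$ on the $\calJ$-interval with the set ${[0,1]}$; I expect this to follow routinely from Johnstone's explicit description of $\calJ$, in which global sections of the sheaf representing ${[0,1]}$ recover its underlying set of points.
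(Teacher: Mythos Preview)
Your approach is essentially the paper's: factor through the geometric morphism ${c:\Set \rightarrow \calJ}$ whose inverse image is the points functor ${p_*}$, identify the composite ${g c}$ with the morphism of Corollary~\ref{CorFolklore}, and deduce that ${g^*}$ is conservative (equivalently faithful) from conservativity of ${p_* g^*}$. One minor point: you justify the existence of $c$ by asserting that $\calJ$ is pre-cohesive over $\Set$, but the paper invokes only that ${\calJ \rightarrow \Set}$ is \emph{local} (i.e.\ ${p_*}$ has a fully faithful right adjoint), which is all your argument actually uses and is the established fact from Johnstone's original paper; full pre-cohesiveness of the topological topos is not asserted here and is not needed.
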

\begin{proof}
Let us denote the canonical geometric morphism from the topological topos by ${f:\calJ \rightarrow \Set}$ and the realization/singular morphism by ${r:\calJ \rightarrow \Psh{\Delta}}$. The morphism $f$  is local (in the sense that it has a fully faithful right adjoint), so the `points' functor ${f_*:\calJ \rightarrow \Set}$ is the inverse image of a geometric morphism ${c:\Set \rightarrow \calJ}$. The composite
$$\xymatrix{\Set \ar[r]^-{c} & \calJ \ar[r]^-r & \Psh{\Delta}}$$
is such that its inverse image sends ${\Delta(\_, [1]) \in \Psh{\Delta}}$ to ${[0, 1]}$ in $\Set$ so it must coincide with the geometric morphism of Corollary~\ref{CorFolklore}. Hence, the composite ${c^* r^* = f_* r^*:\Psh{\Delta} \rightarrow \Set}$ is faihful. Then ${r^*:\Psh{\Delta} \rightarrow \calJ}$ is faithful, which means that $r$ is surjective as a geometric morphism.
\end{proof}

   It seems relevant to compare the proof of Corollary~\ref{CorJohnstone} with Gabriel and Zisman's proof of conservativity of geometric realization ${\mid\_\mid: \Psh{\Delta} \rightarrow \Kel}$ in III.3.6 of \cite{GabrielZisman}. The relevant part is at the end of page~{53} loc.~cit., where it is proved that if ${f:X \rightarrow Y}$ is a non invertible mono in ${\Psh{\Delta}}$ then its geometric realization ${\mid f \mid}$ is not invertible either. 
   The key auxiliary  fact is that the group of automorphisms of ${\mid\_\mid}$ acts on $Y$ and that the orbits of this action are indexed by the non-degenerate simplices of $Y$ (see Proposition~{III.1.6} loc.~cit.). In turn, the proof of this fact rests on the identification of the group of automorphisms of ${\mid\_\mid: \Psh{\Delta} \rightarrow \Kel}$  with the group of increasing continuous maps ${s: [0, 1] \rightarrow [0, 1]}$ such that ${s 0 = 0}$ and ${s 1 = 1}$ (II.1.3). From a topos theoretic perspective, the last fact is a corollary of the classifying role of ${\Psh{\Delta}}$; while the former is an instance of the more general observation that `algebraic structure is adjoint to semantics' (Theorem~{III.1.2} in \cite{LawvereThesis}).
   As a final remark on this example we stress that the group of automorphisms of the realization functor plays no role in our proof.

For another application of Theorem \ref{ThmCharQuico} consider the classifying topos for strictly bipointed objects from 
Example~\ref{ExStrictlyBipointed}. Let $\calA$ be the category of finite, strictly bipointed sets. It is easy to see that monos and epis in $\calA$ are split and that every map factors as an epi followed by a mono. Thus $\calA\op$ satisfies the condition
in Lema \ref{LemOriginalInt}.
To consider the corresponding filtering functor $\calA\op\to\con$ and the condition on interiors we switch to
the following ``geometric'' version $\bfc$ of $\calA\op$. The objects of $\bfc$ are
cubes, that is, they are of the form $[0,1]^S$ with $S$ a finite set; a morphism $f\pts [0,1]^S\to [0,1]^T$ in $\bfc$ is built up
from projections and the constants 0 and 1 only, that is to say, $f$ is of the form $f=\langle f_t\rangle_{t\in T}$ 
where for every $t\in T$, $f_t\pts[0,1]^S\to [0,1]$ is a projection $\pi_s\pts[0,1]^S\to[0,1]$, with $s\in S$, or the constant zero 
$\ulcorner 0\urcorner\pts[0,1]^S\to[0,1]$, or the constant one $\ulcorner 1\urcorner\pts[0,1]^S \to[0,1]$. Then $\bfc$ and $\calA\op$
are isomorphic. The filtering functor in question is the inclusion $\bfc\to\con$. Now, intuition indicates that the interior of $[0,1]^S$
are those points that avoid the border of the cube (no coordinate equals zero and no coordinate equals one) an also avoid the
``diagonals'' (no two coordinates are equal). Indeed, observe that $f=\langle f_t\rangle_{t\in T}
\pts [0,1]^S\to[0,1]^T$ is mono in $\bfc$ iff for every $s\in S$
exists $t$ such that $f_t=\pi_s$. Now, if $f$ is not an iso, then there is an $s$ with two such $t$'s, or there is a $t\in T$ such
that $f_t$ is constant zero or constant 1. It is clear then that any point in the image of $A(f)$ has a zero coordinate or has a
coordinate that is one, or has two equal coordinates. Thus any point $(x_1,\dots,x_n)\in A([0,1]^S)=[0,1]^S$ 
such that for all $i$, $x_i\not=0,1$ and for every $i\not=j$, $x_i\not= x_j$, is in the interior. We may conclude  the following.

\begin{corollary}\label{bipointedsurjection} 
Let $\calA$ be the category of finite, strictly bipointed sets, and let $I$ the free bipointed object in one generator in $\calA$.
The geometric morphism ${\Set \rightarrow \con^\calA}$ whose inverse image sends $\calA(I,-)$ to ${[0, 1]}$ in
$\Set$ is surjective.
\end{corollary}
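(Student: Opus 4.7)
The plan is to apply Theorem~\ref{ThmCharQuico} to the filtering functor ${A:\calA\op \rightarrow \Set}$ given by ${A = \calA(I, -)}$, which under the equivalence ${\calA\op \simeq \bfc}$ described in the paragraph preceding the corollary, corresponds to the inclusion ${\bfc \rightarrow \Set}$ sending ${[0,1]^S}$ to its underlying set. The geometric morphism ${\Set \rightarrow \con^\calA = \Psh{\calA\op}}$ induced by this filtering functor is the one in the statement, so it suffices to verify that ${\interior(A C) \neq \emptyset}$ for every object ${C = [0,1]^S}$ of $\bfc$.

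Since in $\calA$ monos and epis are split and every map factors as an epi followed by a mono, the same is true in ${\calA\op \simeq \bfc}$, so Lemma~\ref{LemOriginalInt} applies: the interior of $[0,1]^S$ consists of those points ${x \in [0,1]^S}$ such that whenever ${x = A(m)(y)}$ for a mono ${m:[0,1]^T \rightarrow [0,1]^S}$ in $\bfc$ and some ${y \in [0,1]^T}$, then $m$ is an iso. I would then characterize the monos: a morphism ${f = \langle f_t\rangle_{t\in T}:[0,1]^S \rightarrow [0,1]^T}$ of $\bfc$ is dual to a mono in $\calA$ (that is, is an epi in $\bfc$) precisely when every projection $\pi_s$ occurs among the $f_t$; dually, a mono ${m:[0,1]^T \rightarrow [0,1]^S}$ in $\bfc$ corresponds to a map in $\calA$ for which every coordinate of the source is hit. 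Unpacking this, a non-iso mono in $\bfc$ into ${[0,1]^S}$ either has some coordinate which is the constant $0$ or $1$, or has two coordinates which are the same projection, so every point in the image of such a mono has a coordinate equal to $0$ or $1$, or has two equal coordinates.

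Consequently, any tuple ${(x_1,\ldots,x_n) \in [0,1]^S}$ with ${x_i \notin \{0,1\}}$ for all $i$ and ${x_i \neq x_j}$ for ${i \neq j}$ lies in the interior. Such points certainly exist (e.g. any strictly increasing sequence in ${(0,1)}$ of length $|S|$), and in the case ${S = \emptyset}$ the object ${[0,1]^\emptyset}$ is a one-point set which is trivially its own interior. Hence ${\interior(A C) \neq \emptyset}$ for every $C$, and Theorem~\ref{ThmCharQuico} delivers the desired surjectivity.

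The main conceptual step is the characterization of the monos in $\bfc$ and the translation into the concrete ``no zero coordinate, no one coordinate, no repeated coordinates'' description of the interior; once this is in hand, exhibiting an interior point is trivial, and the surjection is immediate from Theorem~\ref{ThmCharQuico}.
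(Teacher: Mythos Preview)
Your proof is essentially the same as the paper's (the argument is the discussion preceding the corollary): apply Theorem~\ref{ThmCharQuico} to the inclusion $\bfc \rightarrow \Set$, using Lemma~\ref{LemOriginalInt} and the explicit description of non-iso monos in $\bfc$ to exhibit interior points. One slip to correct: the condition ``every projection $\pi_s$ occurs among the $f_t$'' characterizes when $f:[0,1]^S \rightarrow [0,1]^T$ is a \emph{mono} in $\bfc$ (dual to an \emph{epi} in $\calA$), not an epi; nevertheless your subsequent description of non-iso monos into $[0,1]^S$ and of the interior is correct, so the argument goes through.
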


\section{Weakly Kan objects}
\label{SecWK}

   Let ${p:\calE \rightarrow \calS}$ be a pre-cohesive geometric morphism.

\begin{definition}\label{DefWeaklyKan}
An object $X$ in $\calE$ will be called {\em weakly Kan} if, for every $A$ in $\calS$, the equivalent conditions of Lemma~\ref{LemDefWeaklyKanNew} hold. That is, the following composite
$$\xymatrix{
p_! (X^{p^* A}) \ar[r]^-{\kappa} & (p_! X)^{p_!(p^* A)} \ar[rr]^-{(p_! X)^{\tau^{-1}}} && (p_! X)^A
}$$
is an iso or, equivalently,  ${\kappa = \kappa^{p_!}_{p^* A, X}:p_! (X^{p^* A}) \rightarrow (p_! X)^{p_!(p^* A)}}$ is an iso.
\end{definition}

   Roughly speaking, $X$ is weakly Kan if $p_!$ preserves arbitrary powers of $X$. 
   Notice that $p$ satisfies the Continuity condition if and only if  every object in $\calE$ is weakly Kan. 
   
   The terminology intends to invite the reader to think of a weakly Kan object as something like a Kan complex. We will show that, in the case of simplicial sets, every Kan complex in the usual sense is weakly Kan as an object in $\Psh{\Delta}$.

   Let ${\wk{\calE} \rightarrow \calE}$ be the full subcategory determined by the weakly Kan objects. 

\begin{lemma}\label{LemDiscreteImpliesWK} For every object $B$ in $\calS$, ${p^* B}$ in $\calE$ is weakly Kan. 
\end{lemma}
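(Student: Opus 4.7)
The plan is to show that the comparison $\kappa = \kappa^{p_!}_{p^*A,\, p^*B} \colon p_!((p^*B)^{p^*A}) \to (p_!(p^*B))^{p_!(p^*A)}$ is an iso for every $A$ in $\calS$. The main tool is the functoriality square for $\kappa$ recalled just before Lemma~\ref{LemDefWeaklyKanNew}, applied to the finite-product-preserving functors $p^* \colon \calS \to \calE$ and $p_! \colon \calE \to \calS$. It yields the identity
$$\kappa^{p_! p^*}_{A,B} \;=\; \kappa^{p_!}_{p^*A,\, p^*B} \,\circ\, p_!\bigl(\kappa^{p^*}_{A,B}\bigr),$$
so it is enough to show that the two outer comparisons $\kappa^{p^*}_{A,B}$ and $\kappa^{p_! p^*}_{A,B}$ are isomorphisms; $\kappa$ then follows by composing and inverting.

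The first amounts to the standard fact that $p^*$ preserves exponentials. I would verify this by Yoneda: for every $X$ in $\calE$, one has $\calE(X, p^*(B^A)) \cong \calS(p_! X \times A, B)$ using the adjunction $p_! \dashv p^*$ and exponentiation in $\calS$, while $\calE(X, (p^*B)^{p^*A}) \cong \calS(p_!(X \times p^*A), B)$ similarly. Since $p_!$ preserves binary products and $\tau_A \colon p_!(p^*A) \to A$ is an iso (fully faithfulness of $p^*$), the canonical map $p_!(X \times p^*A) \to p_! X \times A$ is iso; the induced natural isomorphism of hom-functors is by construction the one represented by $\kappa^{p^*}_{A,B}$, so $\kappa^{p^*}_{A,B}$ is iso. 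For the second outer comparison, observe that the counit $\tau \colon p_! p^* \to \mathrm{Id}_\calS$ is not merely a natural iso but a monoidal one, because both $p_!$ and $p^*$ preserve finite products and hence $\tau$ is automatically compatible with products. Consequently $\kappa^{p_! p^*}_{A,B}$ corresponds under $\tau$ to $\kappa^{\mathrm{Id}}_{A,B} = \mathrm{id}_{B^A}$, and is therefore an iso.

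Combining the two through the displayed identity gives that $\kappa^{p_!}_{p^*A,\, p^*B}$ is iso, so $p^*B$ is weakly Kan. The only mildly substantive point is the preservation of exponentials by $p^*$; everything else is formal manipulation of the $\kappa$-functoriality square together with the fact that $\tau$ is a monoidal natural iso. Intuitively, this matches the expectation that discrete objects $p^*B$ should behave like Kan objects, since $p_!$ acts trivially on them.
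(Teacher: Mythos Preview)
Your argument is correct and is essentially the same as the paper's, just packaged differently. The paper exhibits an explicit inverse
\[
(p_!(p^*B))^{p_!(p^*A)} \xrightarrow{(\tau_B)^{\tau_A^{-1}}} B^A \xrightarrow{\tau^{-1}} p_!(p^*(B^A)) \xrightarrow{p_!\kappa^{p^*}} p_!((p^*B)^{p^*A}),
\]
citing Corollary~A1.5.9 of the Elephant for the fact that $\kappa^{p^*}$ is an iso; unwinding, this inverse is exactly $p_!(\kappa^{p^*}_{A,B}) \circ (\kappa^{p_!p^*}_{A,B})^{-1}$, which is precisely what your factorization $\kappa^{p_!p^*}_{A,B} = \kappa^{p_!}_{p^*A,p^*B} \circ p_!(\kappa^{p^*}_{A,B})$ produces once both outer comparisons are known to be isos. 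Your use of the functoriality square and the observation that $\tau$ is automatically monoidal make the structure of the argument a bit more transparent, but the ingredients and the logic are identical.
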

\begin{proof}
It is not hard to see that 
$$\xymatrix{
(p_! (p^* B))^{p_! (p^* A)} \ar[rr]^-{(\tau_B)^{\tau_A^{-1}}} && B^A \ar[r]^-{\tau^{-1}} & 
p_!(p^* (B^A)) \ar[r]^-{p_! \kappa} & p_!((p^* B)^{p^*A})
}$$
is the inverse of ${\kappa : p_!((p^* B)^{p^*A})\rightarrow  (p_! (p^* B))^{p_! (p^* A)}}$. Indeed, we know that the morphism
${\kappa\pts p^*(B^A)\to (p^*B)^{p^*A}}$ 
is an iso (Corollary A1.5.9 in \cite{elephant}), so it suffices to show that one of the composites is the identity.
\end{proof}

  In other words, discrete objects are weakly Kan.
   In particular, the category ${\wk\calE}$ has initial and terminal object and the inclusion ${\wk\calE \rightarrow \calE}$ preserves them.
   
\begin{lemma}
The subcategory ${\wk\calE \rightarrow \calE}$ is closed under finite products.
\end{lemma}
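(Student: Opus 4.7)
The plan is to handle the terminal object separately and then reduce the binary case to a naturality argument. I would first note that $1_\calE \cong p^* 1_\calS$ because $p^*$ preserves the terminal, so Lemma~\ref{LemDiscreteImpliesWK} shows that $1_\calE$ is weakly Kan.

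For binary products, I would fix $A$ in $\calS$ and weakly Kan objects $X, Y$ in $\calE$, and view the map $\kappa^{p_!}_{p^* A,-}$ as a natural transformation
\[ p_!\bigl((-)^{p^* A}\bigr) \longrightarrow \bigl(p_!(-)\bigr)^{p_!(p^* A)} \]
between functors $\calE \to \calS$. Both the source and target functors preserve finite products: the exponential $(-)^{p^* A}$ is a right adjoint in $\calE$, $p_!$ preserves finite products by pre-cohesion, and $(-)^{p_!(p^* A)}$ is a right adjoint in $\calS$. The next step is to observe that a natural transformation between product-preserving functors, evaluated at a product, is (up to the canonical isos) the product of its components; this identifies $\kappa^{p_!}_{p^* A, X \times Y}$ with $\kappa^{p_!}_{p^* A, X} \times \kappa^{p_!}_{p^* A, Y}$. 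Since $X$ and $Y$ are weakly Kan, both factors are isos in $\calS$, so their product is an iso as well, and Definition~\ref{DefWeaklyKan} yields that $X \times Y$ is weakly Kan.

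The only point needing genuine verification is the identification of $\kappa^{p_!}_{p^* A, X \times Y}$ with the product of the component $\kappa$'s under the canonical isomorphisms. This reduces to a diagram chase on the exponential transpose defining $\kappa$: one transposes across the adjunction $(-) \times p^* A \dashv (-)^{p^* A}$ and uses that evaluation on $X \times Y$ splits as the pairing of the evaluations on $X$ and on $Y$ composed with the projections from $X \times Y$. I expect this compatibility of $\kappa$ with finite products to be the main (but essentially routine) obstacle in the proof.
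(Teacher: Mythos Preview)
Your proof is correct and follows essentially the same route as the paper: the paper's argument displays the explicit commuting rectangle identifying $\kappa_{p^*A,\, X\times Y}$ with $\kappa_{p^*A,\, X} \times \kappa_{p^*A,\, Y}$ via the product-preservation isos for $p_!$, $(-)^{p^*A}$, and $(-)^{p_!(p^*A)}$, which is precisely the concrete instance of your general principle about natural transformations between product-preserving functors. The verification you flag as the main obstacle is in fact just naturality of $\kappa_{U,-}$ in its second variable applied to the two projections $X\times Y \to X$ and $X\times Y \to Y$, so no separate diagram chase with evaluation is needed.
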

\begin{proof}
We know that the terminal object is weakly Kan. 
For objects ${X, Y\in \wk\calE}$ and $A$ in $\calS$ it is not hard to see that the canonical
morphism ${\kappa\pts p_!((X\times Y)^{p^*A})\to p_!(X\times Y)^{p_! (p^* A)}}$ is the composite
$$\xymatrix{
p_!(X^{p^*A}\times Y^{p^*A}) \ar[r]^-{\cong} & p_!(X^{p^*A})\times p_!(Y^{p^*A}) \ar[r]^-{\kappa\times\kappa} & (p_! X)^{p_! (p^* A)}\times (p_! Y)^{p_! (p^* A)} \ar[d]^-{\cong} \\
p_!((X\times Y)^{p^*A}) \ar[u]^-{\cong} \ar[r]_-{\kappa} & (p_!(X\times Y))^{p_! (p^* A)} & \ar[l]^-{\cong}  (p_!X\times p_!Y)^{p_! (p^* A)} \\
}$$
where the isomorphisms come from $p_!$, ${(\_)^{p^*A}}$ and ${(\_)^{p_! (p^* A)}}$ preserving finite products.
\end{proof}
   
   The subcategory ${\wk\calE \rightarrow \calE}$ is also closed under arbitrary powers in the following sense.

\begin{lemma}\label{LemPowersOfWK}
For every $X$ in ${\wk\calE}$ and $A$ in $\calS$, ${X^{p^* A}}$ is also in ${\wk\calE}$.
\end{lemma}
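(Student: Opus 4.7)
Fix $B$ in $\calS$; we need to show that the composite from Lemma~\ref{LemDefWeaklyKanNew} for $X^{p^*A}$ at $B$ is an iso, equivalently that $\kappa^{p_!}_{p^*B,\,X^{p^*A}} : p_!((X^{p^*A})^{p^*B}) \rightarrow (p_!(X^{p^*A}))^{p_!(p^*B)}$ is an iso.

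The idea is to reduce the weakly Kan property of $X^{p^*A}$ at $B$ to the weakly Kan property of $X$ itself at $A \times B$. Two general facts are used. First, $p^*$ preserves finite products, so $p^*A \times p^*B \cong p^*(A \times B)$. Second, in any cartesian closed category we have the natural iso $(Z^U)^V \cong Z^{U \times V}$. Applying both to $X^{p^*A}$ with exponent $p^*B$ gives a natural iso $(X^{p^*A})^{p^*B} \cong X^{p^*A \times p^*B} \cong X^{p^*(A \times B)}$. Applying $p_!$ and invoking weakly Kan-ness of $X$ at $A\times B$, we identify
\[
p_!((X^{p^*A})^{p^*B}) \;\cong\; p_!(X^{p^*(A \times B)}) \;\cong\; (p_! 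X)^{A \times B}.
\]

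On the codomain side, weakly Kan-ness of $X$ at $A$ gives $p_!(X^{p^*A}) \cong (p_! X)^A$, and the counit $\tau$ identifies $p_!(p^*B)$ with $B$. Thus
\[
(p_!(X^{p^*A}))^{p_!(p^*B)} \;\cong\; ((p_! X)^A)^B \;\cong\; (p_! X)^{A \times B}.
\]
Both domain and codomain are therefore canonically isomorphic to $(p_! X)^{A \times B}$, and it remains to check that under these identifications $\kappa^{p_!}_{p^*B,\,X^{p^*A}}$ becomes the identity (up to the canonical symmetry of $A\times B$).

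This last verification is the main technical step. It is a coherence computation comparing the single $\kappa$ at exponent $p^*B$ applied to $X^{p^*A}$ with two successive applications of $\kappa$ (first at $p^*A$, then at $p^*B$), which in turn collapse into a single $\kappa$ at $p^*(A\times B)$. The required commutative diagram follows from the naturality of $\kappa$ together with the composition formula recalled in Section~\ref{SecPreCohesive} relating $\kappa$ for a composite of product-preserving functors, applied here to the interaction between $p_!$ and the product-preserving functors $(-)^{p^*A}$ and $(-)\times p^*B$. Once this diagram is checked, the outer rectangle is an iso because each of its constituent maps is, so $\kappa^{p_!}_{p^*B,\,X^{p^*A}}$ is an iso and $X^{p^*A}$ is weakly Kan.
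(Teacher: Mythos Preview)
Your approach is correct and essentially the same as the paper's: reduce the weakly Kan condition for $X^{p^*A}$ at $B$ to that of $X$ at $A\times B$ via the iso $(X^{p^*A})^{p^*B}\cong X^{p^*(A\times B)}$. The paper compresses your entire argument into one line, asserting that $\kappa$ for $X^{p^*A}$ at $p^*B$ ``is as iso as'' $\kappa$ for $X$ at $p^*(A\times B)$; your version spells out the coherence verification (factoring $\kappa_{p^*(A\times B),X}$ through $\kappa_{p^*B,X^{p^*A}}$ and $\kappa_{p^*A,X}$) that the paper leaves implicit.
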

\begin{proof}
For every ${B\in \calS}$
\[ (X^{p^* A})^{p^* B} \cong X^{(p^* A) \times (p^* B)} \cong  X^{p^*(A\times B)} \]
so ${\kappa:p_!((X^{p^* A})^{p^* B}) \rightarrow (p_!(X^{p^* A}))^{p_!(p^* B)}}$ is as iso as ${\kappa:p_!(X^{p^*(A\times B)}) \rightarrow (p_! X)^{p_!(p^*(A\times B))}}$.
\end{proof}

   At this point one may wonder if weakly Kan is equivalent to discrete. We will show that in the more interesting cases,  the subcategory ${\wk\calE \rightarrow \calE}$ is much bigger. In fact, we will show that if $\calE$ is covered by a cohesive topos $\calF$ then 
   ${\wk\calE \rightarrow \calE}$ contains all the  `singular complexes' of the objects in $\calF$. 

\section{Weakly Kan objects in the presence of a connector}
\label{SecWKchar}

   In this section we give a characterization of weakly Kan objects in pre-cohesive toposes over $\Set$ equipped with a connector. In order to do this we first need to study the following concept.
   
\begin{definition}\label{DefPowerfulCoequalizer} In a cartesian closed category, a fork as on the left below
$$\xymatrix{
E \ar[r]<+.5ex>^-{e_0} \ar[r]<-.5ex>_-{e_1} & D \ar[r]^-q & Q &&
 E^A \ar[r]<+.5ex>^-{e_0^A} \ar[r]<-.5ex>_-{e_1^A} & D^A \ar[r]^-{q^A} & Q^A
}$$
is {\em powerful} if for every object $A$, the fork on the right above is a coequalizer.
\end{definition}

   Notice that a powerful fork as above is always a coequalizer. Indeed, this follows from the case ${A = 1}$.

   Assume now that ${p:\calE \rightarrow \calS}$ is a pre-cohesive geometric morphism equipped with a connector  ${0, 1:1 \rightarrow I}$. 
   
\begin{lemma}\label{LemCharWKwithConnectors}
An object $X$ in $\calE$ is weakly Kan if and only if  the fork below
$$\xymatrix{
p_* (X^I) \ar[rr]<+.5ex>^-{p_* \ev_0} \ar[rr]<-.5ex>_-{p_* \ev_1} && p_* X \ar[r]^-{\theta} & p_! X \\
}$$
is a powerful coequalizer in $\calS$.
\end{lemma}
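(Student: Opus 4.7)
The plan is to bootstrap from the connector coequalizer applied to $X^{p^*A}$ (rather than to $X$) for each $A\in\calS$. Fixing $A$, the connector hypothesis applied to $X^{p^*A}$ yields a coequalizer $p_*((X^{p^*A})^I)\rightrightarrows p_*(X^{p^*A})\to p_!(X^{p^*A})$ in $\calS$. I would rewrite this using two natural isomorphisms: first, the exponential identity $(X^{p^*A})^I\cong (X^I)^{p^*A}$, and second, a natural iso $\alpha_Y\pts p_*(Y^{p^*A})\cong (p_*Y)^A$, which is $\kappa^{p_*}_{p^*A,Y}$ composed with the unit iso $A\cong p_*p^*A$ in the exponent (this is an iso because $p^*$ is fully faithful and $p_*$ preserves finite products). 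Exploiting naturality of $\alpha$ in $Y$ to identify the parallel pair, the connector coequalizer rewrites as a coequalizer
$$(p_*(X^I))^A\rightrightarrows (p_*X)^A\xrightarrow{\bar\theta}p_!(X^{p^*A})$$
in $\calS$ with $\bar\theta=\theta_{X^{p^*A}}\circ\alpha_X^{-1}$ and parallel arrows $(p_*\ev_0)^A,(p_*\ev_1)^A$.

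The crux is then the coherence identity $\kappa\circ\bar\theta=(\theta_X)^A$, i.e.\ commutativity of the square with top $\alpha_X$, bottom the composite $\kappa$ from Definition~\ref{DefWeaklyKan}, and verticals $\theta_{X^{p^*A}}$ and $(\theta_X)^A$. The plan is to transpose both composites along $(-)\times A\dashv(-)^A$ to maps $p_*(X^{p^*A})\times A\to p_!X$, unwind the definitions of $\kappa^{p_*}$ and $\kappa^{p_!}$ in terms of evaluation, and invoke naturality of $\theta$ applied to $\ev\pts X^{p^*A}\times p^*A\to X$, reducing both sides to the common composite
$$p_*(X^{p^*A})\times A\xrightarrow{\sim}p_*(X^{p^*A}\times p^*A)\xrightarrow{\theta}p_!(X^{p^*A}\times p^*A)\xrightarrow{p_!\ev}p_!X .$$
Two small compatibilities make the two identifications of $A$ agree: the identity $\theta_{p^*A}\circ\eta_A=\tau^{-1}_A$ (which follows from the triangle identity for $p^*\dashv p_*$ together with the description $\theta=p_!\epsilon\circ\tau^{-1}$), and the monoidality of $\theta$ with respect to finite products (both $p_*$ and $p_!$ preserve them, and the comparison maps intertwine $\theta$ by a direct check from its definition).

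With the coherence identity in hand, the fork in the statement raised to the $A$-th power is precisely the post-composition of the rewritten connector coequalizer with the single map $\kappa\pts p_!(X^{p^*A})\to (p_!X)^A$. So if $X$ is weakly Kan then $\kappa$ is iso for every $A$, and post-composing a coequalizer with an iso yields a coequalizer, so the fork is powerful. Conversely, if the fork is a coequalizer for every $A$, it shares the same parallel pair with the rewritten connector coequalizer, so the induced comparison between their coequalizer objects is forced to be iso; but that comparison is $\kappa$, so $X$ is weakly Kan. The main obstacle is the coherence square in the middle step; everything else is formal manipulation of the connector hypothesis and the universal property of coequalizers.
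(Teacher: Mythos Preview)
Your proposal is correct and follows essentially the same route as the paper's proof: apply the connector coequalizer to $X^{p^*A}$, rewrite the parallel pair via the canonical isos $p_*((X^{p^*A})^I)\cong p_*((X^I)^{p^*A})\cong (p_*(X^I))^A$ and $p_*(X^{p^*A})\cong (p_*X)^A$, and then compare with the $A$-th power of the fork for $X$. The only difference is one of explicitness: the paper draws the comparison diagram, asserts that it ``commutes in the evident sense'', leaves the right vertical map $p_!(X^{p^*A})\to (p_!X)^A$ unlabeled, and concludes directly; you identify that this right vertical must be the canonical comparison of Definition~\ref{DefWeaklyKan} and spell out a verification of the coherence square $\kappa\circ\theta_{X^{p^*A}}=(\theta_X)^A\circ\alpha_X$ by transposing and invoking naturality of $\theta$ at $\ev$, together with its monoidality. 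This extra care is warranted, since without it one only knows that the bottom fork is a coequalizer iff \emph{some} induced comparison is an iso, not the specific one defining weakly Kan.
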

\begin{proof}
Let $X$ in $\calE$ and $A$ in $\calS$.
Since $I$ is a connector, the two forks below 
$$\xymatrix{
p_*(X^I) \ar[rr]<+.5ex>^-{p_* \ev_0} \ar[rr]<-.5ex>_-{p_* \ev_1} && p_*X \ar[r]^-{\theta} & p_! X 
& p_*((X^{p^* A})^I) \ar[rr]<+.5ex>^-{p_* \ev_0} \ar[rr]<-.5ex>_-{p_* \ev_1} && p_*(X^{p^* A}) \ar[r]^-{\theta} & p_! (X^{p^* A}) 
}$$
are coequalizers. The diagram below
$$\xymatrix{
p_*((X^{p^* A})^I) \ar[d]_-{\cong} \ar[rr]<+.5ex>^-{p_* \ev_0} \ar[rr]<-.5ex>_-{p_* \ev_1} && p_*(X^{p^* A}) \ar[dd]^-{\cong} \ar[r]^-{\theta} & p_! (X^{p^* A}) \ar[dd] \\
p_*((X^I)^{p^* A}) \ar[d]_-{\cong} \\
(p_* (X^I))^A  \ar[rr]<+.5ex>^-{(p_* \ev_0)^A} \ar[rr]<-.5ex>_-{(p_* \ev_1)^A} && (p_* X)^A  \ar[r]^-{\theta^A} & (p_! X)^A \\
}$$
commutes in the evident sense and the left and middle vertical maps are iso.
Therefore, the bottom fork is a coequalizer   if and only if the right vertical map is an iso.
\end{proof}

   We now concentrate on powerful coequalizers in the topos $\Set$ of sets and functions.
Fix a coequalizer ${q e_0 = q e_1}$ as above.    Let us write ${\rightsquigarrow}$ for the relation on ${D}$ given by  the image of ${\twopl{e_0}{e_1}:E \rightarrow D\times D}$.  Write ${\sim}$ for the reflexive and symmetric closure of $\rightsquigarrow$.
   
   Let ${\Nat_\infty}$ be the usual poset of natural numbers extended with terminal object denoted by $\infty$. The {\em distance} from $x$ to $y$ in ${p_* X}$ is the least ${n \in \Nat_{\infty}}$ such that there are ${x_1, \ldots, x_n}$ such that ${x = x_0 \sim x_1 \sim x_2 \sim \ldots \sim x_{n-1} \sim x_n = y}$. The distance from $x$ to $y$ will be denoted by ${d(x, y)}$. For example, ${d(x, x) = 0}$. Also, ${q x = q y}$ if and only if ${d(x, y) < \infty}$.

\begin{lemma}\label{LemCharPowerfulCoeq}
For any coequalizer in $\Set$ as above the following are equivalent:
\begin{enumerate}
\item The coequalizer is powerful.
\item The next fork is a coequalizer
$$\xymatrix{
 E^\Nat \ar[r]<+.5ex>^-{e_0^\Nat} \ar[r]<-.5ex>_-{e_1^\Nat} & D^\Nat \ar[r]^-{q^\Nat} & Q^\Nat
}$$
\item (finite distances are bounded)  There exists an ${n \in \Nat}$ such that for every ${x, y \in D}$, ${q x = q y}$ implies ${d(x, y) \leq n}$.
\end{enumerate}
\end{lemma}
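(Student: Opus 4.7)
My plan is to prove the cyclic implications $1 \Rightarrow 2 \Rightarrow 3 \Rightarrow 1$. The first implication $1 \Rightarrow 2$ is immediate by specializing to $A = \Nat$. For $2 \Rightarrow 3$, I would argue by contrapositive: if condition 3 failed then for each $n$ one could pick $x_n, y_n \in D$ with $q x_n = q y_n$ and $d(x_n, y_n) > n$, and package them into $f, g \colon \Nat \to D$ with $q^\Nat f = q^\Nat g$. By 2, $(f, g)$ would then be related by a chain of some finite length $m$ in the equivalence relation generated by $(e_0^\Nat, e_1^\Nat)$; projecting this chain to coordinate $n$ produces a zigzag in $D$ of length at most $m$ from $x_n$ to $y_n$, which forces $d(x_n, y_n) \leq m$ and contradicts $d(x_n, y_n) > n$ once $n \geq m$.

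The core of the argument is $3 \Rightarrow 1$. Given a bound $n$ from 3, an arbitrary set $A$, and $f, g \colon A \to D$ with $q^A f = q^A g$, the goal is to exhibit a chain in $D^A$ relating $f$ and $g$, of some universal length and with a direction pattern uniform across $A$. For each $a \in A$ I can choose a zigzag $f(a) = z_0^a \sim z_1^a \sim \cdots \sim z_n^a = g(a)$ of length exactly $n$, but the direction of each step depends on $a$, so such zigzags cannot be assembled into a single chain in $D^A$ directly. The key structural ingredient is the family of reflexive witnesses provided by the connector: for each point $x \in D = p_* X$ the constant figure $I \to 1 \xrightarrow{x} X$ yields some $r_x \in E = p_*(X^I)$ with $e_0(r_x) = e_1(r_x) = x$. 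With these in hand I can (a) pad any zigzag of length smaller than $n$ up to length exactly $n$ by repeating endpoints, and (b) expand a single $+$-step $x \rightsquigarrow y$ into the length-$3$ chain $x, y, y, y$ and a single $-$-step $y \rightsquigarrow x$ into the length-$3$ chain $x, x, y, y$, each realized with the common direction pattern $(+, -, +)$. Assembling these blocks pointwise and using the axiom of choice to bundle the pointwise witnesses into functions $h_i \colon A \to E$ produces a chain in $D^A$ relating $f$ and $g$ of length $3n$ with direction pattern $(+, -, +)$ repeated $n$ times.

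The main obstacle is exactly the uniformization of direction across $A$ in the $3 \Rightarrow 1$ step. The length-$3$ trick is what makes this work: the reflexive witnesses from the connector allow every single zigzag step, regardless of its original direction, to be realized with the same uniform pattern, so a uniform bound on distances translates into a uniform bound on chain length together with a uniform direction pattern, simultaneously for all $a \in A$.
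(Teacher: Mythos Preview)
Your $1 \Rightarrow 2$ and $2 \Rightarrow 3$ are exactly the paper's arguments.

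For $3 \Rightarrow 1$ you are in fact \emph{more} careful than the paper. The paper simply chooses, for each $a\in A$, a $\sim$-chain $\vec{x}a \sim r_{a,1}\sim\cdots\sim r_{a,n}=\vec{y}a$, bundles these into $\vec{r_1},\ldots,\vec{r_n}\in D^A$, and asserts that ``it is easy to see that $\vec{x}\sim\vec{r_1}\sim\cdots\sim\vec{y}$'' in $D^A$. It does not address the direction problem you isolate: a single step $\vec{r_j}\sim^A\vec{r_{j+1}}$ requires one $h\colon A\to E$ with $e_0 h=\vec{r_j}$ and $e_1 h=\vec{r_{j+1}}$ (or the reverse), so a \emph{uniform} orientation across $A$ is needed, and the pointwise chains need not provide one. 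Your length-$3$ trick with the pattern $(+,-,+)$ is a clean way to force uniformity; a length-$2$ pattern $(+,-)$ already works, but that is cosmetic.

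The one thing to flag is that your fix imports structure that is not in the lemma's hypotheses as literally stated: you use the reflexive witnesses $r_x\in E$ with $e_0(r_x)=e_1(r_x)=x$, which come from the connector (constant figures $I\to 1\to X$). For an \emph{arbitrary} coequalizer $E\rightrightarrows D\to Q$ in $\Set$ such witnesses need not exist, and without them the implication $3\Rightarrow 1$ actually fails: take $E=\{*\}$, $D=\{a,b\}$, $e_0(*)=a$, $e_1(*)=b$; then condition~3 holds with $n=1$, but for $A=\{0,1\}$ the image of $(e_0^A,e_1^A)$ is the single pair $((a,a),(b,b))$, so $(a,b)$ and $(b,a)$ lie in singleton classes and the fork $E^A\rightrightarrows D^A\to Q^A$ is not a coequalizer. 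So your use of the connector is not just convenient but necessary; the lemma should be read (and is only applied) in that context, and your argument is the correct one there.
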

\begin{proof}
The first item  trivially implies the second. To prove that the second implies the third assume, for the sake of contradiction,  that distances are not bounded. Then, for every ${m \in \Nat}$ there are ${x_m, y_m \in D}$ such that ${d(x_m, y_m) \geq m}$. Then the indexed families ${\vec{x} = (x_m \mid m\in \Nat)}$ and ${\vec{y} = (y_m \mid m\in \Nat)}$ in ${D^{\Nat}}$ are such that ${q^{\Nat}\vec{x} = q^{\Nat} \vec{y}}$. As the fork in the second item is a coequalizer, ${d(\vec{x}, \vec{y}) < \infty}$. Then, for every ${m \in \Nat}$, ${d(x_m, y_m) \leq d(\vec{x}, \vec{y})}$. Absurd.
Finally, assume that the third item holds. We need to show that the following fork
$$\xymatrix{
 E^A \ar[r]<+.5ex>^-{e_0^A} \ar[r]<-.5ex>_-{e_1^A} & D^A \ar[r]^-{q^A} & Q^A
}$$
is a coequalizer, where $A$ is some set.
Let ${\vec{x}, \vec{y}}$ in ${D^A}$ be such that ${q^A \vec{x} = q^A \vec{y}}$ in ${Q^A}$.
Then ${q(\vec{x} a) = q(\vec{y} a)}$ in $Q$ for every ${a\in A}$. By hypothesis, there is an ${n\in \Nat}$ such that ${d(\vec{x} a, \vec{y} a) \leq n}$ for every ${a\in A}$. Hence, there are $
{r_{a, 1}, \ldots, r_{a, n}}$ in $D$ such that ${\vec{x} a \sim r_{a, 1} \sim \ldots r_{a, n} = \vec{y} a}$. For any ${1\leq j \leq n}$, we can consider the family $\vec{r_j}$ such that ${\vec{r_j} a = r_{j, a}}$ and it is easy to see that ${\vec{x} \sim \vec{r_1} \ldots \sim\vec{y}}$. 
\end{proof}

Lemmas~\ref{LemCharWKwithConnectors} and \ref{LemCharPowerfulCoeq} give a fairly concrete characterization of the weakly Kan objects in a pre-cohesive topos ${p:\calE \rightarrow \Set}$ equipped with a connector. For instance, in
the category $\widehat{\Delta_1}$ of reflexive graphs, a graph $G$ is weakly Kan if and only if there exists an $n\in\Nat$
with the property that, if any two nodes $x$ and $y$ belong to the same connected component of $G$, then there is a path
(of back and forth arrows) of length at most $n$ that connects $x$ and $y$. In the category $\widehat{\Delta}$ of simplicial
sets, an object is weakly Kan if and only if its underlying reflexive graph (its 1-skeleton) is weakly Kan in 
$\widehat{\Delta_1}$. This characterization of weakly Kan objects in simplicial sets benefited from discussions with
Luis Turcio.

Lemma~\ref{LemCharWKwithConnectors} also suggests the following interesting sufficient condition 
in a more general context.
   Recall that a fork as below (in a category with pullbacks)
\[
\xy
(0,0)*+{A}="1",
(15,0)*+{B}="2",
(30,0)*+{C}="3",
\POS "1" \ar@<3pt>^{e_0} "2",
\POS "1" \ar@<-3pt>_{e_1} "2",
\POS "2" \ar^{e} "3",
\endxy
\]
is {\em exact} if it is a coequalizer and a pullback. The same fork will be called {\em quasi-exact} if it is a coequalizer and the induced morphism ${A\rightarrow \ker e}$ is regular epi, where ${\ker e}$ is the kernel pair of $e$. 
   Any regular functor (between regular categories) preserves exact forks. See, for example, Section~{A1.3} in \cite{elephant}. It is easy to check that regular functors also preserve quasi-exact forks.

   Let ${p:\calE \rightarrow \Set}$ be a pre-cohesive topos equipped with a connector ${0, 1:1 \rightarrow I}$.
An object $X$ in $\calE$ will be called {\em navigable} if the fork
\[
\xymatrix{
p_* (X^I) \ar[rr]<+.5ex>^-{p_* \ev_0} \ar[rr]<-.5ex>_-{p_* \ev_1} && p_* X \ar[r]^-{\theta} & p_! X \\
}
\]
is quasi-exact. Intuitively, $X$ is navigable if, assuming that you can move from $a$ to $b$ in $X$ and also from $b$ to $c$, then there is also a way to move from $a$ to $c$.

   An object $A$ in a topos $\calS$ is {\em internally projective} if the right adjoint ${\Pi_A:\calS/A \rightarrow \calS}$ is a regular functor or, equivalently, ${(\_)^A:\calE \rightarrow\calE}$ preserves epis. The topos $\calS$ satisfies the {\em internal axiom of choice} (IAC) if every object $A$ is internally projective.

\begin{proposition}\label{PropNavigableImpliesWK}
If $\calS$ satisfies IAC and $X$ in $\calE$ is navigable then $X$ is weakly Kan.
\end{proposition}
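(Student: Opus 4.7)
The plan is to invoke Lemma~\ref{LemCharWKwithConnectors}, which characterizes weakly Kan objects as precisely those $X$ for which the fork
\[
p_*(X^I) \rightrightarrows p_*X \xrightarrow{\theta} p_!X
\]
is a powerful coequalizer in $\calS$. Since $X$ is navigable, this fork is quasi-exact by hypothesis, so the task reduces to showing that, in a topos satisfying IAC, a quasi-exact fork stays a coequalizer after applying $(-)^A$ for every object $A$ of $\calS$.

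The key observation is that IAC on $\calS$ means each exponentiation functor $(-)^A \colon \calS \to \calS$ preserves epis; being a right adjoint (to $-\times A$) it also preserves all finite limits, and hence it is a regular functor. As noted in the discussion just before the proposition, regular functors preserve quasi-exact forks, so applying $(-)^A$ to the navigability fork yields a quasi-exact fork
\[
(p_*(X^I))^A \rightrightarrows (p_*X)^A \xrightarrow{\theta^A} (p_!X)^A,
\]
which is in particular a coequalizer. That is precisely the condition for the original fork to be powerful, so Lemma~\ref{LemCharWKwithConnectors} delivers that $X$ is weakly Kan.

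The only real obstacle is bookkeeping around the preservation of quasi-exact forks by $(-)^A$: one must verify that $\theta^A$ is regular epi (automatic, since $\theta$ is regular epi as a coequalizer in a topos and $(-)^A$ preserves regular epis under IAC) and that the canonical comparison $(p_*(X^I))^A \to \ker(\theta^A)$ is regular epi (using left-exactness to identify $\ker(\theta^A)$ with $(\ker\theta)^A$, and then epi-preservation). Once these two facts are established, the coequalizer property follows from the standard observation that if a map to the kernel pair of a regular epi is itself regular epi, then the original pair and the kernel pair determine the same coequalizer.
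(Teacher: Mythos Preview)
Your proposal is correct and follows essentially the same route as the paper: both invoke Lemma~\ref{LemCharWKwithConnectors} and argue that under IAC the quasi-exact fork is powerful. The paper's proof is a one-liner (``since $\calS$ satisfies IAC the coequalizer in question is powerful''), while you spell out the mechanism---that $(-)^A$ is regular under IAC and regular functors preserve quasi-exact forks---which is exactly the content the paper leaves implicit (and had already flagged in the paragraph defining quasi-exact forks).
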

\begin{proof}
Follows from Lemma~\ref{LemCharWKwithConnectors} because since $\calS$ satisfies IAC the coequalizer in question is powerful.
\end{proof}

   Consider for example the pre-cohesive topos of simplicial sets equipped the connector described in Section~\ref{SecPreCohesive}.
   
\begin{corollary}
Every Kan complex is weakly Kan for ${p:\Psh{\Delta} \rightarrow \Set}$.
\end{corollary}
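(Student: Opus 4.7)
The plan is to derive this from Proposition~\ref{PropNavigableImpliesWK}: since $\Set$ satisfies IAC, it suffices to show that every Kan complex $X$ is navigable with respect to the connector $I = \Delta(-,[1])$ described in Section~\ref{SecPreCohesive}. I would first unpack the fork of Lemma~\ref{LemCharWKwithConnectors} in concrete simplicial terms. Since $p_* Y = Y_0$, and the $0$-simplices of $X^I$ are maps $\Delta^0 \times \Delta^1 \to X$, which are just the $1$-simplices of $X$, the fork becomes
\[
X_1 \xrightarrow{d_1,\; d_0} X_0 \longrightarrow \pi_0 X,
\]
where the two parallel arrows are the source/target face maps (induced by the two points $0,1\colon 1 \to I$ of the connector) and the coequalizer sends a vertex to its path-component.

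Navigability then demands that the induced map $X_1 \to \ker(\theta) \subseteq X_0 \times X_0$, sending $\sigma \mapsto (d_1\sigma,\, d_0\sigma)$, be surjective; equivalently, any two vertices $x,y$ in the same path-component must already be connected by an honest $1$-simplex $\sigma$ with $d_1\sigma = x$ and $d_0\sigma = y$. This is exactly where the Kan filling condition enters. The key step is to verify that the relation $x \rightsquigarrow y$ defined by ``there exists such a $\sigma$'' is already an equivalence relation on $X_0$: reflexivity via the degenerate edge $s_0 x$; symmetry by filling a horn $\Lambda^2_0 \to X$ whose two present faces are $s_0 x$ (placed at $d_1$) and $\sigma$ (placed at $d_2$), whose missing face $d_0$ is then an edge $y \to x$; and transitivity by the usual composition of composable edges via filling $\Lambda^2_1 \to X$ with $\sigma$ at $d_2$ and $\tau$ at $d_0$.

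Since the path-component relation is, by construction, the reflexive-symmetric-transitive closure of $\rightsquigarrow$ (cf.\ the discussion preceding Lemma~\ref{LemCharPowerfulCoeq}), and $\rightsquigarrow$ has just been shown to be already closed under these operations, the two relations coincide. Hence $X_1 \to \ker(\theta)$ is surjective, $X$ is navigable, and Proposition~\ref{PropNavigableImpliesWK} yields the conclusion.

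I do not anticipate a serious obstacle here. The only delicate bookkeeping is identifying the fork correctly (pinning down which face map corresponds to which evaluation $\ev_i$) and choosing the correct horns for symmetry and transitivity. The conceptual point is that the Kan condition forces the generating relation of $\pi_0$ to already be an equivalence relation on the nose, which is precisely the content of navigability in this setting.
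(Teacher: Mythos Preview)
Your proposal is correct and follows essentially the same route as the paper: reduce to Proposition~\ref{PropNavigableImpliesWK}, identify the connector fork with $X_1 \rightrightarrows X_0 \to p_! X$, and show navigability by proving the edge relation on a Kan complex is already an equivalence relation. The only difference is that the paper delegates this last point to a citation (Joyal--Tierney), whereas you spell out the horn-filling arguments for reflexivity, symmetry, and transitivity directly; your horn choices are correct.
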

\begin{proof}
Follows from Proposition~\ref{PropNavigableImpliesWK} and  \cite{JoyalTierneyQuad47}. Briefly, every Kan complex is navigable.
In more detail, for every simplicial set $X$, ${p_* (X^I) = \Psh{\Delta}(1, X^I) = \Psh{\Delta}(I, X) = X [1]}$. In the notation of \cite{JoyalTierneyQuad47}, ${p_* (X^I) = X_1}$.
Similarly, ${p_* X = X_0}$. So the fact that ${0, 1:1 \rightarrow I}$ is a connector for ${p:\Psh{\Delta}\rightarrow \Set}$ coincides with the fact stressed at the beginning of Section~{3.2} loc.\ cit., namely, that the following diagram
$$\xymatrix{
X_1 \ar@<+.5ex>[r]^-{d^0} \ar@<-.5ex>[r]_-{d^1} & X_0 \ar[r] & p_! X
}$$
is a coequalizer for every $X$ in $\Psh{\Delta}$. Joyal and Tierney denote the induced relation on $X_0$ by $\sim$ and show that if $X$ is a Kan complex then $\sim$ is an equivalence relation. But this simply means that the fork above is quasi-exact. That is, $X$ is navigable.
\end{proof}

\section{Geometric morphisms that preserve pieces}
\label{SecPiecePreservingAdjunctions}

  As suggested in the introduction, Milnor's geometric realization interpreted by Joyal, Johnstone and Lawvere using toposes leads to the idea of a geometric morphism ${g:\calF \rightarrow \calE}$ over a base $\calS$ as in the diagram on the left below
$$\xymatrix{
\calF \ar[rd]_-f \ar[r]^-g & \calE \ar[d]^-p && \calF \ar[rd]_-f \ar[r]^-g & \Psh{\Delta} \ar[d]^-p\\
 & \calS && & \Set
}$$
where $f$ is cohesive and $\calE$ is a `combinatorial' pre-cohesive topos. Proposition~{10.6} in \cite{Menni2014a} shows a concrete example by making explicit a cohesive topos ${f:\calF \rightarrow\Sets}$ of `piecewise linear spaces' and a geometric morphism ${g:\calF \rightarrow \Psh{\Delta}}$ such that the composite of the subtopos ${f_* \dashv f^!:\Set\rightarrow \calF}$ followed by ${g:\calF \rightarrow \Psh{\Delta}}$ is the geometric morphism ${\Set \rightarrow \Psh{\Delta}}$ whose inverse image sends each simplicial set $X$ to the underlying set of Milnor's geometric realization of $X$. The result cited above highlights a natural iso ${\lambda:p_! g_* \rightarrow f_!}$ formalizing the idea that ${g_*:\calF \rightarrow \Psh{\Delta}}$ preserves pieces. We will show that this example is an instance of a more general notion of a pieces-preserving geometric morphism ${g:\calE \rightarrow \calF}$.

   As mentioned above, our main interest is in geometric morphisms, but for many of the calculations only the most basic facts about adjunctions and finite products are needed, so we start with a very basic setting and add hypotheses as they are needed.

\subsection{The preservation of indexed coproducts}

   Let $\calE$ and $\calS$ be categories and let ${p^* \dashv p_*:\calE \rightarrow \calS}$ be an adjunction. 
   Let ${g^* \dashv g_*:\calF \rightarrow \calE}$ be another adjunction (denoted also by ${g:\calF \rightarrow \calE}$) and let ${f:\calF \rightarrow \calS}$ be the composite adjunction, so that ${f_* = p_* g_*:\calF \rightarrow \calS}$ and  ${f^* = g^* p^*:\calS \rightarrow \calE}$.    We picture the situation as follows
$$\xymatrix{
\calF \ar[rd]_-f \ar[r]^-g & \calE \ar[d]^-p \\
 & \calS
}$$
with $f$ `over' $p$, and we devise a notation that explicitly relates the unit and counit of $p$ with that of $f$.

   The unit and counit of $p$ will be denoted by ${\alpha:1_{\calS} \rightarrow p_* p^*}$ and ${\beta:p^* p_* \rightarrow 1_{\calE}}$. `Over' it we denote the unit and counit of $f$ by ${\oalpha:1_{\calS} \rightarrow f_* f^*}$ and ${\obeta:f^* f_* \rightarrow 1_{\calE}}$ respectively. In order to relate these natural transformations we introduce a different name for the unit and counit of $g$.

We denote the unit and counit of $g$ by ${\nu:1_{\calE} \rightarrow g_* g^*}$ and ${\xi:g^* g_* \rightarrow 1_{\calF}}$ respectively. It is well known that the unit and counit for $f^*\dashv f_*$ may be defined as the following composites
$$\xymatrix{
1_{\calS} \ar[r]^-{\alpha} & p_* p^* \ar[r]^-{p_* \nu_{p^*}} & p_* g_* g^* p^* = f_* f^* && 
  f^* f_* = g^* p^* p_* g_* \ar[r]^-{g^* \beta_{g_*}} & g^* g_* \ar[r]^-{\xi} & 1_{\calF}
}$$
so ${\oalpha = (p_* \nu_{p^*}) \alpha:1_{\calS} \rightarrow f_* f^*}$ and ${\obeta = \xi (g^* \beta_{g_*}):f^* f_* \rightarrow 1_{\calF}}$.

   (Notice that if $f^*$ and $p^*$ are fully faithful then ${p_* \nu_{p^*}}$ is forced to be an iso.)

   The next concept is well known and plays a relevant role here.

\begin{definition}\label{DefPreservationOfSindexedCopros} We say that ${g_*:\calF \rightarrow \calE}$  {\em preserves $\calS$-indexed coproducts} if the natural transformation 
${\nu_{p^*}:p^* \rightarrow g_* g^* p^* =g_* f^*}$ is an iso.  
\end{definition}

   When ${f^*:\calS \rightarrow \calF}$ and ${p^*:\calS \rightarrow \calE}$ are the discrete inclusions of the base $\calS$ into the respective (pre-)cohesive toposes, then preservation of $\calS$-indexed coproducts formalizes the idea that ${g_*:\calF \rightarrow \calE}$ preserves discrete spaces.

   Let us say that an object is {\em indecomposable} if it has exactly two complemented subobjects. In a Grothendieck topos $\calF$, an object $Z$  is  indecomposable if and only if ${\calF(Z, \_):\calF \rightarrow \Set}$ preserves coproducts. If the canonical ${f:\calF \rightarrow \Set}$ is essential then, $Z$ is indecomposable if and only if ${f_! Z = 1}$. In particular, if ${f}$ is pre-cohesive then $Z$ is indecomposable if and only if $Z$ is connected.

\begin{lemma}\label{lemma1.4} Let $\calC$ be a small category and let ${p:\Psh{\calC} \rightarrow \Set}$ be the associated presheaf topos. Let ${g:\calF \rightarrow \Psh{\calC}}$ be a geometric morphism. If for every $C$ in $\calC$, ${g^*(\calC(\_, C))}$ is indecomposable then ${g_*:\calF \rightarrow \Psh{\calC}}$ preserves $\Set$-indexed coproducts.
\end{lemma}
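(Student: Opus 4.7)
The plan is to show that for each set $A$, the component $\nu_{p^*A} : p^*A \to g_*g^*p^*A = g_*f^*A$ is an isomorphism in $\Psh{\calC}$, checked pointwise at each representable. Using the standard description $(g_*X)(C) \cong \calF(g^*\calC(-,C), X)$, the evaluation at $C$ of the target becomes $\calF(g^*\calC(-,C), f^*A)$, while $(p^*A)(C) = A$.

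Next I would simplify $f^*A$. The object $p^*A$ in $\Psh{\calC}$ is the constant presheaf at $A$, equivalently the $A$-indexed copower of the terminal presheaf, so $p^*A \cong \coprod_{a\in A} 1$. Since $f^* = g^*p^*$ is a left adjoint it preserves coproducts, giving $f^*A \cong \coprod_{a\in A} 1_\calF$.

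The hypothesis now enters: since $g^*\calC(-,C)$ is indecomposable, $\calF(g^*\calC(-,C), -)$ preserves coproducts, so
\[
(g_* f^* A)(C) \;\cong\; \calF\bigl(g^*\calC(-,C), \coprod_{a\in A} 1_\calF\bigr) \;\cong\; \coprod_{a\in A} \calF(g^*\calC(-,C), 1_\calF) \;\cong\; A,
\]
where the last step uses that $1_\calF$ is terminal. This matches $(p^*A)(C) = A$, so both sides are in bijection.

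Finally, chasing the definition of the unit, $\nu_{p^*A}$ at stage $C$ sends $x \in (p^*A)(C) = A$ to the morphism $g^*\calC(-,C) \to f^*A$ obtained by applying $g^*$ to the Yoneda transpose $\hat{x} : \calC(-,C) \to p^*A$ of $x$. Since $p^*A$ is constant, $\hat{x}$ is itself constant at $x$, so its $g^*$-image factors through the $x$-th summand of $\coprod_{a\in A} 1_\calF$; under the identification of the third step this is precisely the element $x \in A$. Hence $\nu_{p^*A}$ is pointwise the identity, and in particular an isomorphism. I expect the main (mild) obstacle to be the Yoneda bookkeeping in this last step, needed to confirm that the bijection produced in the third step is really equal to $\nu_{p^*A}$ and not merely some a priori different isomorphism between two $A$-indexed sets.
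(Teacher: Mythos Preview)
Your argument is correct and follows essentially the same route as the paper: both use the Yoneda/adjunction identification ${(g_* X)(C)\cong \calF(g^*\calC(-,C),X)}$ and then invoke indecomposability to pull the coproduct out. The only difference is cosmetic: the paper shows more generally that $g_*$ preserves \emph{all} $\Set$-indexed coproducts in $\calF$ and then asserts the conclusion, whereas you specialize immediately to copowers of $1$ and take extra care to identify the resulting bijection with the component of $\nu_{p^*A}$ itself.
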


\proof By Yoneda and the adjunction $g^*\dashv g_*$ 
we have that, for every $X$ in $\calF$ and $C$ in $\calC$,  ${(g_* X) C \simeq \calF(g^*(\calC(-, C)), X)}$. In particular, for every indexed family ${(X_i \mid i\in I)}$ of objects in $\calF$, 
$(g_*(\sum_{i\in I}X_i)) C =\calF(g^*(\calC(-,C)),\sum_{i\in I}X_i)$. So, by hypothesis, 
\[
\bigg(g_*\bigg(\sum_{i\in I}X_i\bigg)\bigg)C \simeq \sum_{i\in I}\calF(g^*(\calC(-,C)),X_i)\simeq 
\sum_{i\in I}(g_*X_i)C=\bigg(\sum_{i\in I}g_*X_i\bigg)C
\]
which implies that $g_*$ preserves $\Set$-indexed coproducts.
\endproof

   As the referee has observed, the converse of Lemma~\ref{lemma1.4} holds in more generality: if $g_*$ preserves coproducts then $g^*$ preserves indecomposables.

\subsection{Assuming the existence of `pieces' functors}

   Assume from now on that both ${f^*:\calS \rightarrow \calF}$ and ${p^*:\calS \rightarrow \calE}$ have left adjoints denoted by ${f_!:\calF \rightarrow \calS}$ and ${p_!:\calE \rightarrow \calS}$ respectively. Denote the unit and counit of ${p_! \dashv p^*}$ by ${\sigma:1_{\calE} \rightarrow p^* p_!}$ and ${\tau:p_! p^* \rightarrow 1_{\calS}}$. Naturally, to maintain the consistency of our notation, we denote the unit and counit of ${f_! \dashv f^*}$ by ${\osigma:1_{\calF} \rightarrow f^* f_!}$ and ${\otau:f_! f^* \rightarrow 1_{\calS}}$.
   
   Before introducing the next piece of notation we stress that we are not assuming that ${g_*:\calF \rightarrow \calE}$ preserves $\calS$-indexed coproducts.

\begin{definition}[The canonical ${\varrho:f_! g^* \rightarrow p_!}$]  The transformation ${\nu_{p^*}:p^* \rightarrow g_* g^* p^* = g_* f^*}$ has a mate ${\varrho:f_! g^* \rightarrow p_!}$ which is defined by the pasting
$$\xymatrix{
\calE \ar@/_1pc/[rd]_-{id} \ar[r]^-{p_!} & \calS \ar[d]|-{p^*} \ar[r]^-{id} & \calS \ar[d]|-{g_* f^*} \ar@/^1pc/[rd]^-{id} & \\
\ar@{}[ru]|{\Rightarrow} & \calE \ar@{}[ru]|{\Rightarrow} \ar[r]_-{id} & \calE \ar@{}[ru]|{\Rightarrow} \ar[r]_-{f_! g^*} & \calS 
}$$
In other words, ${\varrho:f_! g^* \rightarrow p_!}$ is the composite
$$\xymatrix{
f_! g^* \ar[r]^-{f_! g^* \sigma} & f_! g^* p^* p_! \ar@/_1pc/[rrrrr]_-{id} \ar[rr]^-{f_! g^* (\nu_{p^* p_!})} && f_! g^* g_* g^* p^* p_!  \ar[r]^-{=} & 
f_! g^* g_* f^* p_! \ar[rr]^-{f_! (\xi_{f^* p_!})} & & f_! f^* p_! \ar[r]^-{\otau_{p_!}} & p_! 
}$$
or, more efficiently, $\xymatrix{f_! g^* \ar[r]^-{f_! g^* \sigma} & f_! g^* p^* p_! = f_! f^* p_! 
\ar[r]^-{\otau_{p_!}} & p_!}$. 
\end{definition}

The relation of the arrow $\varrho\pts f_!g^*\to p_!$ with preservation of $\calS$-indexed coproducts may be expressed as follows.
   
\begin{lemma}\label{LemIndexedCoprosAndPreservationOfPieces} The following are equivalent:
\begin{enumerate}
\item The functor ${g_*:\calF \rightarrow \calE}$ preserves $\calS$-indexed coproducts.
\item The transformation $\varrho\pts f_!g^*\to p_!$ is an isomorphism.
\item there exists a natural transformation ${\rho:p_! \rightarrow f_! g^*}$ such that the following diagrams 
$$\xymatrix{
g^* \ar[rrd]_-{\osigma_{g^*}} \ar[r]^-{g^* \sigma} & g^* p^* p_! \ar[r]^-{id} & f^* p_! \ar[d]^-{f^* \rho} && 
   p_! p^* \ar[rrd]_-{\tau}  \ar[r]^-{\rho_{p^*}} & f_! g^* p^* \ar[r]^-{id} & f_! f^* \ar[d]^-{\otau} \\
 & & f^* f_! g^* && & & 1_{\calS}
}$$
commute. 
\end{enumerate}
Moreover, in this case, $\rho$ is inverse to $\varrho$.
\end{lemma}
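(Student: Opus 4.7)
My plan is to recognize $\varrho$ as the mate of $\nu_{p^*}$ under the adjunctions $p_!\dashv p^*$ and $f_!g^*\dashv g_*f^*$, the latter being the composite of $g^*\dashv g_*$ with $f_!\dashv f^*$. A direct check---after cancelling the factor $f_!\xi_{f^*p_!}\circ f_!g^*\nu_{p^*p_!}$ via the triangle identity for $g^*\dashv g_*$---shows that the efficient formula $\varrho=\otau_{p_!}\circ f_!g^*\sigma$ agrees with the standard mate expression. Since the mate correspondence is a bijection preserving isomorphism, this immediately gives $(1)\Leftrightarrow(2)$.

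Next I would establish $(2)\Leftrightarrow(3)$ by showing that the two diagrams in (3) are equivalent to $\rho\circ\varrho=\mathrm{id}$ and $\varrho\circ\rho=\mathrm{id}$ respectively. Assuming (3), to compute $\rho_X\circ\varrho_X$ I would substitute the efficient form of $\varrho_X$ and pull $\rho_X$ through $\otau$ by naturality, obtaining $\otau_{f_!g^*X}\circ f_!(f^*\rho_X\circ g^*\sigma_X)$; the first diagram rewrites the bracketed composite as $\osigma_{g^*X}$, and the triangle identity for $f_!\dashv f^*$ collapses the expression to $\mathrm{id}_{f_!g^*X}$. Symmetrically, for $\varrho_X\circ\rho_X$ I would pull $\rho_X$ through $f_!g^*\sigma_X$ by naturality to obtain $(\otau\circ\rho_{p^*})_{p_!X}\circ p_!\sigma_X$; the second diagram replaces the first factor by $\tau_{p_!X}$, and the triangle identity for $p_!\dashv p^*$ yields $\mathrm{id}_{p_!X}$. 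This proves $(3)\Rightarrow(2)$ together with the moreover clause.

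For the converse $(2)\Rightarrow(3)$, I would set $\rho:=\varrho^{-1}$ and read the previous computations backwards. The identity $\rho\circ\varrho=\mathrm{id}$ translates into $\otau_{f_!g^*X}\circ f_!(f^*\rho_X\circ g^*\sigma_X)=\mathrm{id}_{f_!g^*X}$, which by the adjoint bijection of $f_!\dashv f^*$ forces $f^*\rho_X\circ g^*\sigma_X=\osigma_{g^*X}$, i.e.\ the first diagram. For the second diagram I would compute $\tau_A\circ\varrho_{p^*A}$ directly: naturality of $\otau$ along $\tau_A$ rewrites it as $\otau_A\circ f_!(f^*\tau_A\circ g^*\sigma_{p^*A})$, and the bracketed composite is $g^*(p^*\tau_A\circ\sigma_{p^*A})=\mathrm{id}_{f^*A}$ by the triangle identity for $p_!\dashv p^*$ (whiskered by $g^*$, using $g^*p^*=f^*$); hence $\tau_A\circ\varrho_{p^*A}=\otau_A$, and composing on the right with $\rho_{p^*A}$ yields the desired $\otau_A\circ\rho_{p^*A}=\tau_A$.

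The only obstacle is bookkeeping: three adjunctions are in play, each contributing its own unit and counit, and one must consistently apply the identification $g^*p^*=f^*$ so that the naturality and triangle manipulations compose cleanly. Otherwise everything reduces to standard mate calculus.
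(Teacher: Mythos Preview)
Your proposal is correct and follows essentially the same route as the paper: the equivalence $(1)\Leftrightarrow(2)$ via the mate (``conjugation'') correspondence, and the equivalence with $(3)$ by showing that any $\rho$ satisfying the two triangles is a two-sided inverse to $\varrho$, using precisely the two naturality-plus-triangle-identity diagrams you describe. The only difference is that you supply explicit detail for $(2)\Rightarrow(3)$ (in particular the clean observation $\tau\circ\varrho_{p^*}=\otau$), where the paper simply says ``it is easy to check that $\varrho^{-1}$ satisfies the conditions''.
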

\begin{proof}
The first two items are equivalent because conjugation preserves isos.
If the second item holds then it is easy to check that ${\varrho^{-1}:p_! \rightarrow f_! g^*}$  
satisfies the conditions required by the third item.
 To complete the proof it is enough to show that any $\rho$ as in the third item  is actually inverse to $\varrho$. The following diagram
$$\xymatrix{
f_! g^*  \ar[rrd]_-{f_! \osigma_{g^*}} \ar[r]^-{f_! g^* \sigma} & f_! g^* p^* p_! \ar[r]^-{id} & f_! f^* p_! \ar[d]^-{f_! f^* \rho} \ar[r]^-{\otau} & p_! \ar[d]^-{\rho} \\ 
& & f_! f^* f_! g^* \ar[r]_-{\otau_{f_! g^*}} & f_! g^*
}$$
shows that ${\varrho}$ is a section of $\rho$. On the other hand, the next one
$$\xymatrix{
p_! \ar[d]_-{p_! \sigma} \ar[r]^-{\rho} & f_! g^* \ar[r]^-{f_! g^* \sigma} & f_! g^* p^* p_! \ar[r]^-{id} & f_! f^* p_! \ar[d]^-{\otau_{p_!}} \\
p_! p^* p_! \ar[rru]_-{\rho_{p^* p_!}} \ar[rrr]_-{\tau_{p_!}} & & & p_! 
}$$
shows that ${\rho}$ is a section of ${\varrho}$.
\end{proof}

Thus, in this restricted context, the concept of ${g_*:\calF \rightarrow \calE}$ preserving $\calS$-indexed coproducts can be reformulated by the idea, with `pieces' functors, that $g^*$ `preserves pieces'.

\begin{definition}
We say that ${p_!}$ {\em inverts the unit of $g$} if the natural ${p_! \nu:p_! \rightarrow p_! g_* g^*}$ is an iso. 
   Similarly,  we say that ${f_!}$ {\em inverts the counit of $g$} if the natural ${f_! \xi:f_! g^* g_* \rightarrow f_!}$ is an iso.
\end{definition}

We will need the following lemma regarding these concepts.

\begin{lemma}\label{LemRealizationPreservesPiecesAndInversionOfCounitImpliesInversionOfUnit}
If ${g_*:\calF \rightarrow\calE}$ preserves $\calS$-indexed coproducts, then ${p_!:\calE \rightarrow \calS}$ inverts the unit of $g$ if and only if ${f_! \xi_{g^*}:f_! g^* g_* g^* \rightarrow f_! g^*}$ is an iso.
Therefore, if ${g_*:\calF \rightarrow \calE}$ preserves $\calS$-indexed coproducts and $f_!$ inverts the counit of $g$ then  $p_!$ inverts the unit of $g$.
\end{lemma}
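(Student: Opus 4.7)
The strategy is to use the preservation of $\calS$-indexed coproducts (equivalently, by Lemma~\ref{LemIndexedCoprosAndPreservationOfPieces}, the invertibility of $\varrho\pts f_! g^* \to p_!$) to transport the condition ``$p_!\nu$ is iso'' to one involving $f_!g^*\nu$, and then recognize the latter as the section, under $f_!$, of the counit $\xi_{g^*}$.

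First, I would record the consequence of the triangle identity $\xi_{g^*}\circ g^*\nu = \mathrm{id}_{g^*}$: applying $f_!$ gives
$$
\xymatrix{
f_! g^* \ar[r]^-{f_! g^* \nu} & f_! g^* g_* g^* \ar[r]^-{f_! \xi_{g^*}} & f_! g^*
}
$$
equal to the identity of $f_! g^*$. Hence $f_! g^*\nu$ is a section of $f_! \xi_{g^*}$, so one of the two natural transformations is an isomorphism if and only if the other is.

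Next, I would invoke naturality of $\varrho\pts f_! g^* \to p_!$ against the components of $\nu\pts 1_\calE \to g_* g^*$, which yields the commutative square
$$
\xymatrix{
f_! g^* \ar[d]_-{f_! g^* \nu} \ar[r]^-{\varrho} & p_! \ar[d]^-{p_! \nu} \\
f_! g^* g_* g^* \ar[r]_-{\varrho_{g_* g^*}} & p_! g_* g^*.
}
$$
Under the hypothesis that $g_*$ preserves $\calS$-indexed coproducts, Lemma~\ref{LemIndexedCoprosAndPreservationOfPieces} tells us that both $\varrho$ and $\varrho_{g_*g^*}$ are isomorphisms. Therefore the square shows that $p_!\nu$ is an iso if and only if $f_! g^* \nu$ is, and combining with the previous paragraph we conclude that $p_!\nu$ is an iso if and only if $f_!\xi_{g^*}$ is. This proves the first assertion.

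For the ``therefore'' clause I would simply observe that $f_!\xi_{g^*}$ is the whiskering of $f_!\xi\pts f_! g^* g_* \to f_!$ by $g^*$; so if $f_!$ inverts the counit of $g$, then in particular $f_!\xi_{g^*}$ is an iso, and the first part of the lemma yields that $p_!$ inverts the unit of $g$. The main subtlety is keeping track of the whiskering conventions so that the naturality square and the triangle identity fit together cleanly; once that bookkeeping is done the proof is essentially a diagram-chase of three small squares.
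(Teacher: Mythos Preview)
Your proof is correct and follows essentially the same approach as the paper: the paper also draws the naturality square for $\varrho$ against $\nu$, uses invertibility of $\varrho$ to conclude that $p_!\nu$ is an iso iff $f_! g^*\nu$ is, and then notes (via the triangle identity, which you make explicit) that the latter is equivalent to $f_!\xi_{g^*}$ being an iso. Your treatment of the ``therefore'' clause is likewise the intended one.
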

\begin{proof}
The following diagram 
$$\xymatrix{
f_! g^* \ar[d]_-{f_! g^* \nu} \ar[r]^-{\varrho} & p_! \ar[d]^-{p_! \nu} \\
f_! g^* g_* g^* \ar[r]_-{\varrho} & p_! g_* g^*
}$$
commutes by naturality. So, if ${\varrho}$ is  invertible then, one of the vertical maps is iso if and only if the other one is. Finally, notice that the left vertical map is an iso if and only if ${f_! \xi_{g^*}:f_! g^* g_* g^* \rightarrow f_! g^*}$ is an iso.
\end{proof}

   The following is one of the main definitions in the paper.

\begin{definition}\label{DefGpp}
We will say that the adjunction ${g:\calF \rightarrow \calE}$  {\em preserves pieces} if ${g_*:\calF \rightarrow \calE}$ preserves $\calS$-indexed coproducts and the composite
$$\xymatrix{
p_! g_* \ar[r]^-{\rho_{g_*}} & f_! g^* g_* \ar[r]^-{f_! \xi} & f_!
}$$
is an iso.
\end{definition}

Let us summarize the above as follows.

\begin{proposition}\label{PropObviousCharOfGpp}
If ${g_*:\calF \rightarrow \calE}$ preserves $\calS$-indexed coproducts then the following are equivalent:
\begin{enumerate}
\item The adjunction ${g:\calF \rightarrow \calE}$ preserves pieces.
\item The functor ${f_!:\calF \rightarrow \calS}$ inverts the counit of $g$.
\item There exists a natural iso ${\lambda:p_! g_* \rightarrow f_!}$ such that the following triangle
$$\xymatrix{
f_! g^* g_* \ar[rd]_-{f_! \xi} \ar[r]^-{\varrho_{g_*}} & p_! g_* \ar[d]^-{\lambda}  \\
 & f_! 
}$$
commutes. 
\end{enumerate}
Moreover, in this case, ${\lambda = (f_! \xi) (\rho_{g_*})}$, and ${p_!:\calE \rightarrow \calS}$ inverts the unit of $g$.
\end{proposition}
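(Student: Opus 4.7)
The plan is to reduce everything to the two preparatory lemmas already on the page. Under the standing assumption that $g_*$ preserves $\calS$-indexed coproducts, Lemma~\ref{LemIndexedCoprosAndPreservationOfPieces} guarantees that $\varrho\pts f_! g^*\to p_!$ is an isomorphism with a two-sided inverse $\rho\pts p_!\to f_! g^*$. Whiskering by $g_*$, both $\varrho_{g_*}$ and $\rho_{g_*}$ are mutually inverse isos as well; this is the only substantive input I need.

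First I would dispose of (1)$\Leftrightarrow$(2). By Definition~\ref{DefGpp}, preservation of pieces means that the composite $(f_!\xi)\circ \rho_{g_*}\pts p_! g_*\to f_!$ is invertible. Since $\rho_{g_*}$ is already invertible, this composite is an iso if and only if $f_!\xi\pts f_! g^* g_*\to f_!$ is an iso, which is precisely the statement that $f_!$ inverts the counit of $g$.

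Next I would treat (1)$\Leftrightarrow$(3). For the forward direction, set $\lambda:=(f_!\xi)\circ \rho_{g_*}$, an iso by hypothesis. Then
\[
\lambda\circ \varrho_{g_*} \;=\; (f_!\xi)\circ \rho_{g_*}\circ \varrho_{g_*} \;=\; f_!\xi,
\]
so the required triangle commutes. Conversely, given any $\lambda$ as in (3), commutativity of the triangle together with invertibility of $\varrho_{g_*}$ forces $\lambda = (f_!\xi)\circ \varrho_{g_*}^{-1} = (f_!\xi)\circ \rho_{g_*}$; that this is an iso is then exactly condition (1). This also yields the explicit formula for $\lambda$ asserted in the ``moreover'' clause.

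Finally, the remaining claim that $p_!$ inverts the unit of $g$ is immediate from Lemma~\ref{LemRealizationPreservesPiecesAndInversionOfCounitImpliesInversionOfUnit}: preservation of $\calS$-indexed coproducts is assumed and inversion of $\xi$ by $f_!$ is provided by the (already established) equivalence with (2). Honestly, there is no genuine obstacle in this proposition; the work was done in Lemmas~\ref{LemIndexedCoprosAndPreservationOfPieces} and \ref{LemRealizationPreservesPiecesAndInversionOfCounitImpliesInversionOfUnit}, and the remaining content is bookkeeping. The only place where care is warranted is ensuring that the direction of $\rho$ versus $\varrho$ is tracked correctly in the triangle of item (3), since $\lambda$ is prescribed as a map $p_! g_*\to f_!$ while the defining composite in Definition~\ref{DefGpp} reads most naturally via $\rho_{g_*}$.
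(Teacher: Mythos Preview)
Your proof is correct and follows essentially the same approach as the paper: both arguments reduce everything to the invertibility of $\rho$ (equivalently $\varrho$) supplied by Lemma~\ref{LemIndexedCoprosAndPreservationOfPieces}, and then invoke Lemma~\ref{LemRealizationPreservesPiecesAndInversionOfCounitImpliesInversionOfUnit} for the final clause. The only cosmetic difference is that the paper organizes the equivalences as a cycle $1\Rightarrow 2\Rightarrow 3\Rightarrow 1$, whereas you prove two biconditionals directly.
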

\begin{proof}
The first item implies the second because (by Lemma~\ref{LemIndexedCoprosAndPreservationOfPieces}) $\rho$ is an iso. The second implies the third using the explicit definition of $\lambda$. The third implies the the first because $\rho$ is inverse to $\varrho$. Finally, $p_!$ inverts the unit of $g$ by Lemma~\ref{LemRealizationPreservesPiecesAndInversionOfCounitImpliesInversionOfUnit}.
\end{proof}

   We will give a sufficient condition for ${g:\calF \rightarrow \calE}$ to preserve pieces.
   For that, we need to understand the natural transformations ${p_! g_* \rightarrow f_!}$ in more detail.

\begin{lemma}\label{LemLambdaVarrhoEqualsPiecesOfXi}
For any natural transformation ${\lambda:p_! g_* \rightarrow f_!}$ the triangle below commutes 
$$\xymatrix{
f_! g^* g_* \ar[rd]_-{f_! \xi} \ar[r]^-{\varrho_{g_*}} & p_! g_* \ar[d]^-{\lambda}  \\
 & f_! 
}$$
if and only if any of the two mate rectangles below
$$\xymatrix{
 g^* g_* \ar[d]_-{\xi} \ar[r]^-{g^* \sigma_{g_*}} & g^* p^* p_! g_* \ar[r]^-{id} & f^* p_! g_*  \ar[d]^-{f^* \lambda} &&
 g_* \ar[d]_-{g_* \osigma} \ar[r]^-{\sigma_{g_*}} & p^* p_! g_* \ar[r]^-{p^* \lambda} & p^* f_! \ar[d]^-{\nu_{p^* f_!}} \\
  1_{\calF} \ar[rr]_-{\osigma} && f^* f_! && g_* f^* f_! \ar[rr]_-{=}&& g_* g^* p^* f_! 
}$$
commutes.
\end{lemma}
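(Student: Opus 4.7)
The plan is to recognize this as a pure mate-calculus statement: the three displayed equations are related pairwise by the mate correspondence under the adjunctions $f_! \dashv f^*$ and $g^* \dashv g_*$, and since those correspondences are bijections the biconditionals are automatic. More precisely, I would show that the left-hand rectangle is the mate of the triangle under $f_! \dashv f^*$, and that the right-hand rectangle is in turn the mate of the left-hand rectangle under $g^* \dashv g_*$.

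For the first step, the mate of a natural transformation $f_! H \to K$ under $f_! \dashv f^*$ is $f^* K \circ \osigma_H$, and this is a bijection between natural transformations $f_! H \to K$ and natural transformations $H \to f^* K$. Applied to $f_! \xi \colon f_! g^* g_* \to f_!$, naturality of $\osigma$ (with respect to $\xi$) immediately yields $\osigma \circ \xi$. Applied to $\lambda \circ \varrho_{g_*}$, I would first compute the mate of $\varrho_{g_*}$. Using the explicit formula $\varrho = \otau_{p_!} \circ f_! g^* \sigma$, naturality of $\osigma$ turns $f^* f_! g^* \sigma_{g_*} \circ \osigma_{g^* g_*}$ into $\osigma_{f^* p_! g_*} \circ g^* \sigma_{g_*}$, and then the triangular identity $f^* \otau \circ \osigma_{f^*} = \mathrm{id}_{f^*}$ cancels the remaining $f^* \otau_{p_! g_*}$. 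The conclusion is that the mate of $\varrho_{g_*}$ is simply $g^* \sigma_{g_*}$, so the mate of $\lambda \circ \varrho_{g_*}$ is $f^* \lambda \circ g^* \sigma_{g_*}$. This gives exactly the left rectangle.

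For the second step, the mate under $g^* \dashv g_*$ sends a natural transformation $g^* H \to K$ to $g_* K \circ \nu_H$. Applied to the top composite of the left rectangle, namely $\osigma \circ \xi$, the triangular identity $g_* \xi \circ \nu_{g_*} = \mathrm{id}_{g_*}$ yields $g_* \osigma$. Applied to the bottom composite $f^* \lambda \circ g^* \sigma_{g_*}$, two applications of the naturality of $\nu$ (first with $\sigma_{g_*}$, then with $p^* \lambda$) transform $g_*(f^* \lambda \circ g^* \sigma_{g_*}) \circ \nu_{g_*}$ into $\nu_{p^* f_!} \circ p^* \lambda \circ \sigma_{g_*}$, which is exactly the top path of the right rectangle. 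Bijectivity of the mate correspondence then gives the desired equivalence.

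I expect the main bookkeeping obstacle to be the verification that $f^* \varrho_{g_*} \circ \osigma_{g^* g_*} = g^* \sigma_{g_*}$; everything else is a straightforward instance of the triangular identities plus naturality. One may also note that since the whole statement is a triple equivalence, there is a slight redundancy—I only need triangle $\Leftrightarrow$ left rectangle and left $\Leftrightarrow$ right rectangle—so I would present the proof precisely in that order to avoid doing any mate computation twice.
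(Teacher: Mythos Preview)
Your proposal is correct and follows essentially the same route as the paper. The paper's proof records exactly your two key transposition facts under $f_! \dashv f^*$---that $g^* \sigma_{g_*}$ is the transpose of $\varrho_{g_*}$ and that $\osigma \circ \xi$ is the transpose of $f_! \xi$---and then concludes the equivalence of the triangle with the left rectangle; it does not spell out the passage to the right rectangle via $g^* \dashv g_*$, so your second step is actually more explicit than what the paper provides.
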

\begin{proof}
We leave it the reader to check that 
\begin{enumerate}
\item The map ${g^* \sigma_{g_*}:g^* g_* \rightarrow g^* p^* p_! g_* = f^* p_! g_*}$ is the transpose of ${\varrho_{g_*}:f_! g^* g_* \rightarrow p_! g_*}$.
\item The composite
$$\xymatrix{
g^* g_* \ar[r]^-{\xi} & 1_{\calF} \ar[r]^-{\osigma} & f^* f_!
}$$
is the transpose of ${f_! \xi:f_! g^* g_* \rightarrow f_!}$.
\end{enumerate}
Then simply observe that the transpositions of the two maps ${f_! g^* g_* \rightarrow  f_!}$ in the triangle in the statement coincide with the corresponding maps in the left rectangle in the statement.
\end{proof}

\subsection{Further assuming discrete inclusions}

   In this section we assume further that ${p^*:\calS \rightarrow \calE}$ and ${f^*:\calS \rightarrow \calF}$ are full and faithful. In other words, we assume that ${\alpha:1_{\calS}  \rightarrow p_* p^*}$ and ${\oalpha:1_{\calS} \rightarrow f_* f^*}$ are isos. Recall that under the standing assumptions ${\oalpha = (p_* \nu_{p^*}) \alpha:1_{\calS} \rightarrow f_* f^*}$ so, ${p_* \nu_{p^*}:p_* p^* \rightarrow p_* g_* g^* p^* = f_* f^*}$ is forced to be an iso.

   In this context we have natural transformations ${\theta:p_* \rightarrow p_!}$ and ${\otheta:f_* \rightarrow f_!}$  defined as the following composites
$$\xymatrix{
p_* \ar[r]^-{p_* \sigma} & p_* p^* p_! \ar[r]^-{\alpha_{p_!}^{-1}} & p_! &&
  f_* \ar[r]^-{f_* \osigma} & f_* f^* f_! \ar[r]^-{\oalpha_{f_!}^{-1}} & f_! 
}$$
so, in more detail, the natural transformation ${\otheta:f_* \rightarrow f_!}$ is the following composite
$$\xymatrix{
f_* \ar[r]^-{f_* \osigma} & f_* f^* f_! \ar[r]^-{=} & p_* g_* g^* p^* f_! \ar[rr]^-{(p_* \nu_{p^* f_!})^{-1}} & & p_* p^* f_! \ar[r]^-{\alpha_{f_!}^{-1}} & f_!
}$$
using the description of $\oalpha$ in terms of $\alpha$ and $\nu$.

\begin{lemma}\label{LemLambdaTheta}
Let ${\lambda:p_! g_* \rightarrow f_!}$ be a natural transformation. Then the square on the left below commutes 
$$\xymatrix{
p_* g_* \ar[d]_-{=} \ar[r]^-{\theta_{g_*}} & p_! g_* \ar[d]^-{\lambda} && 
  p_* g_* \ar[d]_-{p_* g_* \osigma} \ar[r]^-{p_* \sigma_{g_*}} & p_* p^* p_! g_* \ar[r]^-{p_* p^* \lambda} & p_* p^* f_! \ar[d]^-{p_* \nu_{p^* f_!}} \\
f_* \ar[r]_-{\otheta} & f_! && 
  p_* g_* f^* f_! \ar[rr]_-{=} && p_* g_* g^* p^* f_!
}$$
if and only if the rectangle on the right above commutes.
\end{lemma}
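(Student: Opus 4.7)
The plan is to reduce everything to naturality of $\alpha$ combined with the explicit descriptions of $\theta$ and $\otheta$ given just before the lemma. First I would expand each side of the square in the statement. On the top-right, $\theta_{g_*} = \alpha^{-1}_{p_! g_*} \circ p_* \sigma_{g_*}$, so the upper composite $\lambda \circ \theta_{g_*}$ equals $\lambda \circ \alpha^{-1}_{p_! g_*} \circ p_* \sigma_{g_*}$. Naturality of $\alpha$ (equivalently of $\alpha^{-1}$) applied to the morphism $\lambda : p_! g_* \rightarrow f_!$ rewrites this as $\alpha^{-1}_{f_!} \circ p_* p^* \lambda \circ p_* \sigma_{g_*}$.

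Next I would expand $\otheta$ using its description in the paragraph preceding the lemma, namely
$$\otheta = \alpha^{-1}_{f_!} \circ (p_* \nu_{p^* f_!})^{-1} \circ p_* g_* \osigma,$$
noting $f_* = p_* g_*$ and $f^* f_! = g^* p^* f_!$. So the square on the left commutes exactly when
$$\alpha^{-1}_{f_!} \circ p_* p^* \lambda \circ p_* \sigma_{g_*} \;=\; \alpha^{-1}_{f_!} \circ (p_* \nu_{p^* f_!})^{-1} \circ p_* g_* \osigma,$$
and since $\alpha^{-1}_{f_!}$ is an iso (because $p^*$ is fully faithful), this holds if and only if $p_* p^* \lambda \circ p_* \sigma_{g_*} = (p_* \nu_{p^* f_!})^{-1} \circ p_* g_* \osigma$. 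Postcomposing with the iso $p_* \nu_{p^* f_!}$, this is equivalent to $p_* \nu_{p^* f_!} \circ p_* p^* \lambda \circ p_* \sigma_{g_*} = p_* g_* \osigma$, which is precisely commutativity of the rectangle on the right of the statement (reading the bottom path through the identification $f^* f_! = g^* p^* f_!$).

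There is no real obstacle here; the only slightly delicate point is keeping track of the identifications $f_* = p_* g_*$ and $f^* = g^* p^*$ and the fact that $p_* \nu_{p^* f_!}$ is an iso (a consequence of $f^*$ and $p^*$ being fully faithful, as already noted in the text). Everything else is definition-chasing and a single application of naturality. I would organize the proof as one calculation followed by a remark that the final equation is literally the rectangle in the statement.
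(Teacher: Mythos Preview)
Your proposal is correct and follows essentially the same route as the paper: both arguments expand $\theta$ and $\otheta$ via their defining composites, use naturality of $\alpha$ to commute $\lambda$ past $\alpha^{-1}$, and then cancel the isomorphisms $\alpha^{-1}_{f_!}$ and $p_*\nu_{p^* f_!}$ to arrive at the rectangle on the right. The paper packages this as a single commutative diagram whose outer and inner rectangles correspond to the two conditions, while you write it out equationally, but the content is identical.
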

\begin{proof}
Expanding the definitions of $\theta$ and $\otheta$ we obtain that commutativity of the left square in the statement is equivalent to commutativity of the outer rectangle below
$$\xymatrix{
p_* g_* \ar[d]_-{=} \ar[rrrr]^-{p_* \sigma_{g_*}} &&&& p_* p^* p_! g_* \ar[d]^-{p_* p^* \lambda} \ar[r]^-{\alpha_{p_! g_*}^{-1}} & p_! g_* \ar[d]^-{\lambda} \\
f_* \ar[r]_-{f_* \osigma} & f_* f^* f_! \ar[r]_-{=} & p_* g_* g^* p^* f_! \ar[rr]_-{(p_* \nu_{p^* f_!})^{-1}} & & p_* p^* f_! \ar[r]_-{\alpha_{f_!}^{-1}} & f_!
}$$
but since $\alpha$ is iso, the outer rectangle commutes if and only if the inner rectangle commutes. In turn, commutativity of the inner rectangle is equivalent to commutativity of the rectangle in the statement.
\end{proof}

   Notice that if ${\theta:p_* \rightarrow p_!}$ is epi then there is at most one ${\lambda:p_! g_* \rightarrow f_!}$ as in Lemma~\ref{LemLambdaTheta}.
   On the other hand, if ${\otheta:f_* \rightarrow f_!}$ is epi then any ${\lambda:p_! g_* \rightarrow f_!}$ as in Lemma~\ref{LemLambdaTheta} is epi.

\begin{lemma}\label{LemIfRealPpThenLambdaTheta}
If ${g_*:\calF \rightarrow \calE}$ preserves $\calS$-indexed coproducts
then  the canonical transformation  ${\lambda = (p_! \xi) (\rho_{g_*}):p_! g_* \rightarrow f_!}$ makes the following diagram
$$\xymatrix{
p_* g_* \ar[d]_-{=} \ar[r]^-{\theta} & p_! g_* \ar[d]^-{\lambda} \\
f_* \ar[r]_-{\otheta} & f_! 
}$$
commute.
\end{lemma}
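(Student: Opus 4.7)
The plan is to reduce to the rectangle reformulations provided by the two prior lemmas (\ref{LemLambdaVarrhoEqualsPiecesOfXi} and \ref{LemLambdaTheta}) and then check that the defining property of $\lambda$ matches what those reformulations need. The hypothesis that $g_*$ preserves $\calS$-indexed coproducts gives, via Lemma~\ref{LemIndexedCoprosAndPreservationOfPieces}, that $\varrho$ is an iso with inverse $\rho$, so the formula $\lambda=(f_!\xi)(\rho_{g_*})$ makes sense and satisfies $\lambda\,\varrho_{g_*}=f_!\xi$; this is precisely the commuting triangle in the statement of Lemma~\ref{LemLambdaVarrhoEqualsPiecesOfXi}.

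Having verified that triangle, I would invoke Lemma~\ref{LemLambdaVarrhoEqualsPiecesOfXi} to conclude that the right rectangle in its statement commutes, namely
$$\xymatrix{
 g_* \ar[d]_-{g_* \osigma} \ar[r]^-{\sigma_{g_*}} & p^* p_! g_* \ar[r]^-{p^* \lambda} & p^* f_! \ar[d]^-{\nu_{p^* f_!}} \\
  g_* f^* f_! \ar[rr]_-{=}&& g_* g^* p^* f_!
}$$
Now I would apply the functor $p_*$ to this rectangle; doing so produces verbatim the rectangle appearing on the right hand side of Lemma~\ref{LemLambdaTheta}. By Lemma~\ref{LemLambdaTheta}, the commutativity of that rectangle is equivalent to the commutativity of the square in the statement, which is what we want.

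I expect essentially no obstacle beyond bookkeeping: the content of the argument is entirely contained in the two preparatory lemmas, and what remains is just the observation that the mate correspondence $\lambda\leftrightarrow$ (rectangle-reformulation) arising from applying $p_*$ is compatible with the mate correspondence $(f_!\xi)(\varrho_{g_*})=\lambda\leftrightarrow$ (right rectangle of Lemma~\ref{LemLambdaVarrhoEqualsPiecesOfXi}). The only point to keep in mind is that both Lemma~\ref{LemLambdaVarrhoEqualsPiecesOfXi} and Lemma~\ref{LemLambdaTheta} are biconditional, so no additional properties of $\lambda$ beyond its defining formula are needed.
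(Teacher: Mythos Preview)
Your proposal is correct and follows essentially the same approach as the paper: both arguments verify the triangle of Lemma~\ref{LemLambdaVarrhoEqualsPiecesOfXi} for the canonical $\lambda$, pass to the equivalent right rectangle there, apply $p_*$ to obtain the rectangle of Lemma~\ref{LemLambdaTheta}, and then use the biconditional of the latter. The paper's proof is simply a one-line compression of exactly these steps.
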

\begin{proof}
The rectangle on the right of the statement of Lemma~\ref{LemLambdaTheta} coincides with the result of applying ${p_*:\calE \rightarrow \calS}$ to one of the rectangles of Lemma~\ref{LemLambdaVarrhoEqualsPiecesOfXi}.
\end{proof}

We can now prove our sufficient condition for the adjunction  $g$ to preserve pieces.
   
\begin{proposition}\label{cohlambda}
If the Nullstellensatz holds for ${p:\calE\rightarrow\calS}$ and ${g_*:\calF \rightarrow \calE}$ preserves $\calS$-indexed coproducts then, the adjunction ${g:\calF \rightarrow \calE}$ preserves pieces if and only if  and there exists a natural iso ${\lambda:p_! g_* \rightarrow f_!}$ such that the following diagram
$$\xymatrix{
p_* g_* \ar[d]_-{=} \ar[r]^-{\theta} & p_! g_* \ar[d]^-{\lambda} \\
f_* \ar[r]_-{\otheta} & f_! 
}$$
commutes. Moreover, in this case, ${\lambda = (p_! \xi) (\rho_{g_*}):p_! g_* \rightarrow f_!}$.
\end{proposition}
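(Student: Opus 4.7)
The plan is to prove the biconditional by leveraging Proposition~\ref{PropObviousCharOfGpp} (which recasts piece-preservation as the existence of an iso $\lambda:p_!g_*\to f_!$ satisfying a certain triangle) together with Lemma~\ref{LemIfRealPpThenLambdaTheta} (which ensures the canonical $\lambda$ makes the square in the statement commute). The Nullstellensatz will enter only in the backward direction, to supply a uniqueness argument.

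For the forward direction, I would take $\lambda$ to be the canonical map $(f_!\xi)(\rho_{g_*}):p_!g_*\to f_!$. Since $g$ preserves pieces, this is an iso by Proposition~\ref{PropObviousCharOfGpp}, and since $g_*$ preserves $\calS$-indexed coproducts by hypothesis, Lemma~\ref{LemIfRealPpThenLambdaTheta} shows that this same $\lambda$ fits into the required commutative square. The same choice of $\lambda$ delivers the ``moreover'' clause.

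For the backward direction, given a natural iso $\lambda:p_!g_*\to f_!$ making the square commute, I would introduce the canonical $\lambda_0:=(f_!\xi)(\rho_{g_*})$; Lemma~\ref{LemIfRealPpThenLambdaTheta} again yields $\lambda_0\circ\theta_{g_*}=\otheta$, and the hypothesis on $\lambda$ gives $\lambda\circ\theta_{g_*}=\otheta$. By the Nullstellensatz each component of $\theta:p_*\to p_!$ is epi in $\calS$, so each component of the whiskered $\theta_{g_*}$ is epi too; right cancellation then forces $\lambda=\lambda_0$. Hence $\lambda_0$ is itself an iso, which with Definition~\ref{DefGpp} (equivalently, the third condition of Proposition~\ref{PropObviousCharOfGpp}) shows that $g$ preserves pieces.

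The only delicate point will be the uniqueness step in the backward direction: one has to recognise that the square in the statement has precisely the form $\lambda\circ\theta_{g_*}=\otheta$, so that the Nullstellensatz (through componentwise epi-ness of $\theta_{g_*}$) is exactly what is needed to pin $\lambda$ down. Everything else is bookkeeping with the previously proved characterizations.
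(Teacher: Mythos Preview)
Your proposal is correct and follows essentially the same route as the paper: both arguments hinge on Lemma~\ref{LemIfRealPpThenLambdaTheta} (the canonical $\lambda$ makes the square commute) together with the observation that the Nullstellensatz, via epi-ness of $\theta_{g_*}$, forces any $\lambda$ making the square commute to coincide with the canonical one. The paper compresses the biconditional into a single uniqueness-then-definition step rather than treating the two directions separately, but the content is the same.
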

\begin{proof}
First notice that if the Nullstellensatz holds for $p$ then there exists at most one $\lambda$ as in the statement. Hence, if ${g_*:\calF\rightarrow \calE}$ preserves
$\calS$-indexed coproducts, Lemma~\ref{LemIfRealPpThenLambdaTheta} implies that such $\lambda$ must be the canonical one. Finally, by definition, this $\lambda$ is an iso if and only the adjunction $g$ preserves pieces.
\end{proof}

Notice that if we also assume that the Nullstellensatz holds for ${f:\calF \rightarrow \calS}$, and $\calS$ is
balanced, then it is enough to require that $\lambda$ be mono.

  In the presence of connectors we have the following.

\begin{lemma}\label{lemma1.3}
Assume that $g_*\pts\calF\to\calE$ preserves
$\calS$-indexed coproducts and that $0,1\pts 1\to I$ is a connector for $p\pts\calE\to\calS$. If
$g^*0,g^*1\pts 1\to g^*I$ is a connector for $f\pts\calF\to\calS$, then $g\pts\calF\to\calE$ preserves pieces.
\end{lemma}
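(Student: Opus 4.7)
The plan is to apply Proposition \ref{cohlambda}: since $p$ is pre-cohesive the Nullstellensatz holds, and the hypothesis gives us that $g_{*}$ preserves $\calS$-indexed coproducts, so it suffices to construct a natural iso $\lambda\pts p_{!}g_{*}\to f_{!}$ such that $\lambda\circ\theta_{g_{*}}=\otheta$ (under the identification $p_{*}g_{*}=f_{*}$). I will obtain $\lambda$ by comparing, for each $Y\in\calF$, the two connector coequalizers: the one for $p$ applied to $g_{*}Y$, and the one for $f$ applied to $Y$.

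The key observation is a chain of natural isomorphisms
\[
p_{*}\bigl((g_{*}Y)^{I}\bigr) \;\cong\; \calE(I,g_{*}Y) \;\cong\; \calF(g^{*}I,Y) \;\cong\; f_{*}(Y^{g^{*}I}),
\]
together with the equality $p_{*}g_{*}=f_{*}$. Under these identifications I will check that the parallel pair $p_{*}\ev_{0},p_{*}\ev_{1}$ becomes $f_{*}\ev_{g^{*}0},f_{*}\ev_{g^{*}1}$. This is a routine chase: an element $\chi\pts I\to g_{*}Y$ of $p_{*}((g_{*}Y)^{I})$ transposes to $\bar\chi\pts g^{*}I\to Y$, and precomposition of $\chi$ with $i\pts 1\to I$ transposes to precomposition of $\bar\chi$ with $g^{*}i\pts 1\to g^{*}I$, because $g^{*}$ preserves the terminal object.

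Because both forks
\[
p_{*}((g_{*}Y)^{I})\rightrightarrows p_{*}(g_{*}Y)\xrightarrow{\theta_{g_{*}Y}}p_{!}(g_{*}Y)
\qquad\text{and}\qquad
f_{*}(Y^{g^{*}I})\rightrightarrows f_{*}Y\xrightarrow{\otheta_{Y}}f_{!}Y
\]
are coequalizers (the first by the connector hypothesis on $p$, the second by the assumption that $g^{*}0,g^{*}1$ is a connector for $f$) and have (canonically isomorphic) parallel pairs, the universal property yields a unique iso $\lambda_{Y}\pts p_{!}(g_{*}Y)\to f_{!}Y$ satisfying $\lambda_{Y}\circ\theta_{g_{*}Y}=\otheta_{Y}$. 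This very equation is the compatibility required in Proposition \ref{cohlambda}. Naturality of $\lambda$ in $Y$ follows from the uniqueness part of the universal property combined with naturality of $\theta$ and $\otheta$: both $\lambda_{Y'}\circ p_{!}(g_{*}h)\circ\theta_{g_{*}Y}$ and $f_{!}h\circ\lambda_{Y}\circ\theta_{g_{*}Y}$ equal $\otheta_{Y'}\circ f_{*}h$, and $\theta_{g_{*}Y}$ is epi.

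The main (minor) obstacle is step two, the bookkeeping that identifies the two parallel pairs under the adjunction isomorphisms; once that is done, the rest is a direct invocation of the universal property of coequalizers followed by Proposition \ref{cohlambda}, which delivers the conclusion that $g$ preserves pieces.
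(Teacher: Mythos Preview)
Your argument is essentially the paper's own proof: both set up the two connector coequalizers, identify the parallel pairs via the canonical iso $p_{*}((g_{*}Y)^{I})\cong f_{*}(Y^{g^{*}I})$, extract $\lambda$ from the universal property, and invoke Proposition~\ref{cohlambda}. One small caveat: the intermediate identifications $p_{*}((g_{*}Y)^{I})\cong\calE(I,g_{*}Y)$ and $f_{*}(Y^{g^{*}I})\cong\calF(g^{*}I,Y)$, together with the element-style verification, tacitly assume $\calS=\Set$; over a general base it is cleaner to use the internal iso $(g_{*}Y)^{I}\cong g_{*}(Y^{g^{*}I})$ (cf.\ Lemma~\ref{LemGisStrong}) and then apply $p_{*}$, but this does not affect the substance of your proof.
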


\proof Our hypothesis imply that the two horizontal forks below are coequalizers
\[
\xy
(-10,8)*+{p_*((g_*F)^I)}="1",
(30,8)*+{p_*(g_*F)}="2",
(60,8)*+{p_!(g_*F)}="3",
(-10,-8)*+{f_*(F^{g^*I})}="1b",
(30,-8)*+{f_*F}="2b",
(60,-8)*+{f_!F}="3b",
\POS "1" \ar@<3pt>^{p_*\ev_0} "2",
\POS "1" \ar@<-2pt>_{p_*\ev_1} "2",
\POS "2" \ar^{\theta} "3",
\POS "1b" \ar@<3pt>^{f_*\ev_{g^*0}} "2b",
\POS "1b" \ar@<-2pt>_{f_*\ev_{g^*1}} "2b",
\POS "2b" \ar^{\overline{\theta}} "3b",
\POS "1" \ar_{\simeq} "1b",
\POS "2" \ar_{=} "2b",
\POS "3" \ar@{.>}^{\lambda_F} "3b",
\endxy
\]
where the left arrow is the canonical iso. It is straightforward to check that the left part of the diagram commutes sequentially, and
thus induces a unique isomorphism $\lambda_F$ as shown above. Thus the result follows from Proposition \ref{cohlambda}.
\endproof

\section{Some geometric morphisms that preserve pieces}
\label{SecExamplesOfPpGm}

   Let $\calC$ be a small category with terminal object and such that every object has a point so that ${p:\Psh{\calC} \rightarrow \Set}$ is pre-cohesive.
Let ${g:\calF \rightarrow \Psh{\calC}}$ be a geometric morphism such that the composite ${f = p g:\calF\rightarrow \Set}$ is pre-cohesive.
We can combine Lemmas~\ref{lemma1.3} and \ref{lemma1.4} as follows.

\begin{proposition}\label{proposition1.5}
Let $0,1\pts 1\to I$ be a connector for the pre-cohesive $p\pts\widehat{\calC}\to\con$ such that $g^*0,g^*1\pts 1\to g^*I$
is a connector for $f\pts\calF\to\con$. If ${g^* I}$ is connected and for every $C$ in $\calC$ there exists a finite set $S$
such that $\calC(-,C)$ is a retract of $I^{p^*S}$ in $\widehat{\calC}$, then $g\pts\calF\to\widehat{\calC}$ preserves pieces.
\end{proposition}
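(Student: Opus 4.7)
The strategy is to reduce the statement, via Lemma \ref{lemma1.3}, to the assertion that $g_*$ preserves $\Set$-indexed coproducts. Given the two connector hypotheses already in the statement, this is all that Lemma \ref{lemma1.3} needs. In turn, by Lemma \ref{lemma1.4} it suffices to show that $g^*(\calC(-,C))$ is indecomposable in $\calF$ for every $C$ in $\calC$. Since $f:\calF \rightarrow \Set$ is pre-cohesive, the paper records that indecomposability is equivalent to connectedness, i.e.\ to the equality $f_!(g^*(\calC(-,C))) = 1$. So the real content to establish is this connectedness statement.

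To prove it, fix $C$ in $\calC$ and a finite set $S$ with $\calC(-,C)$ a retract of $I^{p^*S}$ in $\widehat{\calC}$. Since $S$ is finite, $p^*S$ is a finite copower of $1$, so $I^{p^*S}$ is isomorphic to a finite product $I^n$ of copies of $I$. Applying $g^*$, which preserves finite limits, identifies $g^*(I^{p^*S})$ with $(g^*I)^n$ and exhibits $g^*(\calC(-,C))$ as a retract of this finite power in $\calF$. Now $f_!$ preserves finite products, and by the hypothesis that $g^*I$ is connected we have $f_!(g^*I) = 1$, so $f_!((g^*I)^n) = 1$. Applying $f_!$ to the retract data then presents $f_!(g^*(\calC(-,C)))$ as a retract of $1$ in $\Set$.

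The argument then closes because the only retract of $1$ in $\Set$ is $1$ itself: any such retract admits both a map from $1$ (so is nonempty) and a section into $1$ (so has at most one element). Consequently $f_!(g^*(\calC(-,C))) = 1$, hence $g^*(\calC(-,C))$ is connected and therefore indecomposable, and Lemmas \ref{lemma1.4} and \ref{lemma1.3} together deliver the conclusion. I do not anticipate any serious obstacle: the only step requiring a little care is the identification of $g^*(I^{p^*S})$ with a finite product of copies of $g^*I$, which is immediate from the finiteness of $S$ together with $p^*$ and $g^*$ preserving finite limits.
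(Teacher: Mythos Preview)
Your proof is correct and follows essentially the same route as the paper: reduce via Lemma~\ref{lemma1.3} to preservation of $\Set$-indexed coproducts, then via Lemma~\ref{lemma1.4} to connectedness of $g^*(\calC(-,C))$, and finally use the retract hypothesis together with finite-product preservation by $g^*$ and $f_!$ and connectedness of $g^* I$. The only cosmetic differences are that the paper writes the power as $(g^* I)^{f^* S}$ rather than unpacking it as $(g^* I)^n$, and invokes ``retracts of connected objects are connected'' directly instead of passing through ``retract of $1$ in $\Set$ is $1$''.
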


\proof By Lemma \ref{lemma1.3} it suffices to show that $g_*$ preserves $\con$-indexed
coproducts. In turn, by Lemma \ref{lemma1.4},
it suffices to show that $g^*(\calC(-,C))$ is connected for every $C\in\calC$. Since retracts of connected objects are
connected, it suffices to show that $g^*(I^{p^*S})$ is connected for any finite set $S$. Since $g^*$ preserves finite products,
$g^*(I^{p^*S})\simeq (g^*I)^{g^*(p^*S)}=(g^*I)^{f^*S}$. As $f_!$ preserves finite products, $(g^*I)^{f^*S}$ is connected if
$g^*I$ is.
\endproof

In fact, one could weaken the hypothesis by requiring only that $g^*(\calC(-,C))$ is a retract of $(g^*I)^{f^*S}$, but
the above is enough for our purposes.

\begin{corollary}\label{corollary1.6}
Let $\calC$ have finite products and let $0,1\pts 1\to I$ be a bipointed object in $\calC$ such that every
object of $\calC$ is a retract of a finite power of $I$. If $0,1\pts 1\to I$ (as a bipointed object in $\widehat{\calC})$ is a 
connector for $p\pts\widehat{\calC}\to\con$, $g^*0,g^*1\pts 1\to g^*I$ is a connector for $f\pts\calF\to\con$
and $g^*I$ is connected, then $g\pts\calF\to\widehat{\calC}$ preserves pieces.
\end{corollary}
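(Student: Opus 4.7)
My plan is to show that the hypotheses of Corollary~\ref{corollary1.6} imply the hypotheses of Proposition~\ref{proposition1.5}, so the conclusion follows immediately by invoking that proposition. The three conditions on connectors and the connectedness of $g^*I$ are given verbatim, so the only thing that needs checking is the retract condition: for every $C$ in $\calC$ there exists a finite set $S$ with $\calC(-,C)$ a retract of $I^{p^*S}$ in $\widehat{\calC}$.

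To verify this, I would first invoke the hypothesis to write $C$ as a retract of $I^n$ in $\calC$ for some $n \in \Nat$. The Yoneda embedding ${y:\calC \rightarrow \widehat{\calC}}$ is fully faithful and, since $\calC$ has finite products, preserves them. Hence $yC = \calC(-,C)$ is a retract of $y(I^n) \cong (yI)^n$ in $\widehat{\calC}$, where we use our standard abuse of notation identifying $I$ with its image $yI$.

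Next, I would identify $(yI)^n$ with $I^{p^*S}$ for $S$ any finite set of cardinality $n$. For such $S$ we have ${p^*S \cong \coprod_{s\in S}1}$ in $\widehat{\calC}$ since $p^*$ preserves colimits and $S = \coprod_{s\in S}1$ in $\con$. By cartesian closedness of $\widehat{\calC}$, exponentials send colimits in the exponent to limits, so
\[
I^{p^*S} \;\cong\; I^{\coprod_{s\in S}1} \;\cong\; \prod_{s\in S} I \;\cong\; I^n.
\]
Composing with the retraction from the previous paragraph, $\calC(-,C)$ is a retract of $I^{p^*S}$ in $\widehat{\calC}$, as required.

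With the retract condition in hand, all hypotheses of Proposition~\ref{proposition1.5} are satisfied and the conclusion that ${g:\calF \rightarrow \widehat{\calC}}$ preserves pieces follows. I do not anticipate a real obstacle here; the only point that requires any care is being explicit about the identifications between $p^*$ of a finite set, a finite coproduct of $1$ in $\widehat{\calC}$, and the exponent that converts a finite product of $I$ into $I^{p^*S}$. Everything else is a direct appeal to the previously established proposition.
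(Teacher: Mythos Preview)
Your proposal is correct and follows exactly the intended route: the paper states the corollary without proof, treating it as immediate from Proposition~\ref{proposition1.5}, and the only nontrivial point is precisely the retract condition you verify via Yoneda preserving finite products and the identification $I^{p^*S}\cong I^n$. Your argument fills in this routine detail faithfully.
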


For a concrete example let us consider the cohesive topos $f\pts\sh(\bflp,K)=\calF\to \con$ of picewise linear maps
from \cite{Menni2014a} that we mentioned above. It is shown loc.\ cit.\ that the representable $I'=\bflp(-,[0,1])$ is a 
connector for $f$ and may be equipped with a total order with distinct endpoints. Hence $I'$ is, in particular, a model for the 
theory of strictly bipointed objects.

\begin{proposition}\label{PropBipointed}
The geometric morphism $g\pts\calF\to\con^\calA$ such that $g^*(\calA(I,-))=I'$ is surjective and preserves pieces.
\end{proposition}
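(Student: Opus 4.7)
The proof should split into two independent parts: surjectivity and preservation of pieces.

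For surjectivity, I plan to adapt the strategy of the proof of Corollary~\ref{CorJohnstone}. Since $f\colon \calF \to \con$ is cohesive, it is in particular local, so the points functor $f_*$ is the inverse image of a geometric morphism $c\colon \con \to \calF$. Consider the composite $gc\colon \con \to \con^\calA$, whose inverse image is $(gc)^* = f_* g^*$. Evaluating on the generic strictly bipointed object $\calA(I,-)$ gives $f_*(I') = \bflp([0,0],[0,1]) = [0,1]$, where the first equality uses subcanonicity of the site $(\bflp,K)$ recalled in Section~\ref{SecPreCohesive}. By the universal property of $\con^\calA$ as the classifier of strictly bipointed objects, $gc$ must coincide with the geometric morphism of Corollary~\ref{bipointedsurjection}, which is surjective. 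Hence $f_* g^*$ is faithful, so $g^*$ is faithful, so $g$ is surjective.

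For preservation of pieces, I will apply Corollary~\ref{corollary1.6} with $\calC = \calA\op$ and $I$ the free strictly bipointed set on one generator. By Example~\ref{ExStrictlyBipointed}, $\calA\op$ has finite products and every object is a finite power of $I$. The bipointing $0,1\colon 1 \to I$ is a connector for $p\colon \con^\calA \to \con$, as recalled in Section~\ref{SecPreCohesive}. By construction, $g^*I = I'$ as bipointed objects, so the induced bipointing $g^*0, g^*1\colon 1 \to I'$ is the endpoint bipointing, which makes $I'$ a connector for $f$, again as recalled in Section~\ref{SecPreCohesive}. It then remains to observe that $g^*I = I'$ is connected in $\calF$, which follows from the fact that the connectors in the examples treated in \cite{Menni2014a} are connected.

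The main obstacle is not a calculation but the bookkeeping of matching hypotheses: in particular, verifying that the bipointing $g^*0, g^*1$ on $I'$ coincides with the endpoint bipointing used to make $I'$ a connector for $f$, and recording the connectedness of $I'$. Once these identifications with \cite{Menni2014a} are in place, both parts of the proposition follow immediately from Corollaries~\ref{bipointedsurjection} and \ref{corollary1.6}.
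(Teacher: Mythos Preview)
Your proposal is correct and follows essentially the same route as the paper: for surjectivity you factor through the ``center'' $c\colon\Set\to\calF$ with $c^*=f_*$ and invoke Corollary~\ref{bipointedsurjection}, exactly as in the proof of Corollary~\ref{CorJohnstone}; for preservation of pieces you apply Corollary~\ref{corollary1.6} using that every object of $\calA\op$ is a finite power of $I$, that $I$ is a connector for $p$, and that $g^*I=I'$ is a connected connector for $f$. The paper's proof is slightly terser (it does not spell out the connectedness of $I'$ or the matching of bipointings, treating these as already established in the preceding paragraph and in Section~\ref{SecPreCohesive}), but there is no substantive difference.
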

\proof The category $\calA^{\textup{\scriptsize{op}}}$ is a Lawvere theory and, in particular, every object is a power of $I$. So,
in order to apply Corollary~\ref{corollary1.6} we need only prove that $I$ in $\calA^{\textup{\scriptsize{op}}}$ induces a connector in
$\con^\calA$. We leave the details to the reader. (One possible argument is to use Lemma 8.9 in \cite{Menni2014a}.)

   To prove that $g$ is surjective let ${c:\Set \rightarrow \calF}$ be the geometric morphism such that ${c_* = f^!:\Set \rightarrow \calF}$. Then, as in Corollary~\ref{CorJohnstone}, it is enough to prove that the composite geometric morphism ${g c:\Set \rightarrow \con^\calA}$ is surjective. Now, the inverse image of ${g c}$ sends ${I}$ in ${\con^\calA}$ to ${c^* (g^* I) = f_* (g^* I) = [0, 1]}$. So ${g c}$ is surjective by Corollary~\ref{bipointedsurjection}.
\endproof

Proposition~{10.6} in \cite{Menni2014a}  shows the existence of a geometric morphism ${g\pts\calF\to\widehat{\Delta}}$ such that ${g^* I}$ is a connector.   

\begin{proposition}\label{PropSimplicial}
The geometric morphism $g\pts\sh(\bflp,K)\to\widehat{\Delta}$ is surjective and preserves pieces.
\end{proposition}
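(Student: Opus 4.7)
The plan is two-fold: apply Proposition~\ref{proposition1.5} to obtain preservation of pieces, and then follow the template of Proposition~\ref{PropBipointed} (itself modelled on Corollary~\ref{CorJohnstone}) to obtain surjectivity. For preservation of pieces, I would verify the hypotheses of Proposition~\ref{proposition1.5} for $\calC = \Delta$ and the bipointed $0,1\pts 1 \to I = \Delta(-,[1])$. Three ingredients are essentially in place: $\Delta$ has terminal object and every object has a point (so $p\pts\widehat{\Delta}\to\con$ is pre-cohesive), the bipointed $I$ is a connector for $p$ by Example~8.10 in \cite{Menni2014a}, and $g^*I = I' = \bflp(-,[0,1])$ is a connector for $f$ by Proposition~10.6 in \cite{Menni2014a}. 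Connectedness of $I'$ (i.e., $f_! I' = 1$) should be read off from the description of $f_!$ on representables in \cite{Menni2014a} and matches the intuition that the closed interval is a single piece.

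What remains is the most concrete step: for each $[n]\in\Delta$, I must exhibit $\Delta(-,[n])$ as a retract of $I^{p^*S}$ for some finite $S$ in $\widehat{\Delta}$. Taking $S=\{1,\dots,n\}$ one has $I^{p^*S}\cong(\Delta^1)^n$. I would define the section $s\pts\Delta(-,[n])\to(\Delta^1)^n$ via Yoneda by the $n$-simplex $(\phi_1,\dots,\phi_n)\in\Delta([n],[1])^n$, where $\phi_i(j)=1$ iff $j\geq i$, and the retraction $r\pts(\Delta^1)^n\to\Delta(-,[n])$ at each level by $(f_1,\dots,f_n)\mapsto\bigl(j\mapsto\textstyle\sum_{i=1}^n f_i(j)\bigr)$, using that a sum of monotone maps $[k]\to[1]$ is a monotone map $[k]\to[n]$. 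The identity $\sum_{i=1}^n\phi_i(j)=j$ on the universal $n$-simplex then gives $r\circ s=\mathrm{id}$, and Proposition~\ref{proposition1.5} applies.

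For surjectivity, since $f$ is cohesive, $f^!$ is fully faithful, so it is the direct image of a geometric morphism $c\pts\con\to\calF$ with $c^*=f_*$. It then suffices to prove $gc\pts\con\to\widehat{\Delta}$ surjective, since faithfulness of $(gc)^*=f_*g^*$ forces faithfulness of $g^*$. The inverse image of $gc$ sends $I=\Delta(-,[1])$ to $f_*(g^*I)=f_*(I')=\bflp([0,0],[0,1])=[0,1]$ in $\con$, so $gc$ coincides with the surjection of Corollary~\ref{CorFolklore}, finishing the argument. The main obstacle is the retract verification in $\widehat{\Delta}$; once that is in hand, both conclusions follow from the infrastructure already built up.
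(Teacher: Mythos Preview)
Your proposal is correct and follows essentially the same route as the paper: apply Proposition~\ref{proposition1.5} with the explicit section $\phi_i = a_i$ and the ``sum'' retraction to exhibit $\Delta(-,[n])$ as a retract of $\Delta(-,[1])^n$, and then deduce surjectivity exactly as in Proposition~\ref{PropBipointed} via the subtopos $c\pts\con\to\calF$ and Corollary~\ref{CorFolklore}. The paper additionally sketches an alternative to the retract step, using the fully faithful nerve $\cate\to\widehat{\Delta}$ and the obvious retraction $[1]^n\to[n]$ in $\cate$ given by cardinality, but your explicit construction is one of the two arguments offered there.
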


\proof (Notice that we can not apply Corollary \ref{corollary1.6} in this case since $\Delta$
does not have finite products.) In order to apply Proposition \ref{proposition1.5} it only remains to show that every representable object in 
$\widehat{\Delta}$ is a retract of a finite power of $\Delta(-,[1])$. This is surely well-known but
we have been unable to find an appropriate reference so we sketch a proof.

The inclusion $\Delta\to\cate$ induces a functor $\cate\to\widehat{\Delta}$ with a left adjoint, 
and it is shown in Corollary 4.3 in \cite{GabrielZisman} that this functor is full and faithful.
Now, for each $n\in\mathbb{N}$ the product $[1]^n$ in 
$\cate$ may
be identified with the Boolean algebra of parts of a set with $n$ elements. It is then clear that the ``cardinality'' map
$[1]^n\to [n]$ in $\cate$ is a retraction for any maximal chain $[n]\to [1]^n$. 
The embedding $\cate\to\widehat{\Delta}$ preserves this retract
showing that $\Delta(-,[n])$ is a retract of $\Delta(-,[1])^n$. 

Another possibility is to construct explicitly such a retraction. Indeed, notice that for every $j\in\{1,\dots,n\}$, we have 
the map $a_j\pts[n]\to [1]$ in $\Delta$ such that $a_j(i)=0$ for all $i<j$ and $a_j(i)=1$ for all $i\geq j$. The resulting
family $\langle \Delta(-,a_j)\rangle_{j=1}^n$ of maps of $\widehat{\Delta}$ determines a unique map $a\pts
\Delta(-,[n])\to \Delta(-,[1])^n$ to the product. In the opposite direction we define a map $b\pts \Delta(-,[1])^n\to
\Delta(-,[n])$ in $\widehat{\Delta}$ such that for every $[m]\in \Delta$, each family
$\langle h_j\pts[m]\to[1]\rangle_{j=1}^n$ in $\Delta([m],[1])^n$ and each ${i\in [m]}$,
\[
b_{[m]}(\langle h_j\pts[m]\to[1]\rangle_{j=1}^n)(i) =\sum_{j=1}^nh_j(i).
\]
We leave the details to the reader.

Surjectivity is proved as in Proposition~\ref{PropBipointed}, but using Corollary~\ref{CorFolklore} instead of~\ref{bipointedsurjection}.
\endproof

   An analogue of Propositions~\ref{PropBipointed} and \ref{PropSimplicial} holds for the classifier of connected distributive lattices (Example~{8.11} in \cite{Menni2014a}) and surely for many other classifiers for theories for which the unit interval is a model.

   It is relevant to mention that the condition ${p_! g_* = f_!}$ appears in Lemma~{2.2.16} in \cite{BungeFunkBook} for the case when ${g:\calF \rightarrow \calE}$ is a subtopos. Indeed, while our motivation comes form surjective examples, preservation of pieces is independent of surjectivity and there are natural examples of inclusions that preserve pieces. In this case, the intuition that the left adjoint is a `geometric realization' might need some adjustment.

   For instance, let $n\geq 1$ be an integer, and consider the inclusion ${i:\Delta_n\to\Delta}$ of the full subcategory of 
$\Delta$ that consists of
all those $[m]=\{0,\dots,m\}$ with $m\leq n$. This inclusion induces the geometric morphism
${g:\Psh{\Delta_n} \rightarrow \Psh{\Delta}}$  whose direct image functor is the coskeleton functor ${g_*=\cosk^n\pts \Psh{\Delta_n} \to\Psh{\Delta}}$, whereas the
inverse image is restriction of a simplicial set to $\Delta_n$, ${g^* = \trn : \Psh{\Delta} \rightarrow \Psh{\Delta_n}}$. It is well known that
this geometric morphism is essential (the extra left adjoint is the skeleton functor ${\skn:\Psh{\Delta_n} \rightarrow \Psh{\Delta}}$) and an
embedding. Now, both canonical ${f: \Psh{\Delta_n} \rightarrow \Set}$ and ${p:\Psh{\Delta} \rightarrow \Set}$ are pre-cohesive and we may assume that ${p g = f}$.

\begin{corollary}\label{CorTruncations} For any ${n\geq 1}$, ${g:\Psh{\Delta_n} \rightarrow \Psh{\Delta}}$ preserves pieces.
\end{corollary}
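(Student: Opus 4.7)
The plan is to apply Proposition~\ref{PropObviousCharOfGpp}, which reduces the assertion to two conditions on $g$: (i) that $g_* = \coskn$ preserves $\Set$-indexed coproducts, and (ii) that $f_!$ inverts the counit $\xi$ of $g$. Since $g$ is a geometric embedding (as noted just before the statement), its counit $\xi\pts g^* g_* \to 1_{\Psh{\Delta_n}}$ is already an isomorphism, so $f_!\xi$ is automatically an iso and (ii) holds for free. Everything therefore reduces to (i).

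For (i) I would invoke Lemma~\ref{lemma1.4}: it is enough that, for every $[m] \in \Delta$, the presheaf $g^*(\Delta(-,[m])) = \trn(\Delta[m])$ be indecomposable in $\Psh{\Delta_n}$, and since $f$ is pre-cohesive this amounts to $f_!(\trn(\Delta[m])) = 1$. To compute this I use that $g$ is essential with $g_! = \skn$, so by uniqueness of left adjoints $f_! = p_!\,\skn$. Hence
\[
f_!(\trn(\Delta[m])) \;=\; p_!(\skn(\trn(\Delta[m]))) \;=\; p_!(\sk^n(\Delta[m])),
\]
using the standard fact that the counit of $\skn \dashv \trn$ picks out the $n$-skeleton as a subobject of a simplicial set. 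For $n \geq 1$ the simplicial set $\sk^n(\Delta[m])$ already contains every vertex $0,\dots,m$ of $\Delta[m]$ together with every edge $(i,j)$ with $i\leq j$, so its $1$-skeleton is a connected graph; computing $p_!$ via the connector $0,1\pts 1\to\Delta(-,[1])$ (Example~{8.10} in \cite{Menni2014a}), the induced coequalizer identifies all vertices and yields $1$, as required.

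The only real obstacle is the bookkeeping around essentiality: one must verify that $f_! = p_!\,\skn$ is correct, which follows from $f = pg$ together with the fact that $g^* = \trn$ has $\skn$ as a left adjoint. Everything else is a direct application of the preceding machinery; note also that the hypothesis $n \geq 1$ is genuinely used, since for $n = 0$ the $0$-truncation of $\Delta[m]$ has no edges and $f_!(\tr^0(\Delta[m])) = m+1 \neq 1$ whenever $m \geq 1$, so $g^*(\Delta(-,[m]))$ fails to be indecomposable in $\Psh{\Delta_0}$.
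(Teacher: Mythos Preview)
Your proof is correct, but it follows a different path from the paper's. The paper invokes Proposition~\ref{proposition1.5}: it uses that the representable $I = \Delta(-,[1])$ is a connector for $p$, that $g^* I = \Delta_n(-,[1])$ is a connector for $f$, that $g^* I$ is connected, and that every $\Delta(-,[m])$ is a retract of a finite power of $I$ (this last fact having been established in the proof of Proposition~\ref{PropSimplicial}). Connectedness of $g^*(\Delta(-,[m]))$ then follows because it is a retract of a finite power of the connected object $g^* I$, and the connector hypothesis handles the remaining half of ``preserving pieces'' via Lemma~\ref{lemma1.3}.

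You instead exploit two special features of this example that the paper's general machinery does not use: that $g$ is an \emph{embedding} (so the counit $\xi$ is already an iso, making condition (ii) of Proposition~\ref{PropObviousCharOfGpp} automatic --- this is exactly Corollary~\ref{Corgsubstarff}, stated just after the result), and that $g$ is \emph{essential} (so $f_! = p_! \skn$, allowing a direct computation of $f_!(\trn\Delta[m])$ as $p_!$ of the $n$-skeleton). Your argument therefore bypasses connectors entirely and is more elementary for this particular $g$. The paper's route, by contrast, shows how the example fits the general connector-based criterion of Proposition~\ref{proposition1.5}, at the cost of importing the retract calculation from Proposition~\ref{PropSimplicial}. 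Both arguments ultimately pass through Lemma~\ref{lemma1.4} for the coproduct-preservation half; they diverge only in how connectedness of $g^*(\Delta(-,[m]))$ is verified and in how the second half of ``preserving pieces'' is discharged.
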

\begin{proof}
Follows from Proposition~\ref{proposition1.5}. Indeed,  the calculations in Proposition~\ref{PropSimplicial} show that every representable in $\Psh{\Delta}$ is a retract of a power of the connector ${\Delta(\_, [1]) = I}$ in $\Psh{\Delta}$. It remains to show that ${g^* I = \Delta_n(\_,[1])}$ is a connector in ${\Psh{\Delta_n}}$, but this holds just as in the case of ${\Psh{\Delta}}$; see Example~{8.10} in  \cite{Menni2014a}. Incidentally, using essentially the same idea, one may prove that ${\Delta_n \rightarrow \Delta}$ is cofinal in the sense of IX.3 in \cite{maclane}, so  ${f_! g^* \cong p_!}$ because $g^*$ is restriction along ${\Delta_n \rightarrow \Delta}$ and ${f_!}$, ${p_!}$ take colimits.
\end{proof}

    Regardless of connectors, when $g_*$ is full and faithful the second condition of Proposition~\ref{PropObviousCharOfGpp} is trivially satisfied, so we may conclude the following.

\begin{corollary}\label{Corgsubstarff}
Let ${p:\calE \rightarrow \calS}$ be pre-cohesive and let ${g:\calF \rightarrow \calE}$ be a subtopos such that ${f = p g:\calF \rightarrow \calS}$ is pre-cohesive. Then $g$  preserves pieces if and only if ${g_*:\calF \rightarrow \calE}$ preserves $\calS$-indexed coproducts.
\end{corollary}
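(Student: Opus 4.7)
The plan is to invoke Proposition~\ref{PropObviousCharOfGpp}, which reduces the question to whether $f_!$ inverts the counit of $g$ once we know that $g_*$ preserves $\calS$-indexed coproducts. The key extra fact to exploit is the standard characterization of subtoposes: $g:\calF \rightarrow \calE$ is a subtopos precisely when $g_*$ is fully faithful, equivalently when the counit $\xi:g^* g_* \rightarrow 1_{\calF}$ is a natural isomorphism.

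For the forward direction, if $g$ preserves pieces then $g_*$ preserves $\calS$-indexed coproducts by Definition~\ref{DefGpp}; nothing about $g$ being a subtopos is needed here.

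For the converse, assume that $g_*$ preserves $\calS$-indexed coproducts. Since $g$ is a subtopos, $\xi:g^* g_* \rightarrow 1_{\calF}$ is an iso, so $f_!\xi:f_! g^* g_* \rightarrow f_!$ is also an iso. That is, $f_!$ inverts the counit of $g$. Applying the equivalence (1)$\Leftrightarrow$(2) of Proposition~\ref{PropObviousCharOfGpp}, the adjunction $g$ preserves pieces.

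There is no serious obstacle here: the only subtle point is reading off that fully faithfulness of $g_*$ is exactly what is needed to trivialize the second clause in Definition~\ref{DefGpp}, and then invoking Proposition~\ref{PropObviousCharOfGpp} packages the remaining content. The canonical iso $\lambda:p_! g_* \rightarrow f_!$ guaranteed by the proposition is, in this situation, just the mate of $\nu_{p^*}^{-1}$ precomposed with $\xi^{-1}$, but we do not need to write it out explicitly to conclude.
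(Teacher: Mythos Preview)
Your proof is correct and follows exactly the same approach as the paper: the paper observes (in the sentence preceding the corollary) that when $g_*$ is fully faithful the second condition of Proposition~\ref{PropObviousCharOfGpp} is trivially satisfied, which is precisely your argument that $\xi$ and hence $f_!\xi$ is an iso. The forward direction via Definition~\ref{DefGpp} is likewise the intended reading.
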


   In the examples of Corollary~\ref{CorTruncations} both the domain and codomain of the relevant geometric morphism are presheaf toposes. In contrast, consider the next result involving locally connected coverages (Section~{C3.3} in \cite{elephant}).

\begin{lemma}
Let $\calC$ be a small category with terminal object and such that every object has a point, so that $p\pts\widehat{\calC}\to
\con$ is precohesive. Let $(\calC,J)$ be a site and denote the inclusion by $g\pts\sh(\calC,J)\to\widehat{\calC}$ and the
composite $pg\pts\sh(\calC,J)\to\con$ by $f$. If the coverage $J$ is locally connected, then $g$ preserves pieces.
\end{lemma}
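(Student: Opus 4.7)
The plan is to reduce to Corollary~\ref{Corgsubstarff}. Since $g\pts \sh(\calC,J)\to\widehat{\calC}$ is a subtopos and both $p$ and $f=pg$ are pre-cohesive by hypothesis, that corollary tells us that $g$ preserves pieces if and only if the direct image $g_*$ preserves $\Set$-indexed coproducts. By Definition~\ref{DefPreservationOfSindexedCopros} this is exactly the assertion that the natural transformation ${\nu_{p^*}:p^*\to g_*g^*p^*}$ is an isomorphism, so I would check that.

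Unwinding the description of $p$, for any set $A$ the presheaf $p^*A$ is the constant presheaf $\Delta A$ with value $A$, while $g^*p^*A=a(\Delta A)$, where $a$ denotes sheafification for the topology $J$. Since $g_*$ is the fully faithful inclusion of sheaves into presheaves, the component ${\nu_{(p^*A)}}$ is an iso precisely when $\Delta A$ is already a $J$-sheaf. So the whole argument reduces to a single assertion: for $J$ locally connected, every constant presheaf on $\calC$ is a $J$-sheaf.

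The main (and only nontrivial) step is therefore to invoke the standard characterization of locally connected coverages recorded in Section~{C3.3} of \cite{elephant}: a coverage $J$ on $\calC$ is locally connected exactly when every $J$-covering sieve, viewed as a full subcategory of the corresponding slice of $\calC$, is a nonempty connected category. For such a sieve $S$ on $C$ the limit of the restriction of $\Delta A$ to $S$ is, by connectedness, just $A$ itself, so the sheaf condition for $\Delta A$ at every cover is automatically satisfied. Consequently $\Delta A$ is a sheaf, $\nu_{p^*}$ is an iso, and $g$ preserves pieces. I do not anticipate any serious obstacle: the argument is essentially a composition of Corollary~\ref{Corgsubstarff} with the classical fact that locally connected sites are precisely those on which constant presheaves are sheaves.
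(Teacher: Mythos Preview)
Your approach is correct and essentially identical to the paper's: both reduce via Corollary~\ref{Corgsubstarff} to the classical fact that constant presheaves are $J$-sheaves when $J$ is locally connected (the paper simply cites \cite{Johnstone2011} for this, whereas you sketch the connected-sieve argument from the Elephant). One small slip: you write that $f$ is pre-cohesive ``by hypothesis'', but the lemma does not assume this---the paper explicitly invokes \cite{Johnstone2011} to obtain it from local connectedness of $J$, and you need to do the same before Corollary~\ref{Corgsubstarff} applies.
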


\proof The geometric morphism $f\pts\sh(\calC,J)\to\con$ is pre-cohesive by results in \cite{Johnstone2011}. 
By Corollary~\ref{Corgsubstarff} all we have to do is show that $g_*\pts\sh(\calC,J)\to\widehat{\calC}$
preserves $\con$-indexed coproducts, but notice that this fact
is perhaps more familiar than over an arbitrary base: ``local connectedness of the site implies that constant presheaves on $\calC$ are $J$-sheaves''; Propositon~{1.3} loc.\ cit..
\endproof

   For instance, the inclusion ${\sh(\bflp, K) \rightarrow \Psh{\bflp}}$ preserves pieces (over $\Set$). Something analogous happens to the other examples of cohesive toposes in \cite{Menni2014a}.

\section{The preservation of weakly Kan objects}
\label{SecPPandWK}

   The comparison between Serre fibrations and Kan fibrations allows Gabriel and Zisman to conclude that the singular complex of any topological space is a Kan complex. (See~{VII.1.6} in \cite{GabrielZisman}.) We prove an analogous result in this section.

   Let ${f:\calF \rightarrow \calS}$ and ${p:\calE \rightarrow \calS}$ be pre-cohesive toposes. Let ${g:\calF\rightarrow \calE}$ be such that ${p g  = f:\calF \rightarrow \calS}$.      
   We will show that if $g$ preserves pieces and $K$ is a  weakly Kan object  in $\calF$ then ${g_* K}$ is weakly Kan in $\calE$. Disregarding coherence for a moment, the argument is easy to sketch. Indeed, if we let ${A \in  \calS}$, we have that
\[ p_!((g_* X)^{p^* A}) = p_!(g_*(X^{g^*(p^* A)})) = f_!(X^{f^* A}) = (f_! X)^A = (p_! (g_* X))^A  \]
suggesting that ${g_*:\calF \rightarrow \calE}$ preserves weakly Kan objects. 
   Now, concerning an actual proof, the main coherence fact is proved in Lemma~\ref{LemKappaOkappa} below. Before that, we state some auxiliary facts.
   
\begin{lemma}\label{LemPpiecesPreservesProds}
 If ${g^*:\calE \rightarrow \calF}$ preserves pieces then the following diagram 
$$\xymatrix{
p_! X \times p_! Y \ar[rrr]^-{\rho \times \rho} & & & f_!(g^* X) \times f_!(g^* Y) \\
p_!(X\times Y)\ar[u]^-{\twopl{p_! \pi_0}{p_!  \pi_1}} \ar[r]_-{\rho} & f_!(g^*(X\times Y)) \ar[rr]_-{f_!\twopl{g^*\pi_0}{g^* \pi_1}} & & f_!((g^* X) \times (g^* Y)) \ar[u]_-{\twopl{f_! \pi_0}{f_!  \pi_1}} \\
}$$
commutes for every $X$ and $Y$ in $\calE$.
\end{lemma}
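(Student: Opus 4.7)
The plan is to verify commutativity by exploiting the universal property of the product at the right-hand side, reducing the outer diagram to two instances of naturality of $\rho : p_! \Rightarrow f_! g^*$ applied to the canonical projections. Concretely, I would post-compose both paths with each of the two product projections $f_!(g^*X) \times f_!(g^*Y) \to f_!(g^*X)$ and $f_!(g^*X) \times f_!(g^*Y) \to f_!(g^*Y)$ in turn, so that it suffices to check equality of two morphisms $p_!(X\times Y) \to f_!(g^*X)$ and $p_!(X\times Y) \to f_!(g^*Y)$.

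For $i\in\{0,1\}$, composing the top path with the $i$-th projection collapses the pairing $\twopl{p_!\pi_0}{p_!\pi_1}$ and picks out the $i$-th copy of $\rho$ in $\rho\times\rho$, yielding simply $\rho \circ p_!\pi_i$. Composing the bottom path with the $i$-th projection yields $f_!\pi_i \circ f_!\twopl{g^*\pi_0}{g^*\pi_1} \circ \rho_{X\times Y}$, which, by functoriality of $f_!$ and the defining property of pairings ($\pi_i \circ \twopl{g^*\pi_0}{g^*\pi_1} = g^*\pi_i$), simplifies to $f_!(g^*\pi_i) \circ \rho_{X\times Y}$. The two resulting morphisms coincide by naturality of $\rho$ at the projection $\pi_i : X\times Y \to X$ (respectively $Y$), which gives exactly $\rho \circ p_!\pi_i = f_!(g^*\pi_i) \circ \rho$.

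The hypothesis that $g$ preserves pieces enters only to guarantee, via Lemma~\ref{LemIndexedCoprosAndPreservationOfPieces}, that $\rho$ exists as a natural transformation in the first place; no finer use of the hypothesis is required. The only real obstacle is bookkeeping — keeping track of the subscripts on $\rho$ and of the interaction of the finite-product preserving functors $p_!$, $f_!$, $g^*$ with the pairings — but the computation itself is pure abstract nonsense and should fit comfortably in a couple of short diagrams.
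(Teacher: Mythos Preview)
Your argument is correct and is exactly the routine verification the paper has in mind: the proof in the paper reads ``This simple fact is left for the reader,'' and the intended reasoning is precisely the naturality-of-$\rho$ check against the two projections that you carry out. Your observation that the hypothesis is only needed to ensure $\rho$ is defined (via Lemma~\ref{LemIndexedCoprosAndPreservationOfPieces}) is also on point.
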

\begin{proof}
This simple fact is left for the reader.
\end{proof}

   Recall that we denoted by ${\kappa = \kappa^{p_!}_{X, Y}:p_!(Y^X) \rightarrow (p_! Y)^{p_! X}}$ the canonical natural transformation determined by the product preserving ${p_!:\calE\rightarrow \calS}$.

\begin{lemma}\label{LemKappa} The following diagram 
$$\xymatrix{
(p_! Y)^{(p_! X)} \times p_! X \ar[rr]^-{ev} && p_! Y \\
p_!(Y^X) \times p_! X \ar[u]^-{\kappa_{X, Y} \times id}  && \ar[ll]_-{\cong}^-{\twopl{p_! \pi_0}{ p_! \pi_1}} p_! (Y^X \times X)  \ar[u]_-{p_! ev}
}$$
commutes.
\end{lemma}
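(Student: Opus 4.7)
The plan is to simply unfold the definition of $\kappa = \kappa^{p_!}_{X,Y}$ given at the beginning of Section~\ref{SecPreCohesive}. Recall that $\kappa_{X,Y} : p_!(Y^X) \to (p_! Y)^{p_! X}$ was defined as the exponential transpose of the composite
$$p_!(Y^X) \times p_! X \xrightarrow{\;\cong\;} p_!(Y^X \times X) \xrightarrow{\;p_! \ev\;} p_! Y,$$
where the first map is $\langle p_!\pi_0, p_!\pi_1\rangle^{-1}$ (using that $p_!$ preserves finite products).

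By the universal property of exponentiation, the exponential transpose $\widetilde{h} : Z \to Y^X$ of a map $h : Z \times X \to Y$ is the unique morphism such that $\ev \circ (\widetilde{h} \times \mathrm{id}_X) = h$. Applying this with $Z = p_!(Y^X)$, $X$ replaced by $p_! X$, $Y$ replaced by $p_! Y$, and $h$ being the above composite, we obtain precisely the claimed equality
$$\ev \circ (\kappa_{X,Y} \times \mathrm{id}_{p_! X}) \;=\; (p_! \ev) \circ \langle p_!\pi_0, p_!\pi_1\rangle^{-1},$$
which is the commutativity of the diagram in the statement.

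No obstacle is expected here; the lemma is a pure reformulation of how $\kappa$ was defined, and is stated only to fix notation for its subsequent use (presumably in Lemma~\ref{LemKappaOkappa}, where the coherence between $\kappa$ for $p_!$ and $\kappa$ for $f_!$ will actually require work).
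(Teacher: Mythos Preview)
Your proof is correct and matches the paper's treatment: the paper states Lemma~\ref{LemKappa} without proof, since it is indeed an immediate restatement of the definition of $\kappa$ via the universal property of the exponential transpose. Your unfolding of that definition is exactly the intended argument.
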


   Similarly, since ${f_!:\calF \rightarrow \calS}$ preserves finite products then there exists, for every $X$ and $Y$ in $\calF$, a unique ${\okappa = \kappa^{f_!}_{X, Y}:f_!(Y^X) \rightarrow (f_! Y)^{(f_! X)}}$ making an analogous diagram commute.

   The next result implies the well known fact that geometric morphisms are canonically enriched in their codomains.
   
\begin{lemma}\label{LemGisStrong}
There exists a unique natural ${\gamma:g^*((g_* F)^E) \rightarrow F^{g^* E}}$ such that the following diagram
$$\xymatrix{
g^*((g_* F)^E) \times g^* E \ar[rr]^-{\gamma \times id} & & F^{g^* E} \times g^* E \ar[r]^-{ev} & F \\
g^*((g_* F)^E \times  E)\ar[u]_-{\cong}^-{\twopl{g^* \pi_0}{g^* \pi_1}}
\ar[rrr]_-{g^* ev} &&& g^*(g_* F) \ar[u]_-{\xi}
}$$
commutes. Moreover, the composite
$$\xymatrix{
(g_* F)^E \ar[r]^-{\nu} & g_* (g^* ((g_* F)^E)) \ar[r]^-{g_* \gamma} & g_*(F^{g^* E})
}$$
is an iso. 
\end{lemma}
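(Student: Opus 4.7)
The map $\gamma$ will be obtained by exponential transpose, and the stated diagram forces the construction. Since $g^*$ preserves finite products, the comparison ${\langle g^*\pi_0, g^*\pi_1\rangle: g^*((g_*F)^E\times E)\to g^*((g_*F)^E)\times g^*E}$ is invertible, so I may form the composite
\[
g^*((g_*F)^E)\times g^*E \xrightarrow{\cong} g^*((g_*F)^E\times E)\xrightarrow{g^*\ev}g^*(g_*F)\xrightarrow{\xi}F
\]
and let $\gamma$ be its transpose under the adjunction $(-)\times g^*E \dashv (-)^{g^*E}$ in $\calF$. Existence, uniqueness and commutativity of the displayed triangle are then immediate from the universal property of $F^{g^*E}$.

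For the second claim, set ${\tilde{\gamma} := (g_*\gamma)\circ \nu_{(g_*F)^E}}$. By the standard formula for transposing through the unit $\nu$ (together with the triangle identity $\xi_{g^*X}\circ g^*\nu_X = 1$), the map $\tilde{\gamma}$ is precisely the $g^*\dashv g_*$-transpose of $\gamma$. I would prove $\tilde{\gamma}$ is iso by Yoneda: I check that post-composition with $\tilde{\gamma}$ induces a bijection ${\calE(D,(g_*F)^E) \to \calE(D, g_*(F^{g^*E}))}$ for every $D$ in $\calE$. Both sides can be computed independently as
\[
\calE(D,(g_*F)^E)\cong \calE(D\times E, g_*F)\cong \calF(g^*(D\times E),F)\cong \calF(g^*D\times g^*E, F)
\]
and
\[
\calE(D, g_*(F^{g^*E}))\cong \calF(g^*D, F^{g^*E})\cong \calF(g^*D\times g^*E, F),
\]
using (respectively) the internal-hom adjunction in $\calE$, the adjunction $g^*\dashv g_*$, the preservation of finite products by $g^*$, and the internal-hom adjunction in $\calF$.

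The remaining task is to verify that the square induced by $\tilde{\gamma}_*$ commutes with these two chains of bijections. Unwinding definitions, a map $h\pts D\to (g_*F)^E$ is carried by the first chain to $\xi\circ g^*(\ev\circ(h\times 1_E))\circ\langle g^*\pi_0, g^*\pi_1\rangle^{-1}$, while along the second chain its image $\tilde\gamma\circ h$ is carried to the same expression, by the defining diagram of $\gamma$, naturality of $\xi$, $\nu$ and $\ev$, and the triangle identity already mentioned. Yoneda then yields that $\tilde\gamma$ is iso. The main obstacle is purely bookkeeping: no individual step is conceptually deep, but keeping track of the four adjunctions in play and of the product-preservation isomorphisms requires care.
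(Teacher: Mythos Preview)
Your proposal is correct and follows essentially the same approach as the paper. Both arguments rest on the same chain of natural bijections
\[
\calE(X,(g_*F)^E)\cong\calE(X\times E,g_*F)\cong\calF(g^*(X\times E),F)\cong\calF(g^*X\times g^*E,F)\cong\calF(g^*X,F^{g^*E})\cong\calE(X,g_*(F^{g^*E}));
\]
the paper simply chases the identity on $X=(g_*F)^E$ through this chain to obtain the iso directly and then identifies the result with $(g_*\gamma)\circ\nu$, whereas you first define $\gamma$ by transpose and then verify via Yoneda that $(g_*\gamma)\circ\nu$ realizes the composite bijection, which amounts to the same bookkeeping in a slightly different order.
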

\begin{proof}
Preservation of binary products allows the following calculation with natural isos
\[ \calE(X, (g_* F)^E) \cong \calE(X\times E, g_* F) \cong \calF(g^*(X\times E), F) \cong \] \[ \cong \calF(g^* X \times g^* E, F) \cong \calF(g^* X, F^{g^* E}) \cong \calE(X, g_*(F^{g^* E})) \]
so, if we take ${X = (g_* F)^E}$, take the identity on it, and follow the instructions given by the above calculation then we get an iso ${(g_* F)^E \rightarrow g_*(F^{g^* E})}$.
In more detail, the natural iso ${\calE(X, (g_* F)^E) \cong \calF(g^* X, F^{g^* E})}$ sends the the identity on ${X = (g_* F)^E}$ to the unique map ${\gamma:g^*((g_* F)^E) \rightarrow  F^{g^* E}}$ such that the diagram in the statement commutes. The natural iso ${\calF(g^* X, F^{g^* E}) \cong \calE(X, g_*(F^{g^* E}))}$ sends $\gamma$ to the composite in the last part of the statement. As we have already mentioned, this composite must be an iso.
\end{proof}

   We emphasize the following.
   
\begin{lemma} If  ${g:\calF \rightarrow \calE}$ preserves pieces then the following composite 
$$\xymatrix{
p_!((g_* F)^E) \ar[r]^-{\rho} 
 & f_! (g^* ((g_* F)^E)) \ar[rr]^-{f_! \gamma} & & f_!(F^{g^* E})
}$$
is an iso.
\end{lemma}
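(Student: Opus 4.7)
The plan is to reduce the statement to general properties of $\rho$, $\lambda$ and $\gamma$ already established. By Lemma~\ref{LemIndexedCoprosAndPreservationOfPieces}, since $g$ preserves pieces the map $\rho : p_! \to f_! g^*$ is the inverse of $\varrho$, and so the first factor $\rho_{(g_* F)^E}$ of the composite in question is itself an iso. Moreover, by Proposition~\ref{PropObviousCharOfGpp}, the natural transformation $\lambda = (f_! \xi)(\rho_{g_*}) : p_! g_* \to f_!$ is also an iso. The strategy is therefore to exhibit the composite $(f_! \gamma)\circ \rho_{(g_* F)^E}$ as a composite of isos built out of $\rho$, $\lambda$, and the iso supplied by Lemma~\ref{LemGisStrong}.

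Concretely, set $\phi := (g_* \gamma) \circ \nu_{(g_* F)^E} : (g_* F)^E \to g_*(F^{g^* E})$, which is an iso by Lemma~\ref{LemGisStrong}. I would apply naturality of $\rho : p_! \to f_! g^*$ at $\phi$ to get the commutative square
$$\xymatrix{
p_!((g_* F)^E) \ar[r]^-{\rho} \ar[d]_-{p_! \phi} & f_! g^*((g_* F)^E) \ar[d]^-{f_! g^* \phi} \\
p_! g_*(F^{g^* E}) \ar[r]_-{\rho_{g_*(F^{g^* E})}} & f_! g^* g_*(F^{g^* E})
}$$
and then postcompose with $f_! \xi_{F^{g^* E}}$. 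The bottom row followed by $f_! \xi$ is exactly $\lambda_{F^{g^* E}}$, so the outer composite is $\lambda_{F^{g^* E}} \circ p_! \phi$.

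The remaining step is to recognise the top-row-then-$f_!\xi$ side as $(f_! \gamma)\circ \rho$. Expanding $\phi$, one computes
$$\xi_{F^{g^* E}} \circ g^* \phi = \xi_{F^{g^* E}} \circ g^* g_* \gamma \circ g^* \nu_{(g_* F)^E} = \gamma \circ \xi_{g^*((g_* F)^E)} \circ g^* \nu_{(g_* F)^E} = \gamma,$$
where the middle equality is naturality of $\xi$ and the last uses the triangle identity $\xi_{g^* X} \circ g^* \nu_X = \mathrm{id}$. Applying $f_!$ and precomposing with $\rho$ yields $(f_! \gamma)\circ \rho = \lambda_{F^{g^* E}} \circ p_! \phi$, a composite of three isos, so $(f_! \gamma) \circ \rho$ is an iso.

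The only delicate point is the naturality/triangle-identity juggle in the last paragraph that identifies $\xi \circ g^*\phi$ with $\gamma$; this is really the content of the fact, implicit in Lemma~\ref{LemGisStrong}, that $\gamma$ is the adjoint transpose of $\phi$. Everything else is formal and uses only that $\rho$ and $\lambda$ are iso, which is precisely the characterization of preservation of pieces given by Proposition~\ref{PropObviousCharOfGpp}.
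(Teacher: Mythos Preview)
Your proof is correct and essentially the same as the paper's: both establish the identity $(f_! \gamma)\circ \rho = \lambda_{F^{g^* E}} \circ p_!\phi$ with $\phi = (g_*\gamma)\circ\nu$, and conclude because $\lambda$ and $\phi$ are iso. The paper packages this as a single commutative diagram (with top row $p_!\nu$ followed by $p_!(g_*\gamma)$ and vertical maps $\lambda$), whereas you reach the same identity via naturality of $\rho$ at $\phi$ together with the explicit computation $\xi\circ g^*\phi = \gamma$; the triangle-identity and naturality steps you spell out are precisely what makes the paper's diagram commute.
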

\begin{proof}
Just notice that the following diagram
$$\xymatrix{
p_!((g_* F)^E) \ar[rd]_-{\rho} \ar[r]^-{p_! \nu} & p_!(g_* (g^* ((g_* F)^E))) \ar[d]_-{\lambda} \ar[rr]^-{p_! (g_* \gamma)} && p_!(g_*(F^{g^* E})) \ar[d]^-{\lambda} \\
 & f_! (g^* ((g_* F)^E)) \ar[rr]_-{f_! \gamma} & & f_!(F^{g^* E})
}$$
commutes.
\end{proof}

   The above simple facts were stated to efficiently prove the next result.

\begin{lemma}\label{LemKappaOkappa}
 If   ${g:\calF \rightarrow \calE}$ preserves pieces then the following diagram
$$\xymatrix{
p_!((g_* F)^E) \ar[d]_-{\kappa} \ar[r]^-{\rho}  & f_! (g^* ((g_* F)^E)) \ar[rr]^-{f_! \gamma} & & f_!(F^{g^* E}) \ar[d]^-{\okappa} \\
(p_!(g_* F))^{p_! E} \ar[rrr]_-{\lambda^{\varrho}} &  & & (f_! F)^{f_!(g^* E)}
}$$
commutes, for every $E$ in $\calE$ and $F$ in $\calF$. (Notice that the horizontal maps are isos.)
\end{lemma}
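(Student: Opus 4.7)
The plan is to transpose both composites in the square along the adjunction ${(-) \times f_!(g^*E) \dashv (-)^{f_!(g^*E)}}$ in $\calS$; both then become maps ${p_!((g_*F)^E) \times f_!(g^*E) \to f_!F}$, and the resulting equation can be unfolded by the lemmas already in place.

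For the lower transpose, the defining property of $\lambda^\varrho$ gives $\ev \circ (\lambda^\varrho \times id) = \lambda_F \circ \ev \circ (id \times \varrho)$, and Lemma~\ref{LemKappa} rewrites $\ev \circ (\kappa \times id)$ as $p_!\ev \circ \twopl{p_!\pi_0}{p_!\pi_1}^{-1}$. Hence the lower transpose equals
\[
\lambda_F \circ p_!\ev \circ \twopl{p_!\pi_0}{p_!\pi_1}^{-1} \circ (id \times \varrho).
\]

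For the upper transpose I would first apply the $\okappa$-analogue of Lemma~\ref{LemKappa} and the naturality of $\twopl{f_!\pi_0}{f_!\pi_1}^{-1}$ with respect to $\gamma \times id$ to arrive at $f_!\ev \circ f_!(\gamma \times id) \circ \twopl{f_!\pi_0}{f_!\pi_1}^{-1} \circ (\rho \times id)$. Applying $f_!$ to Lemma~\ref{LemGisStrong} and inserting $f_!\twopl{g^*\pi_0}{g^*\pi_1}$ against its inverse, I could then replace $f_!\ev \circ f_!(\gamma \times id) \circ f_!\twopl{g^*\pi_0}{g^*\pi_1}$ by $f_!\xi \circ f_!g^*\ev$. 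Next, using the identity $\rho \times id = (\rho \times \rho) \circ (id \times \varrho)$ (which holds because $\rho \circ \varrho = id$ by Proposition~\ref{PropObviousCharOfGpp}) together with Lemma~\ref{LemPpiecesPreservesProds} applied to ${X = (g_*F)^E}$ and ${Y = E}$, the composite $f_!\twopl{g^*\pi_0}{g^*\pi_1}^{-1} \circ \twopl{f_!\pi_0}{f_!\pi_1}^{-1} \circ (\rho \times \rho)$ collapses to $\rho \circ \twopl{p_!\pi_0}{p_!\pi_1}^{-1}$. Naturality of $\rho$ for ${\ev:(g_*F)^E \times E \to g_*F}$ then commutes $\rho$ past $p_!\ev$, and the remaining composite $f_!\xi \circ \rho_{g_*F}$ is exactly $\lambda_F$ by Proposition~\ref{PropObviousCharOfGpp}. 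The upper transpose therefore matches the expression displayed above.

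The only real obstacle is coherence bookkeeping: every rewrite is forced by a lemma already established, but one must be careful about the order in which the product-preservation isos for $p_!$, $f_!$, and $g^*$ are inserted, and about which side of each tensor $\rho$ or $\varrho$ appears on.
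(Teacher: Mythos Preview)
Your proposal is correct and follows essentially the same route as the paper: both transpose the two composites along ${(-)\times f_!(g^*E)\dashv (-)^{f_!(g^*E)}}$, then reduce each side to $\lambda_F \circ p_!\ev \circ \twopl{p_!\pi_0}{p_!\pi_1}^{-1} \circ (id\times\varrho)$ using Lemmas~\ref{LemKappa}, \ref{LemGisStrong}, \ref{LemPpiecesPreservesProds}, naturality of $\rho$, and the identity $\lambda=(f_!\xi)\rho_{g_*}$. The only cosmetic difference is that the paper first precomposes both sides with the iso $(id\times\rho)\circ\twopl{p_!\pi_0}{p_!\pi_1}$ from $p_!((g_*F)^E\times E)$ and organizes the rewrites as three commuting diagrams, whereas you work directly with equations and insert the step $\rho\times id=(\rho\times\rho)\circ(id\times\varrho)$ explicitly; the underlying logic is identical.
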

\begin{proof}
We can consider the transpositions ${(p_!(g_* F))^{p_! E} \times f_!(g^* E) \rightarrow f_! F}$ and we will find it useful to pre-compose with the composite
$$\xymatrix{
p_!((g_* F)^E) \times E)  \ar[rr]^-{\twopl{p_! \pi_0}{p_! \pi_1}} &&p_!((g_* F)^E) \times p_! E \ar[r]^-{id\times \rho} & (p_!(g_* F))^{p_! E} \times f_!(g^* E)
}$$
which is obviously an iso. If we do this to the left-bottom composite of the rectangle in the statement then we get the following diagram
$$\xymatrix{
p_!((g_* F)^E) \times p_! E  \ar[rd]|-{\kappa\times id} \ar[r]^-{\kappa\times\rho}  & (p_!(g_* F))^{p_! E} \times f_! (g^* E) \ar[d]_-{id\times\varrho} \ar[rr]^-{\lambda^{\varrho} \times id} & & (f_! F)^{f_! (g^* E)} \times f_! (g^* E) \ar[d]^-{ev} \\ 
p_!((g_* F)^E) \times E) \ar@/_1.5pc/[rr]_-{p_! ev} \ar[u]^-{\twopl{p_! \pi_0}{p_! \pi_1}} & p_!((g_* F)^E) \times p_! E \ar[r]^-{ev} & p_!(g_* F) \ar[r]^-{\lambda} & f_! F \\
 }$$
using Lemma~\ref{LemKappa}. On the other hand, we first observe that the following diagram
$$\xymatrix{
f_!(g^*((g_* F)^E)) \times f_!(g^* E) \ar[r]^-{f_! \gamma \times id} & f_!(F^{g^* E}) \times f_!(g^* E) \ar[r]^-{\okappa \times id} & (f_! F)^{f_! (g^* E)} \times f_!(g^* E) \ar[d]^-{ev} \\
f_!(g^*((g_* F)^E) \times g^* E) \ar[u]_-{\twopl{f_! \pi_0}{f_! \pi_1}} \ar[r]_-{f_!(\gamma\times id)} & f_!(F^{g^* E} \times g^* E) \ar[u]_-{\twopl{f_! \pi_0}{f_! \pi_1}}  \ar[r]_-{f_! ev} & f_! F
}$$
commutes by the analogue of Lemma~\ref{LemKappa} for $\okappa$.  So, taking ${X = (g_* F)^E}$ and ${Y = E}$ in Lemma~\ref{LemPpiecesPreservesProds}, and pasting with the previous diagram, it is clear that it only remains to observe that the following diagram
$$\xymatrix{
p_!((g_* F)^E \times E) \ar[dd]_-{p_! ev} \ar[r]^-{\rho} & f_!(g^*((g_* F)^E \times E)) \ar[rr]^-{f_!\twopl{g^*\pi_0}{g^* \pi_1}} \ar[dd]^-{f_!(g^* ev)} & & f_!(g^*((g_* F)^E) \times g^* E)  \ar[d]^-{f_!(\gamma\times id)} \\
& & & f_!(F^{g^* E} \times g^* E)  \ar[d]^-{f_! ev} \\
p_!(g_* F) \ar@/_1pc/[rrr]_-{\lambda} \ar[r]^-{\rho} & f_!(g^* (g_* F)) \ar[rr]^-{f_! \xi} &  & f_! F
}$$
commutes, using Lemma~\ref{LemGisStrong}.
\end{proof}

   We can now prove the promised result.
   
\begin{proposition}\label{PropDirectImagePreservesWK}
If ${g:\calF \rightarrow \calE}$ preserves pieces then ${g_*:\calF \rightarrow \calE}$ preserves weakly Kan objects. 
\end{proposition}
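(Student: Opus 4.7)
The plan is to deduce the result as an essentially immediate consequence of Lemma~\ref{LemKappaOkappa}, which has already assembled all of the coherence. Fix a weakly Kan object $K$ in $\calF$ and an arbitrary object $A$ in $\calS$; I want to show that $\kappa = \kappa^{p_!}_{p^*A,\, g_*K} : p_!((g_*K)^{p^*A}) \rightarrow (p_!(g_*K))^{p_!(p^*A)}$ is an iso, since by Definition~\ref{DefWeaklyKan} this is what it means for $g_*K$ to be weakly Kan.

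First I would invoke Lemma~\ref{LemKappaOkappa} with $E = p^*A$ and $F = K$ to obtain the commutative square
\[
\xymatrix{
p_!((g_*K)^{p^*A}) \ar[d]_-{\kappa} \ar[rr]^-{(f_!\gamma)\circ\rho}  & & f_!(K^{g^*(p^*A)}) \ar[d]^-{\okappa} \\
(p_!(g_*K))^{p_!(p^*A)} \ar[rr]_-{\lambda^{\varrho}} &  & (f_! K)^{f_!(g^*(p^*A))}
}
\]
in which, by the statement of that lemma (using that $g$ preserves pieces), both horizontal maps are isos; the bottom one uses in addition that $\varrho$ and $\lambda$ are invertible, by Lemma~\ref{LemIndexedCoprosAndPreservationOfPieces} and Proposition~\ref{PropObviousCharOfGpp}.

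Next, since $pg = f$, we have $g^*(p^*A) = f^*A$, so the upper-right corner of the square is $f_!(K^{f^*A})$ and the right vertical map becomes $\okappa = \kappa^{f_!}_{f^*A,\, K} : f_!(K^{f^*A}) \rightarrow (f_!K)^{f_!(f^*A)}$. The hypothesis that $K$ is weakly Kan in $\calF$ says exactly that this $\okappa$ is an iso. A two-out-of-three argument on the commutative square then forces the left vertical map $\kappa$ to be an iso, which is what we needed.

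This is the whole proof: the substantive content has been packaged in Lemma~\ref{LemKappaOkappa}, and the only thing to verify is that the correct specialization ($E = p^*A$, $F = K$) converts the right-hand side of that lemma into the canonical map for the definition of weakly Kan applied to $f^*A$ in $\calF$. There is no real obstacle; the only mild subtlety is to keep track of the identification $g^*p^* = f^*$ so that the hypothesis on $K$ is visibly what appears in the diagram.
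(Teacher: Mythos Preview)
Your proof is correct and follows essentially the same approach as the paper: both specialize Lemma~\ref{LemKappaOkappa} to $E = p^*A$, identify $g^*p^*A = f^*A$, and use a two-out-of-three argument on the resulting square to deduce that the left vertical $\kappa$ is an iso from the fact that the right vertical $\okappa$ is (by the weakly Kan hypothesis on $K$). There is no substantive difference.
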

\begin{proof}
Let $F$ be a weakly Kan object  in $\calF$.
To prove that ${g_* F}$ is weakly Kan in $\calE$ let $A$ in $\calS$ and take ${E = p^* A}$ in Lemma~\ref{LemKappaOkappa} to obtain the following commutative diagram
$$\xymatrix{
p_!((g_* F)^{p^* S}) \ar[d]_-{\kappa} \ar[r]^-{\rho}  & f_! (g^* ((g_* F)^{p^* S})) \ar[rr]^-{f_! \gamma} & & f_!(F^{g^* {p^* S}}) \ar[d]^-{\okappa} \ar[r]^-{=} & f_!(F^{f^* S}) \ar[d]^-{\okappa} \\
(p_!(g_* F))^{p_! {p^* S}} \ar[rrr]_-{\lambda^{\varrho}} &  & & (f_! F)^{f_!(g^* {p^* S})} \ar[r]^-{=} & (f_! F)^{f_!(f^*  S)}
}$$
with iso horizontal maps. Since the right vertical map is an iso then so is the left vertical map, and this means that ${g_* F}$ is weakly Kan.
\end{proof}

   We do not know a simple sufficient condition for ${g^*:\calE \rightarrow \calF}$ to preserve weakly Kan objects; but notice that, in our examples, this is typically the case for the trivial reason that every object in $\calF$ is weakly Kan. In this case, we have a composite functor
$$\xymatrix{
\calE \ar[r]^-{g^*} & \calF \ar[r]^{g_*} & \wk\calE
}$$
that preserves finite products.

\section{The passage to homotopy}
\label{SecHomotopy}

   In Chapter~{IV} of \cite{GabrielZisman}, Grabriel and Zisman introduce the category of simplicial sets ``modulo homotopy" (denoted by ${\overline{\Delta^{\circ}\calE}}$) equipped with a calculus of fractions given by the {\em anodyne extensions}.  The associated category of fractions is called the {\em homotopy category} and it is denoted by $\calH$. It is then proved that the canonical ${\overline{\Delta^{\circ}\calE} \rightarrow \calH}$ has a right adjoint ${\calH \rightarrow \overline{\Delta^{\circ}\calE}}$ which induces an equivalence between $\calH$ and the category of Kan complexes modulo homotopy (see {IV.3.2.1} loc.\ cit.).

   In this section we show that, except for the calculus of fractions, a similar picture appears in the context given by a pieces-preserving geometric morphism from a cohesive topos to a pre-cohesive one.
   It will be convenient for the reader to be acquainted with the rudiments of enriched category theory \cite{kelly}. We recall here some of the basic facts that we need in beginning.

   As explained after B2.1.7 in \cite{elephant}, a product preserving functor ${F:\calV \rightarrow \calW}$ induces a 2-functor ${F_{\bullet}:\enrichedIn{\calV} \rightarrow \enrichedIn{\calW}}$, 
where $\enrichedIn{\calV}$ and $\enrichedIn{\calW}$ are the 2-categories of $\calV$-enriched and $\calW$-enriched categories respectively. For any $\calV$-enriched category $\calC$, ${F_{\bullet} \calC}$ has the same objects as $\calC$ but, for ${C_0}$ and ${C_1}$ in $\calC$, ${(F_{\bullet} \calC)(C_0, C_1) = F(\calC(C_0, C_1))}$, and composition and identities are obtained by applying $F$ to those of $\calC$. It is relevant to stress that $\calC$ and ${F_{\bullet}\calC}$ do not in general have the same underlying (ordinary) category.

   The comparison maps ${\kappa_{U, V}:F(V^U) \rightarrow (F V)^{(F U)}}$ give rise to a $\calW$-functor ${\breve{F}:F_{\bullet} \calV \rightarrow \calW}$.

\begin{lemma}\label{LemMenniFolklore}
If ${F:\calV \rightarrow \calW}$ has a full and faithful right adjoint then ${\breve{F}:F_{\bullet} \calV \rightarrow \calW}$ has a fully faithful $\calW$-enriched right adjoint.
\end{lemma}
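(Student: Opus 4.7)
The plan is to construct the right adjoint $G: \calW \rightarrow F_\bullet \calV$ by first identifying what the hom-iso of the enriched adjunction must be, then reading off the $\calW$-functor structure on $G$, and finally verifying full faithfulness.

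Let $U: \calW \rightarrow \calV$ denote the fully faithful right adjoint of $F$, and write $\eta$, $\epsilon$ for unit and counit of $F \dashv U$; full faithfulness of $U$ says $\epsilon$ is iso. Set $G W := U W$ on objects. The key step is to produce an iso $W^{FV} \cong F((UW)^V)$ in $\calW$, natural in $V \in F_\bullet \calV$ and $W \in \calW$. Since $U$ preserves finite products (being a right adjoint), there is a canonical comparison $\mu_{W_1, W_2}: U(W_2^{W_1}) \rightarrow (UW_2)^{UW_1}$ in $\calV$, the transpose of $U(W_2^{W_1}) \times UW_1 \cong U(W_2^{W_1} \times W_1) \xrightarrow{U(\mathrm{ev})} UW_2$. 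A Yoneda argument shows $\mu$ is in fact an iso, since for any $V \in \calV$,
\begin{align*}
\calV(V, (UW_2)^{UW_1}) &\cong \calW(FV \times FUW_1, W_2) \cong \calW(FV \times W_1, W_2) \\
&\cong \calW(FV, W_2^{W_1}) \cong \calV(V, U(W_2^{W_1})),
\end{align*}
using $F \dashv U$, product-preservation by $F$, and $\epsilon_{W_1}$ iso. An analogous Yoneda calculation produces a natural iso $U(W^{FV}) \cong (UW)^V$ in $\calV$; applying $F$ and pre-composing with $\epsilon^{-1}$ yields the desired iso $W^{FV} \cong F((UW)^V)$ in $\calW$.

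This hom-iso both defines $G$ on enriched homs (via the adjoint transpose of the identity, so that explicitly $G_{W_1, W_2}$ equals $F(\mu_{W_1, W_2}) \circ \epsilon_{W_2^{W_1}}^{-1}$) and establishes the $\calW$-enriched adjunction $\breve{F} \dashv G$. Full faithfulness of $G$ is precisely the assertion that $G_{W_1, W_2}$ is iso, which is immediate since both $\epsilon^{-1}$ and $F(\mu)$ are. I expect the main obstacle to be verifying that the constructed hom-iso is $\calW$-natural in the enriched sense, i.e., compatible with the composition laws of $F_\bullet \calV$ and $\calW$; this reduces to a diagram chase combining the naturality of $\mu$, $\epsilon$, and the comparison maps $\kappa$, which goes through without incident but requires care.
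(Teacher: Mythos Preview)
Your proposal is correct and follows essentially the same approach as the paper: the paper names the right adjoint $R$ (your $U$), defines $\breve{R}_{W,X}$ as the composite $X^W \xrightarrow{\tau^{-1}} F(R(X^W)) \xrightarrow{F(\kappa^R)} F((RX)^{RW})$ (your $F(\mu)\circ\epsilon^{-1}$), cites the Elephant for invertibility of $\kappa^R$ where you give a Yoneda argument, and writes out the hom-iso $W^{FV}\cong F((RW)^V)$ as the explicit composite $F((RW)^{\sigma})\circ F(\kappa^R_{FV,W})\circ\tau^{-1}$ rather than invoking Yoneda again. The only substantive difference is presentational.
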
 
\begin{proof}
Let ${R:\calW \rightarrow \calV}$ be the right adjoint to $F$   (with unit $\sigma$ and counit $\tau$). The right adjoint to $\breve{F}$ will be denoted by ${\breve{R}:\calW \rightarrow F_{\bullet} \calV}$. On objects it is simply $R$, whereas for ${W, X}$ in $\calW$, ${\breve{R}_{W, X}:\calW(W, X) \rightarrow (F_{\bullet} \calV)(\breve{R} W, \breve{R} X) = F(\calV(R W, R X))}$ is the map ${X^W \rightarrow F((R X)^{(R W)})}$ given by the composite
$$\xymatrix{
X^W \ar[r]^-{\tau^{-1}} & F (R (X^W)) \ar[rr]^-{F(\kappa^R_{W, X})} && F((R X)^{(R W)})
}$$ 
in $\calW$. By Corollary~{A1.5.9} in \cite{elephant}, ${\kappa^R_{W, X}:R (X^W) \rightarrow (R X)^{(R W)}}$ is an iso so ${\breve{R}:\calW \rightarrow F_{\bullet} \calV}$ is fully faithful as a $\calW$-functor. The $\calW$-natural iso ${\calW(\breve{F} V, W) \cong (F_{\bullet} \calV)(V, \breve{R} W)}$ is determined by the iso
$$\xymatrix{
W^{F V} \ar[r]^-{\tau^{-1}} & F(R(W^{F V})) \ar[rr]^-{F(\kappa^R_{F V, W})} && F((R W)^{R(F V)}) \ar[rr]^-{F((R W)^{\sigma})} && F((R W)^V)
}$$
with inverse 
$$\xymatrix{
F((R W)^V) \ar[r]^-{\kappa} & (F ( R W))^{F V} \ar[r]^-{\tau^{F V}} & W^{F V}
}$$
in $\calW$.
\end{proof}

\subsection{The Hurewicz category of a (pre-)cohesive topos}
\label{SecHurewicz}

   Paraphrasing the text following Axiom 1 in \cite{Lawvere86}, the requirement that ${p_!:\calE \rightarrow \calS}$ preserves finite products is necessary for the naive construction of the homotopic passage from quantity to quality; namely, it insures that (not only $p_*$ but also) ${p_!:\calE \rightarrow \calS}$ is a closed functor, thus inducing a second way of associating an $\calS$-enriched category to each $\calE$-enriched category
$$\xymatrix{
\enrichedIn{\calE} \ar[rr]<+.5ex>^-{(p_!)_{\bullet}} \ar[rr]<-.5ex>_-{(p_*)_{\bullet}} & & \enrichedIn{\calS}
}$$
that we will denote by ${\bfh = (p_!)_{\bullet} : \enrichedIn{\calE} \rightarrow \enrichedIn{\calS}}$.
So, for example, $\calE$ itself as an $\calE$-enriched category gives rise to the {\em Hurewicz category} ${\bfh\calE = (p_!)_{\bullet}\calE}$. Its objects are those of $\calE$, and for each $X$, $Y$ in ${\bfh\calE}$, ${(\bfh\calE)(X, Y) = p_!(Y^X)}$. 
   
   As suggested by intuition, in the cases of main interest ${\calE}$ and ${\bfh\calE}$ will not have the same underlying (ordinary) category. The functor ${\breve{p_!}:\bfh\calE \rightarrow \calS}$ sends a `homotopy type' to the associated `set' of pieces. Moreover, the adjunction ${p_! \dashv p^*:\calS \rightarrow \calE}$ satisfies the hypotheses of Lemma~\ref{LemMenniFolklore} so it induces a $\calS$-enriched adjunction ${\breve{p_!}\dashv \breve{p^*}}$ with fully faithful ${\breve{p^*}:\calS \rightarrow \bfh{\calE}}$.
   In other words, as expected, the homotopy type of a discrete space is discrete.

   On the other hand, ${(p_*)_{\bullet}\calE}$ is just the canonical $\calS$-enrichment of $\calE$ given by the geometric morphism $p$, so we may denote it by $\calE$. The composite
$$\xymatrix{
((p_*)_{\bullet}\calE)(X, Y) = p_*(Y^X) \ar[r]^-{\theta} & p_!(Y^X) \ar[r]^-{\kappa} & (p_! Y)^{(p_! X)} 
}$$
underlies an $\calS$-enriched functor ${p_!:(p_*)_{\bullet}\calE \rightarrow \calS}$, with $\calS$ considered as enriched in itself.  
   
  Now, the natural transformation ${\theta:p_* \rightarrow p_!}$ induces an $\calS$-functor ${\calE \rightarrow \bfh\calE}$. Intuitively, it assigns to each space $X$ in $\calE$ the associated `homotopy type'. 
   Moreover, the diagram on the left below commutes
$$\xymatrix{
(p_*)_{\bullet} \calE \ar[rd]_-{p_!} \ar[r] & (p_!)_{\bullet} \calE \ar[d]^-{\breve{p_!}} && \calE \ar[rd]_-{p_!} \ar[r]& \bfh\calE \ar[d]^-{\breve{p_!}} \\
 & \calS && & \calS
}$$
 in $\enrichedIn{\calS}$ or, in a friendlier notation, the diagram on the right above commutes.

\begin{lemma}\label{LemWKisQuintessential} The $\calS$-adjunction ${\breve{p_!} \dashv \breve{p^*}:\calS \rightarrow \bfh\calE}$ restricts to one
${\breve{p_!} \dashv \breve{p^*}:\calS \rightarrow \bfh(\wk{\calE})}$ and, moreover, this restriction is  `quintessential' in the sense that the fully faithful ${\breve{p^*}:\calS \rightarrow \bfh(\wk{\calE})}$ is also left adjoint to ${\breve{p_!}:\bfh(\wk{\calE}) \rightarrow \calS}$.
\end{lemma}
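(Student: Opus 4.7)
The plan is as follows. First, the restriction. By Lemma~\ref{LemDiscreteImpliesWK}, $p^* A$ is weakly Kan for every $A$ in $\calS$, so the $\calS$-functor $\breve{p^*}:\calS\to\bfh\calE$ factors through the full $\calS$-subcategory $\bfh(\wk\calE)\hookrightarrow\bfh\calE$. Since this inclusion is a full $\calS$-subcategory, the $\calS$-natural hom-iso furnishing $\breve{p_!}\dashv\breve{p^*}$ restricts without change, yielding the desired $\calS$-adjunction $\breve{p_!}\dashv\breve{p^*}:\calS\to\bfh(\wk\calE)$.

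For the quintessential statement, I want to exhibit $\breve{p^*}:\calS\to\bfh(\wk\calE)$ as a \emph{left} adjoint to $\breve{p_!}:\bfh(\wk\calE)\to\calS$. Concretely, this amounts to an $\calS$-natural isomorphism $\bfh(\wk\calE)(p^*A,X)\cong\calS(A,\breve{p_!}X)$ for $A$ in $\calS$ and $X$ in $\wk\calE$. Unfolding the hom-objects, this asks for
\[
p_!(X^{p^*A})\cong (p_!X)^A,
\]
which is precisely the defining property of a weakly Kan object $X$: the required iso is the composite displayed in Lemma~\ref{LemDefWeaklyKanNew}, namely
\[
p_!(X^{p^*A})\xrightarrow{\kappa^{p_!}_{p^*A,X}}(p_!X)^{p_!(p^*A)}\xrightarrow{(p_!X)^{\tau_A^{-1}}}(p_!X)^A.
\]

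It remains to check that this hom-iso is $\calS$-natural in $A$ and $X$ and compatible with identities and composition of $\breve{p^*}$ and $\breve{p_!}$. Both conditions are routine and follow from the naturality of $\kappa^{p_!}$ and $\tau$, together with the coherence square (displayed in Section~\ref{SecPreCohesive}) relating $\kappa^{G F}$ to $\kappa^F$ and $\kappa^G$. Equivalently, one can exhibit the adjunction data directly: the unit at $A$ is $\tau_A^{-1}\pts A\to p_!(p^*A)$, while the counit at a weakly Kan $X$ is the unique morphism $p^*(p_!X)\to X$ in $\bfh(\wk\calE)$ corresponding, under the weakly Kan iso $p_!(X^{p^*(p_!X)})\cong (p_!X)^{p_!X}$, to $\mathrm{id}_{p_!X}$. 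The triangle identities then reduce to statements that unfold to the weakly Kan condition for $X$ and to $\tau\sigma=\mathrm{id}$ on discretes. The main (mild) obstacle is the coherence bookkeeping in the enriched setting; the heart of the argument — the existence of the hom-iso — is an immediate reading of Definition~\ref{DefWeaklyKan}.
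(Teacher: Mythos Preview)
Your proposal is correct and follows essentially the same approach as the paper: the heart of the argument in both is the observation that the required hom-iso $(\bfh(\wk\calE))(p^*A,X)=p_!(X^{p^*A})\to(p_!X)^A=\calS(A,\breve{p_!}X)$ is precisely the defining weakly Kan condition. Your treatment is more detailed than the paper's (which is a one-line proof), in that you explicitly justify the restriction step via Lemma~\ref{LemDiscreteImpliesWK} and sketch the enriched coherence verification, whereas the paper leaves both implicit.
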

\begin{proof}
The fact that ${\breve{p^*}:\calS \rightarrow \bfh(\wk{\calE})}$ is also left adjoint to ${\breve{p_!}:\bfh(\wk{\calE}) \rightarrow \calS}$ follows because  the canonical 
\[ (\bfh(\wk{\calE}))(p^* A, X) = p_!(X^{p^* A})  \rightarrow (p_! X)^A = \calS(A, p_! X) \]
is an iso for every weakly Kan object $X$ and $A$ in $\calS$.
\end{proof}

   In particular, if ${p:\calE \rightarrow \calS}$ is cohesive (i.e.\ satisfies Continuity) then the $\calS$-adjunction ${\breve{p_!} \dashv \breve{p^*}:\calS \rightarrow \bfh\calE}$ is `quintessential' in the enriched sense suggested by Lemma~\ref{LemWKisQuintessential}. This is the case considered in Theorem~1 of \cite{Lawvere07}, where the canonical ${\calE \rightarrow \bfh\calE}$ is called an {\em extensive quality}. Let us briefly discuss the terminology.
   
   Johnstone defines a quintessential localization as a string of adjoints ${I \dashv Q \dashv I:\calX \rightarrow \calY}$ with fully faithful ${I:\calX \rightarrow \calY}$.
   For convenience, we will say that ${Q:\calY \rightarrow \calX}$ is a quintessential localization. Notice that if ${p:\calE \rightarrow \calS}$ is a quality type (Section~\ref{SecPreCohesive}) then ${p_*:\calE\rightarrow \calS}$ is quintessential.
   
     Lawvere defines  `qualities' as certain functors which have, as domain, a cohesive category over a base $\calX$ and, as codomain, a quality type over the same base. In fact, Lawvere emphasizes two kinds of qualities that we recall below.
   
    An {\em intensive quality} on the pre-cohesive ${p:\calE \rightarrow \calS}$ is a functor ${s:\calE \rightarrow \calL}$ where ${q:\calL \rightarrow \calS}$ is a quality type, and satisfying that  ${s:\calE \rightarrow \calL}$ preserves finite products and finite coproducts and the following diagram 
$$\xymatrix{
\calE \ar[rd]_-{p_*} \ar[r]^-s & \calL \ar[d]^-q \\
 & \calS
}$$   
commutes. See Definition~4 in \cite{Lawvere07} and also Theorem~2 loc.\ cit..
   
    An {\em extensive quality} on the pre-cohesive ${p:\calE \rightarrow \calS}$ is a finite-coproduct preserving functor ${h:\calE \rightarrow \calL}$ where ${q:\calL \rightarrow \calS}$ is a quality type and such that the following diagram 
$$\xymatrix{
\calE \ar[rd]_-{p_!} \ar[r]^-s & \calL \ar[d]^-q \\
 & \calS
}$$   
commutes.

   One may interpret the definition of extensive quality in the context of $\calS$-enriched categories and it is in this sense that Lemma~\ref{LemWKisQuintessential} says that ${\calE \rightarrow \bfh(\wk\calE)}$ is an extensive quality. We insist that this is a refinement of Theorem~1 in \cite{Lawvere07} to a context a pre-cohesive ${p:\calE \rightarrow \calS}$ where Continuity need not hold.

   To summarize let us assume  that $\calE$ is a topos equipped with a sufficiently cohesive pre-cohesive geometric morphism ${p:\calE \rightarrow \calS}$ to another topos $\calS$. If the Continuity condition holds then we have a canonical `extensive quality' ${\calE \rightarrow \bfh\calE}$ (enriched over $\calS$) which assigns to each object in $\calE$ the associated `homotopy type' in the Hurewicz category of $\calE$. If Continuity does not hold then the situation is more subtle. We certainly have the $\calS$-functor ${\calE\rightarrow \bfh{\calE}}$ and the adjunction  ${\breve{p_!}\dashv \breve{p^*}:\calS \rightarrow \bfh\calE}$ but ${\breve{p_!}:\bfh\calE \rightarrow \calS}$ is not in general a quality type. On the other hand, we have a quality type ${\bfh(\wk\calE) \rightarrow \calS}$ but, as far as we can see, there is not an $\calS$-functor ${\calE \rightarrow \bfh(\wk\calE)}$ in general. In practice, though, there is evidence that the inclusion ${\bfh(\wk\calE) \rightarrow \bfh\calE}$ has a right adjoint. See Chapter~{IV.3} in \cite{GabrielZisman}.

\subsection{Preservation of pieces and Hurewicz categories}

    The purpose of this section is to prove that a geometric morphism that preserves pieces induces an enriched adjunction at the level of Hurewicz categories. Again, the idea of the proof may be sketched easily. For assume that we have  a geometric morphism ${g:\calF \rightarrow \calE}$ over a base topos $\calS$, from a pre-cohesive ${f:\calF \rightarrow \calS}$ to a pre-cohesive ${p:\calE \rightarrow \calS}$. If $g$ preserves pieces then the informal calculation
\[ (\bfh\calF)(g^* E, X) = f_!(X^{g^* E}) = p_! (g_*  (X^{g^* E}) ) = p_! ((g_* X)^E) = (\bfh\calE)(E, g_* X) \]
suggests that ${g:\calF\rightarrow \calE}$ may indeed induce an $\calS$-enriched adjunction 
${\bfh\calF \rightarrow \bfh\calE}$ between the associated Hurewicz categories.

   In order to give an actual proof it is better to start, as before, with a little bit of enriched category theory. Let $\calV$ and $\calW$ be cartesian closed categories and let ${F:\calV \rightarrow \calW}$ be a functor that preserves finite products.

\begin{lemma}\label{LemQuicoClaimsFolklore}
   If ${F:\calV \rightarrow \calW}$ has a finite-product preserving left adjoint then 
 ${\breve{F}:F_{\bullet} \calV \rightarrow \calW}$ has a $\calW$-enriched left adjoint.
\end{lemma}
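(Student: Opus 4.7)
The plan is to mimic the structure of the proof of Lemma~\ref{LemMenniFolklore}, with the roles of left and right adjoints interchanged. Write $L\dashv F$ with unit ${\eta:1_{\calW}\to FL}$ and counit ${\epsilon:LF\to 1_{\calV}}$. Since $L$ preserves finite products, the comparison map ${\kappa^L_{W,X}:L(X^W)\to (LX)^{LW}}$ in $\calV$ is an isomorphism (this is the left-adjoint analogue of the fact cited from Corollary~A1.5.9 in \cite{elephant}). Define ${\breve{L}:\calW\to F_{\bullet}\calV}$ on objects by ${\breve{L}W=LW}$ and, for $W, X$ in $\calW$, define the hom-map
\[
\breve{L}_{W, X} : X^W \xrightarrow{\eta_{X^W}} FL(X^W) \xrightarrow{F(\kappa^L_{W, X})} F((LX)^{LW}) = (F_{\bullet}\calV)(\breve{L}W,\breve{L}X).
\]

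Next I would verify that this data defines a $\calW$-enriched functor, i.e.\ that $\breve{L}$ preserves identities and composition. This is routine: it uses naturality of $\eta$, the fact that $L$ preserves products (so that $\kappa^L$ is compatible with the evaluation/composition structure in $\calV$), and the defining properties of $\kappa^L$ with respect to composition of exponentials. The computations are dual to, and essentially the same as, those implicit in the proof of Lemma~\ref{LemMenniFolklore}.

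To exhibit the $\calW$-enriched adjunction ${\breve{L}\dashv\breve{F}}$ I would construct a $\calW$-natural isomorphism of hom-objects ${(F_{\bullet}\calV)(\breve{L}W,A)=F(A^{LW})\cong (FA)^W=\calW(W,\breve{F}A)}$. Concretely, the map $\phi:F(A^{LW})\to (FA)^W$ is the exponential transpose in $\calW$ of
\[
F(A^{LW})\times W \xrightarrow{1\times\eta_W} F(A^{LW})\times FLW \xrightarrow{\cong} F(A^{LW}\times LW) \xrightarrow{F\operatorname{ev}} FA,
\]
where the middle isomorphism is the product-preservation of $F$. The inverse $\psi:(FA)^W\to F(A^{LW})$ is, by $L\dashv F$, determined by a map ${L((FA)^W)\to A^{LW}}$, which in turn is obtained from
\[
L((FA)^W)\times LW \xrightarrow{(\kappa^L)^{-1}} L((FA)^W\times W) \xrightarrow{L\operatorname{ev}} L(FA) \xrightarrow{\epsilon_A} A.
\]
That $\phi$ and $\psi$ are mutually inverse can be deduced either by direct diagram chase using the triangle identities for $L\dashv F$, or more painlessly via the Yoneda-style chain of natural bijections ${\calW(Y,F(A^{LW}))\cong\calV(LY,A^{LW})\cong\calV(LY\times LW,A)\cong\calV(L(Y\times W),A)\cong\calW(Y\times W,FA)\cong\calW(Y,(FA)^W)}$, which composes to $\calW(Y,\phi)$.

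The main obstacle is not the existence of this iso of hom-objects but the verification of $\calW$-enriched naturality, i.e.\ that $\phi$ commutes with the enriched functoriality of $\breve{L}$ on one side and $\breve{F}$ on the other, so that it is the hom-component of an actual $\calW$-enriched adjunction. This amounts to a moderately lengthy but formal coherence chase, relying on naturality of $\eta$ and $\epsilon$, the interaction of $\kappa^L$ and $\kappa^F$ with composition and evaluation, and the fact that both $F$ and $L$ preserve finite products so that the canonical coherence square for ${G F}$ displayed in the paper specializes to the identities needed here. Once this is checked, $\breve{L}$ is the desired $\calW$-enriched left adjoint to $\breve{F}$.
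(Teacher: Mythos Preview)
Your overall strategy coincides with the paper's: define $\breve{L}$ on objects by $L$ and on hom-objects via ${X^W \xrightarrow{\eta} F L(X^W) \xrightarrow{F\kappa^L} F((LX)^{LW})}$, then exhibit the $\calW$-natural isomorphism ${F(A^{LW}) \cong (FA)^W}$ (the paper writes it as ${\kappa^F_{LW,A}}$ followed by ${(FA)^{\eta}}$, which is exactly your $\phi$). The paper additionally records the unit and counit explicitly, but your hom-iso route is the one it ultimately appeals to via \cite[1.11]{kelly}, so there is no real divergence.

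There is, however, one genuine error in your write-up. You assert that ${\kappa^L_{W,X}:L(X^W)\to (LX)^{LW}}$ is an isomorphism and call this ``the left-adjoint analogue'' of Corollary~A1.5.9 in \cite{elephant}. This is false, and in fact a central theme of the present paper is precisely that a finite-product-preserving \emph{left} adjoint need not have invertible $\kappa$: the functor ${p_!:\calE\to\calS}$ is such a left adjoint, and the whole point of Definition~\ref{DefWeaklyKan} and the Continuity condition is that ${\kappa^{p_!}}$ is generally \emph{not} an isomorphism. The cited corollary uses the existence of a left adjoint to the functor in question; there is no symmetric statement for left adjoints. Fortunately you never actually use invertibility of the exponential comparison $\kappa^L$ in your construction: the hom-map of $\breve L$ only needs $\kappa^L$ as a map, and what you label ``$(\kappa^L)^{-1}$'' in the definition of $\psi$ is really the inverse of the \emph{product} comparison ${L(X\times Y)\xrightarrow{\cong} LX\times LY}$ (which is indeed iso since $L$ preserves finite products), not of the exponential comparison. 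So the argument survives once you delete the false parenthetical and fix the notation; just be aware that $\breve{L}$ is \emph{not} fully faithful in general, unlike $\breve{R}$ in Lemma~\ref{LemMenniFolklore}.
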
 
\begin{proof}
Let ${L:\calW \rightarrow \calV}$ be the left adjoint to $F$   (with unit $\nu$ and counit $\xi$). The left adjoint to $\breve{F}$ will be denoted by ${\breve{L}:\calW \rightarrow F_{\bullet} \calV}$. On objects it is simply $L$, whereas for $W$, $X$ in $\calW$, ${\breve{L}_{W, X}:\calW(W, X) \rightarrow (F_{\bullet} \calV)(\breve{L} W, \breve{L} X) = F(\calV(L W, L X))}$ is the map ${X^W \rightarrow F((L X)^{(L W)})}$ given by the composite
$$\xymatrix{
X^W \ar[r]^-{\nu} & F (L (X^W)) \ar[rr]^-{F(\kappa^L_{W, X})} && F((L X)^{(L W)})
}$$ 
in $\calW$. The unit of ${\breve{L}\dashv \breve{F}}$ is given by the family of maps ${\ulcorner\nu_{W}\urcorner: 1 \rightarrow (F (L W))^W}$ indexed by ${W \in \calW}$, where ${\ulcorner\nu_{W}\urcorner}$ is the exponential transpose of ${\nu_W:W \rightarrow F(L W)}$.
The counit is the family of maps
$$\xymatrix{1 \ar[r]^-{\cong} & F 1 \ar[rr]^-{F \ulcorner \xi_U \urcorner} && F(U^{L (F U)})}$$
where ${\ulcorner \xi_U \urcorner:1 \rightarrow U^{L (F U)}}$ is the exponential transpose of $\xi_U$.

It is well known (see 1.11 in \cite{kelly} for instance)
that  an adjunction as in the statement is equivalent to having an $\calE$-natural isomorphism
${F(U^{(L W)}) \rightarrow (F U)^W}$, and it is not difficult to see that the above isomorphism, in this case, is the composite
$$\xymatrix{
F(U^{(L W)}) \ar[rr]^-{\kappa^F_{L W, U}} && (F U)^{F(L W)} \ar[rr]^-{(F U)^{\nu}} && (F U)^W
}$$
with inverse
$$\xymatrix{
(F U)^W \ar[r]^-{\nu} & F (L ((F U)^W)) \ar[rr]^-{F (\kappa^L_{W, F U})} && F((L(F U))^{(L W)})
\ar[rr]^-{F(\xi^{(L W)})} && F(U^{(L W)})
}$$
in $\calW$.
\end{proof}

   Assume now that $\calS$, $\calF$ and $\calE$ are toposes, and that ${f:\calF \rightarrow \calS}$, ${p:\calE \rightarrow \calS}$ and ${g:\calF \rightarrow \calE}$ are geometric morphisms such that ${p g = f:\calF \rightarrow \calS}$. Assume further that $f$ and $p$ are pre-cohesive.

\begin{proposition}
If ${g:\calF \rightarrow \calE}$ preserves pieces then it induces  an $\calS$-enriched adjunction 
\[
\xy
(0,0)*+{\bfh\calF}="1";
(25,0)*+{\bfh\calE}="2";
\POS "1" \ar@<-.2cm>_-{\breve{g_*}}^-\bot "2",
\POS "2" \ar@<-.15cm>_-{\breve{g^*}} "1",
\endxy
\]
between the Hurewicz categories determined by $f\pts\calF\to \calS$ and $p\pts\calE\to\calS$.
If, moreover, ${f:\calF \rightarrow \calS}$ is cohesive then this adjunction restricts to one
\[
\xy
(0,0)*+{\bfh\calF}="1";
(25,0)*+{\bfh(\wk\calE)}="2";
\POS "1" \ar@<-.2cm>_-{\breve{g_*}}^-\bot "2",
\POS "2" \ar@<-.15cm>_-{\breve{g^*}} "1",
\endxy
\]
in $\enrichedIn{\calS}$.
\end{proposition}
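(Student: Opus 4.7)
The plan is to build the desired adjunction in two stages: first apply Lemma~\ref{LemQuicoClaimsFolklore} to obtain an $\calE$-enriched adjunction, then transport it along the 2-functor $(p_!)_{\bullet}: \enrichedIn{\calE} \to \enrichedIn{\calS}$, using preservation of pieces to identify the transported source with $\bfh\calF$. For the first stage I would apply Lemma~\ref{LemQuicoClaimsFolklore} with $F = g_*: \calF \to \calE$; since its left adjoint $g^*$ preserves finite products, the lemma yields an $\calE$-enriched adjunction ${\breve{g^*} \dashv \breve{g_*}: (g_*)_{\bullet}\calF \to \calE}$ whose right adjoint acts as $g_*$ on objects (with hom-component the canonical $\kappa^{g_*}$) and whose left adjoint acts as $g^*$ on objects.

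For the second stage, apply the 2-functor $(p_!)_{\bullet}$: since 2-functors preserve adjunctions, we obtain an $\calS$-enriched adjunction between $(p_!)_{\bullet}((g_*)_{\bullet}\calF)$ and $\bfh\calE = (p_!)_{\bullet}\calE$. Because both $p_!$ and $g_*$ preserve finite products, $(p_!)_{\bullet} \circ (g_*)_{\bullet} = (p_! g_*)_{\bullet}$, and pieces preservation (Proposition~\ref{PropObviousCharOfGpp}) supplies a natural iso ${\lambda: p_! g_* \cong f_!}$ of finite-product-preserving functors. Any such iso is automatically monoidal by uniqueness of the canonical product comparisons, hence it induces an iso of 2-functors $(p_! g_*)_{\bullet} \cong (f_!)_{\bullet}$, identity on objects. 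Evaluating at $\calF$ produces an iso $(p_!)_{\bullet}((g_*)_{\bullet}\calF) \cong \bfh\calF$ of $\calS$-enriched categories, and transporting the $\calS$-enriched adjunction along this iso yields the desired ${\breve{g^*} \dashv \breve{g_*}: \bfh\calF \to \bfh\calE}$. As a sanity check, the hom-component of $\breve{g_*}$ becomes, after the identification, the composite $f_!(Y^X) \xrightarrow{\lambda^{-1}} p_!(g_*(Y^X)) \xrightarrow{p_!\kappa^{g_*}} p_!((g_*Y)^{g_*X})$, which is exactly the map suggested by the informal calculation $(\bfh\calF)(g^*E,X) \cong (\bfh\calE)(E,g_*X)$.

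For the restriction when $f$ is cohesive, every object of $\calF$ is weakly Kan, so Proposition~\ref{PropDirectImagePreservesWK} gives $g_* X \in \wk{\calE}$ for every $X$ in $\calF$. Hence $\breve{g_*}$ factors through the inclusion $\bfh(\wk{\calE}) \hookrightarrow \bfh\calE$ on objects; since $\wk{\calE} \hookrightarrow \calE$ is full, the induced $\calS$-enriched inclusion $\bfh(\wk{\calE}) \hookrightarrow \bfh\calE$ is fully faithful, and the adjunction restricts automatically: the factored $\calS$-functor $\bfh\calF \to \bfh(\wk{\calE})$ is right adjoint to the composite $\bfh(\wk{\calE}) \hookrightarrow \bfh\calE \xrightarrow{\breve{g^*}} \bfh\calF$. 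The main obstacle I anticipate is the bookkeeping in transporting the enriched adjunction through $(p_!)_{\bullet}$ and identifying the transported source with $\bfh\calF$ — in particular, spelling out why the monoidality of $\lambda$ is what is needed for the 2-functor isomorphism and confirming that the unit and counit of the transported adjunction are the natural ones one would expect.
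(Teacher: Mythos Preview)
Your proposal is correct and follows essentially the same route as the paper: apply Lemma~\ref{LemQuicoClaimsFolklore} to the adjunction $g^*\dashv g_*$ to obtain an $\calE$-enriched adjunction $(g_*)_{\bullet}\calF \leftrightarrows \calE$, push it through the 2-functor $(p_!)_{\bullet}$, and then use the iso $\lambda:p_!g_*\cong f_!$ to identify $(p_!)_{\bullet}((g_*)_{\bullet}\calF)$ with $\bfh\calF$; the restriction to $\bfh(\wk\calE)$ follows from Proposition~\ref{PropDirectImagePreservesWK}. Your extra care about the monoidality of $\lambda$ and the transport of the unit and counit is exactly the bookkeeping the paper leaves implicit.
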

\begin{proof}
 By Lemma~\ref{LemQuicoClaimsFolklore} we obtain an $\calE$-enriched adjunction
\[
\xy
(0,0)*+{(g_*)_\bullet(\calF)}="1";
(25,0)*+{\calE}="2";
\POS "1" \ar@<-.2cm>_-{\breve{g_*}}^-\bot "2",
\POS "2" \ar@<-.15cm>_-{\breve{g^*}} "1",
\endxy
\]
so if we now apply $(p_!)_\bullet\pts \calE{\textup{-\textbf{Cat}}}\to\calS{\textup{-\textbf{Cat}}}$ and use the iso
$\lambda\pts p_!g_*\to f_!$, we obtain the $\calS$-enriched adjunction
\[
\xy
(0,0)*+{(f_!)_\bullet \calF\simeq(p_!g_*)_\bullet(\calF)=(p_!)_\bullet((g_*)_\bullet \calF)}="1";
(60,0)*+{(p_!)_\bullet\calE}="2";
\POS "1" \ar@<-.2cm>_-{\breve{g_*}}^-\bot "2",
\POS "2" \ar@<-.15cm>_-{\breve{g^*}} "1",
\endxy
\]
in the statement.
   If ${f:\calF \rightarrow \calS}$ is cohesive then $g$ restricts to an adjunction ${g^* \dashv g_*:\calF \rightarrow \wk\calE}$ by Proposition~\ref{PropDirectImagePreservesWK}.
\end{proof}

   It is worth mentioning that we have not used the fact that ${g^*:\calE \rightarrow \calF}$ preserves equalizers. So we may consider pieces-preserving adjunctions whose left adjoints only preserves finite products. For example, if we let $q$ be the adjunction ${p_!\dashv p^*:\calS \rightarrow \calE}$ then $q$ preserves pieces when considered as below
\[
\xymatrix{
\calS \ar[rd]_-1 \ar[r]^-q & \calE \ar[d]^-p \\
 & \calS
}
\]
and we have the following corollary.

\begin{corollary}\label{fortameadjunction}
Assume that $p\pts\calE\to\calS$ satisfies the Nullstellensatz. There is an $\calS$-enriched adjunction
\[
\xy
(0,0)*+{\calS}="1";
(25,0)*+{\bfh(\calE)}="2";
\POS "1" \ar@<-.2cm>_-{\widehat{p^*}}^-\bot "2",
\POS "2" \ar@<-.15cm>_-{\widehat{p_!}} "1",
\endxy
\]
(which coincides with that at the beginning of Section~\ref{SecHurewicz}).
\end{corollary}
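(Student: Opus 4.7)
The plan is to deduce the corollary by applying the construction in the proof of the preceding proposition to the adjunction $q = (p_! \dashv p^*): \calS \to \calE$, viewed as sitting over the identity adjunction on $\calS$. Concretely, I take $g^* = q^* = p_!$, $g_* = q_* = p^*$, $f = \mathrm{id}_\calS$, and use the isos $\alpha: \mathrm{id} \to p_* p^*$ and $\tau: p_! p^* \to \mathrm{id}$ to witness $pq \cong \mathrm{id}_\calS$. As remarked immediately before the statement, the fact that $q^* = p_!$ fails in general to preserve equalizers is not an obstacle; we need only that $q^*$ preserves finite products, which holds by pre-cohesion of $p$.

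First I would verify that $q$ preserves pieces in the sense of Definition~\ref{DefGpp}. To see that $q_* = p^*$ preserves $\calS$-indexed coproducts, note that $\nu_{p^*} = \sigma_{p^*}: p^* \to p^* p_! p^*$ is invertible by the triangle identity for $p_! \dashv p^*$ together with the fact that $\tau: p_! p^* \to \mathrm{id}$ is an iso (since $p^*$ is fully faithful). Second, because $f_! = \mathrm{id}$, the canonical composite $p_! q_* \to f_! q^* q_* \to f_!$ of Definition~\ref{DefGpp} collapses to $\tau$, which is iso. Hence $q$ preserves pieces, with canonical witnessing iso $\lambda = \tau: p_! p^* \to \mathrm{id}$. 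The Nullstellensatz hypothesis on $p$ enters here via Proposition~\ref{cohlambda}: it guarantees that this $\lambda$ is uniquely characterized by its compatibility with $\theta$, which will be needed to pin down the comparison with Section~\ref{SecHurewicz}.

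Next I would apply Lemma~\ref{LemQuicoClaimsFolklore} to $F = p^*$ with product-preserving left adjoint $L = p_!$, obtaining an $\calE$-enriched adjunction $\breve{p_!} \dashv \breve{p^*}$ between $(p^*)_\bullet \calS$ and $\calE$. Applying the 2-functor $(p_!)_\bullet = \bfh$ and using $\lambda = \tau$ to identify $(p_! p^*)_\bullet \calS$ with $(\mathrm{id})_\bullet \calS = \calS$ produces the claimed $\calS$-enriched adjunction $\widehat{p_!} \dashv \widehat{p^*}$ between $\calS$ and $\bfh(\calE)$.

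Finally, to show that this coincides with the adjunction of Lemma~\ref{LemMenniFolklore} recalled at the beginning of Section~\ref{SecHurewicz}, I would trace the hom-component of the right adjoint: the $\widehat{p^*}$ produced by our route sends $B^A = \calS(A, B)$ to $\bfh\calE(p^* A, p^* B) = p_!((p^* B)^{p^* A})$ via the composite $B^A \xrightarrow{\tau^{-1}} p_!(p^*(B^A)) \xrightarrow{p_!(\kappa^{p^*})} p_!((p^* B)^{p^* A})$, which is exactly the formula from the proof of Lemma~\ref{LemMenniFolklore} specialized to $F = p_!$, $R = p^*$; the analogous check for $\widehat{p_!}$ follows from the explicit description of the counit in Lemma~\ref{LemQuicoClaimsFolklore}. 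The main obstacle is nothing deep, only the mild tedium of propagating the comparison maps $\kappa$ and the triangle identities through the 2-functoriality of $(p_!)_\bullet$ to confirm that everything lines up on the nose.
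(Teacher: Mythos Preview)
Your proposal is correct and follows exactly the route the paper intends: instantiate the preceding proposition with $g=q$, $g^*=p_!$, $g_*=p^*$, $\calF=\calS$, $f\cong\mathrm{id}_{\calS}$, after checking that $q$ preserves pieces, and then identify the resulting adjunction with the one from Lemma~\ref{LemMenniFolklore}. The paper's own argument is terse (the remark before the corollary simply asserts that $q$ preserves pieces and leaves the rest implicit), so your added detail---the verification via $\sigma_{p^*}$ and $\tau$, and the hom-level trace through $\kappa^{p^*}$---is exactly what a reader would supply. One small comment: your invocation of Proposition~\ref{cohlambda} for the role of the Nullstellensatz is not really needed for the argument to go through; the identification of $\lambda$ with $\tau$ and the coincidence with Section~\ref{SecHurewicz} can be checked directly by the trace you describe, without appealing to uniqueness. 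The Nullstellensatz here is effectively part of the standing pre-cohesive hypothesis on $p$ rather than an independent ingredient of the proof.
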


\section*{Acknowledgments} We thank Bill Lawvere for his help and encouragement. Both authors thank 
DGAPA-PAPIIT,  project No.\ IN110814, for financial support; in particular, this generous support  allowed the second 
author to visit the Instituto de Matem\'aticas UNAM in Mexico City, where an important part of the present work was done.
   We also want to thank the editor and referee for an efficient handling of the paper.  Finally, the second author wants to thank G.~Minian for responding to a number of bibliographic inquiries.

\bibliography{bibliopp}
\bibliographystyle{plain}

\end{document}